\newcommand{\Hilb}{\mathrm{Hilb}}
\newcommand{\ad}{\mathrm{ad}}
\newcommand{\supp}{\mathrm{Supp}}
\newtheorem{theorem}{Theorem}
\newtheorem{proposition}[theorem]{Proposition}
\newtheorem{lemma}[theorem]{Lemma}
\newtheorem{corollary}[theorem]{Corollary}
\theoremstyle{definition}
\newtheorem{definition}[theorem]{Definition}
\newtheorem{remark}[theorem]{Remark}
\newtheorem{claim}[theorem]{Claim}
\newtheorem{example}[theorem]{Example}
\newtheorem{conjecture}[theorem]{Conjecture}
\title{Wonderful Varieties: A geometrical realization}
\author[St\'ephanie Cupit-Foutou]{St\'ephanie Cupit-Foutou}
\address{Ruhr-Universit\"at Bochum \\ Fakult\"at f\"ur Mathematik \\
Raum NA 4/67\\
D-44780 Bochum\\ Tel. (++49) 234 32-23331\\ Fax (++49) 234 32-14498}
\email{stephanie.cupit@rub.de}
\thanks{This work was partially supported by the DFG Schwerpunktprogramm 1388 - Darstellungstheorie}
\begin{document}

\begin{abstract}
A geometrical realization of wonderful varieties by means of a suitably chosen class of invariant Hilbert schemes is given.
As a consequence, Luna's conjecture asserting that wonderful varieties are classified by combinatorial invariants, called spherical systems, is proved.
\end{abstract}

\bibliographystyle{amsalpha}

\maketitle

\tableofcontents

\section*{Introduction}

Wonderful varieties are smooth equivariant compactifications $X$ of spherical homogeneous spaces of a connected complex reductive algebraic group $G$.
A normal algebraic $G$-variety is spherical if it contains a dense orbit for a Borel subgroup of $G$.
If $X$ itself is $G$-homogeneous then it is a flag variety. If however, the boundary $D$ of the open $G$-orbit in $X$ is non-empty, then in order for $X$ to be wonderful, the $G$-variety $D$ must satisfy strong conditions (\textsl{see} Definition~\ref{defwonderful}).
These conditions were already transparent in the compactifications of symmetric spaces constructed and studied by DeConcini-Procesi in~\cite{DCP}.

Wonderful varieties are not only interesting in their own right, they also lend themselves to the study of the wider class of spherical varieties.
As for toric varieties, the equivariant embeddings of spherical $G$-homogeneous spaces (which are just tori in case of toric varieties) are classified by combinatorial objects, the so-called colored fans; \textsl{see}~\cite{LV,K91}.
This classification has been known since the early 80s. That concerning spherical $G$-homogeneous spaces has yet to be carried out.
Not all spherical $G$-homogeneous spaces can be compactified into a wonderful $G$-variety but whenever it is so, such a compactification is unique up to a $G$-isomorphism.
One of the most remarkable breakthroughs in the aforementioned classification problem was made by Luna (\cite{Lu01})
while proving that the classification of spherical homogeneous varieties reduces to that of wonderful varieties and proposing a framework to classify the latter.

Here, motivated by Luna's principle, we do indeed classify all wonderful varieties by verifying Luna's fundamental question (\textsl{see} below).

The unique closed $G$-orbit of a wonderful $G$-variety $X$, the weights w.r.t. a Borel subgroup $B$ of $G$ of the function field of $X$ as well as the $B$-stable prime divisors of $X$ are invariants of special interest; they are/yield some combinatorial invariants.
After Wasserman completed the classification of rank $2$ wonderful varieties (\cite{W}),
Luna highlighted in~\cite{Lu01} some properties enjoyed by the aforementioned combinatorial invariants of such varieties and took them as axioms to set up the definition of \textit{spherical systems of $G$}.
Further, Luna proved that any wonderful $G$-variety gave rise to a spherical system  of $G$.

Luna's conjecture asserts that for any spherical system there exists a unique wonderful variety producing this system.
It should be underlined that the uniqueness part of this conjecture can be derived from Losev's work (\cite{Lo}) for all groups $G$.
Further,  a number of special cases of this conjecture have been handled case by case using the procedure and initial work of Luna (\cite{Lu01}); \textit{see} \cite{BP, Bra,Lu07,BC2}.\footnote{While this paper was being revised, Bravi and Pezzini completed the existence part of this program in~\cite{BP14, BP11}.}
The approach followed therein is Lie theoretical: for a given spherical system of some group $G$, a subgroup $H$ of $G$ is exhibited and thereafter $G/H$ is proved to admit a wonderful compactification with the desired spherical system.

The approach adopted in this paper involves the invariant Hilbert schemes introduced by Alexeev and Brion in~\cite{AB}.
It not only leads to a proof of the complete conjecture (existence and uniqueness) but perhaps more importantly it gives a geometrical realization of the wonderful varieties at hand.

Besides the combinatorial invariants, let us mention the total coordinate ring (known also as the Cox ring) of a wonderful $G$-variety, an algebro-geometric invariant studied by Brion in~\cite{Br07}.
The structure of this ring gives insight to the aforementioned combinatorial invariants of a  wonderful variety.
Moreover, as shown in loc. cit.,
this ring is factorial and finitely generated; the spectrum
of this ring is the total space of a flat family of affine spherical $G$-varieties whose coordinate rings are isomorphic as $G$-modules.
As shown in~\cite{AB}, closed $G$-subschemes of a given finite dimensional $G$-module, whose coordinate ring has a prescribed structure of $G$-module
are parameterized by a quasi-projective scheme, the invariant Hilbert schemes; these schemes allow to prove several results concerning spherical varieties.
Connections between invariant Hilbert schemes and wonderful varieties were established in~\cite{Js,BC1};
the invariant Hilbert schemes considered there were proved to be affine spaces and the corresponding universal families
turn out to be the families occurring in Brion's work. These results were obtained by means of the
already known classification of wonderful varieties of rank $1$ (resp. of strict wonderful varieties) (\cite{A} and also~\cite{Br89-1}, resp. \cite{BC1}).

The problem of determining classification-free these invariant Hilbert schemes then arises naturally
(\textsl{see} also~\cite{Lu07} for related questions).
One of the main goals of this paper is to solve this problem for a class of invariant Hilbert schemes by deformation theoretic methods.
Once this is achieved, a construction of wonderful varieties is provided and  Luna's conjecture is proved.

We start by gathering basic material on wonderful varieties and spherical systems.
In the first section, we briefly recall the definition of wonderful varieties and of their invariants as well as some of their properties;
Luna's definition of spherical systems is stated in the second section.
For later purposes, to any spherical system of some given group $G$, we attach a set of characters 
$$
\lambda_D=(\omega_D,\chi_D) \quad\mbox{ indexed by a finite set $\Delta$.}
$$
The $\omega_D$'s are dominant weights of $G$ defined after~\cite{F} and the $\chi_D$'s are characters of some well-determined diagonalizable group $C$.
The characters $\omega_D$ (resp. $\chi_D$) encapsulate the first (resp. the third) datum of the spherical system under consideration.
We conclude the second section by recollecting how wonderful varieties and spherical systems are related and by stating explicitly Luna's conjecture.
Further, we give a geometrical interpretation of the characters $\lambda_D$.

The third section is devoted to definitions and results concerning invariant Hilbert schemes. 
In order to study later the geometry of these schemes, following~\cite{AB}, we give a description of their tangent spaces and define a toric action on these schemes.
Further, we  set up an obstruction theory for the functor of invariant infinitesimal deformations of the most degenerate point of a given invariant Hilbert scheme.
As an application, we obtain a smoothness criterion for the invariant Hilbert schemes under consideration (Corollary~\ref{Schlessinger}).

In the fourth section, we settle our main results.
We start by assigning an invariant Hilbert scheme to any given spherical system $\mathscr{S}$; this scheme is defined precisely up to the aforementioned weights $\lambda_D$ ($D\in \Delta$) associated to $\mathscr{S}$.
Many geometrical properties of wonderful varieties can be translated into combinatorial properties of their spherical systems and vice versa (\cite{Lu01}).
This provides a natural dictionary which in turn allows many reductions to prove Luna's conjecture.
Based on this fact, we consider only \emph{spherically closed} systems $\mathscr S$ (Definition~\ref{sphericallyclosed})
and we study the  geometry of the corresponding invariant Hilbert schemes or rather an open subsets $\Hilb(\mathscr{S})$ of them.
This study appeals to a precise description of the tangent space along with the aforementioned obstruction space and leads to the following theorem.

\begin{theorem}(Theorem~\ref{Hilb})
Let $\mathscr S$ be a spherically closed system of $G$.
The scheme $\Hilb(\mathscr{S})$ is isomorphic to an affine space where an adjoint maximal torus of $G$ acts linearly with
weights equal to the opposites of the spherical roots of $\mathscr{S}$.
\end{theorem}

Let thus $\mathbb A^r\simeq\mathrm{Hilb}(\mathscr{S})$.
Set
$$
\tilde G:=G\times C^\circ
$$
where $C^\circ$ denotes the identity-component of the aforementioned diagonalizable group $C$ associated to $\mathscr S$
and
$$
V=\oplus_{D\in\Delta} V(\lambda_D)^*
$$
where $V(\lambda_D)^*$ stands for the dual of the irreducible $\tilde G$-module with highest weight $\lambda_D$.

Consider the universal family of the functor represented by  $\mathrm{Hilb}(\mathscr{S})$
$$
\begin{CD}
V\times \mathbb A^r \supset\mathcal X^{\mathrm{univ}}  @>{\pi}>> \mathbb A^r
\end{CD}
$$
and let $\mathring{\mathcal X}^{\mathrm{univ}}$ be the open subset of $\mathcal X^\mathrm{univ}$ defined as follows
$$
\mathring{\mathcal X}^{\mathrm{univ}}=\{x\in\mathcal X^{\mathrm{univ}}: \tilde{G}.x \mbox{ is open in } \pi^{-1}\pi(x)\}.
$$
There is an action of the algebraic torus $\mathbb G_m^\Delta=GL(V)^{\tilde G}$ on $\mathcal X^{\mathrm{univ}}$.
This toric action stabilizes the set $\mathring{\mathcal X}^{\mathrm{univ}}$; \textsl{see} Section 5.2.

\begin{theorem}(Theorem~\ref{existence})
Let $\mathscr S$ be a spherically closed system of $G$.
The quotient
$$
X(\mathscr{S})=\mathring{\mathcal X}^{\mathrm{univ}}/ \mathbb G_m^\Delta
$$
exists and is a wonderful $G$-variety whose spherical system is the given $\mathscr{S}$.
Further its total coordinate ring is the coordinate ring of $\mathcal X^{\mathrm{univ}}$.
\end{theorem}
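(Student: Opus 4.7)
The plan is to realize $X(\mathcal S)$ as a principal $\mathbb G_m^\Delta$-bundle quotient of $\mathring{\mathcal X}^{\mathrm{univ}}$, inherit smoothness and projectivity from the toric structure on $\Hilb(\mathcal S) \cong \mathbb A^r$ proven in Theorem~\ref{Hilb}, and then match the spherical system of the quotient term-by-term to $\mathcal S$. First I would show that $\mathbb G_m^\Delta$ acts freely on $\mathring{\mathcal X}^{\mathrm{univ}}$: for $x$ in an open $\tilde G$-orbit of its fiber, each component $x_D \in V(\lambda_D)^*$ is nonzero by definition of $\Hilb(\mathcal S)$, so no nontrivial element of $\mathbb G_m^\Delta$ (which rescales the summands independently) can fix $x$. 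Since $\mathbb G_m^\Delta$ commutes with $\tilde G$, the geometric quotient $X(\mathcal S)$ exists, carries a $G$-action, and $\mathring{\mathcal X}^{\mathrm{univ}} \to X(\mathcal S)$ is a principal $\mathbb G_m^\Delta$-bundle.

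Next I would verify that $X(\mathcal S)$ is wonderful. Smoothness follows from smoothness of $\mathring{\mathcal X}^{\mathrm{univ}}$ (combining the affine-space structure of $\Hilb(\mathcal S)$ with flatness of the universal family and smoothness of the open $\tilde G$-orbit) together with the free torus quotient. For projectivity and the unique closed $G$-orbit, the weights of $\mathbb G_m^\Delta$ on $\Hilb(\mathcal S) = \mathbb A^r$ are the opposites of the spherical roots of $\mathcal S$ by Theorem~\ref{tangentspace}, and so lie in a common open half-space; hence the origin is the unique closed $\mathbb G_m^\Delta$-orbit on $\Hilb(\mathcal S)$, the map $X(\mathcal S) \to \Hilb(\mathcal S)/\!\!/\mathbb G_m^\Delta$ is proper with projective fibers over the single closed point, and the image of $\tilde G \cdot v_\Delta \subset X_0(\mathcal S)$ becomes the unique closed $G$-orbit of $X(\mathcal S)$. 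Sphericity is inherited: a Borel $B \subset G$ acts with an open orbit on the open locus of each fiber.

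I would then match the combinatorial invariants and identify the Cox ring. The colors of $X(\mathcal S)$ are identified with $\Delta$ via the divisors cut out by $\{x_D = 0\}$ in $\mathcal X^{\mathrm{univ}}$, and carry the characters $\lambda_D = (\omega_D, \chi_D)$ by construction, recovering the first and third data of $\mathcal S$; the $G$-stable prime divisors correspond to the $r$ coordinate hyperplanes of $\mathbb A^r$ and carry the spherical roots of $\mathcal S$ by Theorem~\ref{tangentspace}, giving the second datum. For the total coordinate ring statement, I would invoke Brion's characterization from~\cite{Br07}: $\mathcal O(\mathcal X^{\mathrm{univ}})$ is a finitely generated, factorial, $\tilde G \times \mathbb G_m^\Delta$-graded ring, the $\mathbb G_m^\Delta$-grading corresponds to $\mathrm{Pic}(X(\mathcal S))$, and the $\tilde G$-module structure matches that of the Cox ring of $X(\mathcal S)$ via the spherical system identification just established.

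The main obstacle is the wonderful-variety verification, specifically the smoothness and transversal intersection of the $G$-stable boundary divisors at the unique closed orbit, together with the identification of the generic $\tilde G$-stabilizer with the spherical subgroup predicted by $\mathcal S$. Both require translating the affine-toric data on $(\Hilb(\mathcal S), \mathbb G_m^\Delta) = (\mathbb A^r, \mathrm{diag})$ into precise local geometric statements near the closed orbit via the tangent-space description of Theorem~\ref{tangentspace}, and it is here that the preparatory work on the obstruction space has to be leveraged in full.
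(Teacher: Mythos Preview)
Your overall architecture---free $\mathbb G_m^\Delta$-action, quotient, then match invariants---tracks the paper's, but the technical route diverges substantially. The paper does \emph{not} argue smoothness from ``flat over smooth with smooth fibers'' (though your version of that step is in fact valid and slightly cleaner). Instead, the paper first establishes that $\mathcal X^{\mathrm{univ}}$ is a \emph{spherical} $\tilde G\times T$-variety and computes its spherical roots and colors directly (Proposition~\ref{towardstheinvariants}); then uses Lemma~\ref{goodopenset} to map $\mathring{\mathcal X}^{\mathrm{univ}}$ to $\bigoplus_D V(\lambda_D)^*\setminus\{0\}$ and deduces from this that the quotient is \emph{toroidal}; then invokes the local structure theorem for spherical varieties to write $\mathring{\mathcal X}^{\mathrm{univ}}$ $P$-equivariantly as $P^u\times W$ with $W$ an affine toric variety; and finally shows $\mathcal X^{\mathrm{univ}}$ (hence $W$) is \emph{factorial} by checking that all $G\times\mathbb G_m^\Delta$-stable divisors are principal with equations $e^\sigma$. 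Smoothness, completeness, and the normal-crossings boundary then all fall out from $W$ being a smooth affine toric variety.

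There are two genuine gaps in your plan. First, you never verify toroidality, and your treatment of completeness via a GIT-style map $X(\mathcal S)\to\Hilb(\mathcal S)/\!\!/\mathbb G_m^\Delta$ is not a proof: you would need to control what happens to the fiber direction under the quotient, and the target being a point does not by itself force the source to be projective. The paper sidesteps both issues via the $P^u\times W$ decomposition. Second, your Cox ring argument is circular: appealing to Brion's characterization requires knowing that $R(\mathcal S)=\mathcal O(\mathcal X^{\mathrm{univ}})$ is factorial, which is precisely what the paper's divisor-class-group computation establishes and which you have not done. Finally, the obstruction space plays no further role here---it was consumed in proving $\Hilb(\mathcal S)\cong\mathbb A^r$; the remaining work is pure spherical geometry on $\mathcal X^{\mathrm{univ}}$.
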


Combining the above result with Luna's reduction procedure, we conclude by proving Luna's conjecture (Corollary~\ref{conjecture}):

\begin{corollary}
The map $X\mapsto \mathscr S_X$, assigning to a wonderful $G$-variety its spherical system,
induces a bijective correspondence between the set of isomorphism classes of wonderful $G$-varieties and the set of spherical systems of $G$.
\end{corollary}

Appendix~\ref{Basicmaterialonsphericalvarieties} collects some already known results on spherical varieties.

In Appendix~\ref{combinatorics}, we recollect the list of spherical roots and we derive some significant combinatorial properties of the weights associated to any spherical system.

Appendix~\ref{cohomologies} (with the help of Appendix~\ref{combinatorics}) can be read independently to the body text.
Firstly, we perform computations on the cohomology spaces in degree $0$ and $1$ of the isotropy Lie algebra of a sum $v_\Delta$ of highest weight vectors of $V$ with coefficients in $V/\tilde{\mathfrak g}.v_\Delta$, $\tilde{\mathfrak g}$ being the Lie algebra of $\tilde G$. These spaces appear in the characterizations of the tangent space and the
obstruction space already mentioned. These results are thus applied in the last subsection to prove that this obstruction space is trivial.
\bigbreak

\paragraph{\textit{Acknowledgments}} I am very grateful to Michel Brion for the interest he demonstrated in my work, through his comments and advice.
I thank also Dominique Luna for enlightening exchanges, Peter Littelmann for the support he provides me and Dmitry Timashev for his critical reading of a previous version of this paper.

\bigbreak

\paragraph{\textbf{Notation}}

The ground field $k$ is the field of complex numbers.
Throughout this paper, $G$ is a connected reductive algebraic group.
We fix a Borel subgroup $B$ of $G$ and $T\subset B$ a maximal torus; the unipotent radical of $B$ is denoted by $U$.
The choice of $(B,T)$ defines the set of simple roots $S$ of $G$ as well as the set $\Lambda^+$ of dominant weights.
We label the simple roots as in Bourbaki (\cite{Bo}).

Let $\Xi(H)$ denote the character group of any group $H$; note that $\Xi(B)$ and $\Xi(T)$ are naturally identified.
Let $(\cdot,\cdot)$ be the natural pairing between $\Xi(T)$ and $\mathrm{Hom_\mathbb Z(\Xi(T),\mathbb Z)}$.
Then we have in particular that $(\alpha,\alpha^\vee)=2$ where $\alpha^\vee$ stands for the co-root of the simple root $\alpha$.
For any $\mu\in \Xi(T)$, $e^\mu$ refers to the corresponding regular function on $T$.

Recall that $\Lambda^+$ parametrizes the simple $G$-modules; by $V(\lambda)$, we denote the simple $G$-module
associated to $\lambda\in\Lambda^+$. The dual module $V(\lambda)^*$ is isomorphic to $V(\lambda^*)$ with $\lambda^*=-w_0(\lambda)$, $w_0$ being the longest element of the Weyl group of $(G,T)$.
Given any $G$-module $V$ and a weight $\mu\in\Xi(T)$ of $V$, let $V_\mu$ denote the $\mu$-weight space of $V$
and let $V_{(\mu)}$ be the isotypical component of type $V(\mu)$ in case $\mu\in\Lambda^+$.

For a given operation of a group $H$ on a set $X$, the set of points in $X$ which are fixed by $H$ is denoted by $X^H$.

\section{Wonderful varieties}\label{wondervar}

Throughout this section, we recall notions and results concerning wonderful varieties; for further details, unless otherwise stated, one may consult~\cite{Lu01} and the references given therein.

\begin{definition}\label{defwonderful}
A smooth complete algebraic variety equipped with an action of $G$ is said to be
\textit{wonderful} (of \textit{rank} $r$) if
\smallbreak
\noindent
{\rm (i)}\enspace
it contains a dense $G$-orbit whose complement is the union of $r$ smooth prime divisors $D_1,\ldots, D_r$ with normal crossings;
\smallbreak
\noindent
{\rm (ii)}\enspace
its $G$-orbit closures are given by the partial intersections $\cap_{i\in I} D_i$, where $I$ is a subset of $\{1,\ldots, r\}$.
\end{definition}

The radical of $G$ acts trivially on any wonderful $G$-variety.
Accordingly, we assume in the rest of this section that $G$ is semisimple and simply connected.

An algebraic $G$-variety is called \textsl{spherical} if it is normal and contains a dense $B$-orbit.
A spherical $G$-variety is called \textsl{toroidal} if each of its $B$-stable prime divisors which contains a $G$-orbit is also $G$-stable.

\begin{proposition}\label{w'fulcriterion}
A $G$-variety is wonderful if and only if it is complete, smooth, spherical, toroidal and contains a unique closed orbit of $G$
\end{proposition}

Let $X$ be a wonderful $G$-variety.
Denote the set of its $B$-stable and not $G$-stable prime divisors by $\mathcal D_X$.
We call $\mathcal D_X$ \textsl{the set of colors of $X$}; this set yields a basis of the Picard group $\mathrm{Pic}(X)$ of $X$ (\textsl{see}~\cite{Br89}).

Let $H\subset G$ be the stabilizer of a point in the open $G$-orbit of $X$.
Choose $H$ such that $BH$ is open in $G$ and let $p:G\rightarrow G/H$ be the natural projection.
For any $D\in\mathcal D_X$, $p^{-1}(D)$ is a $(B\times H)$-stable divisor w.r.t the left $B$-action and the right $H$-action on $G$.
Since $G$ is assumed to be factorial, $p^{-1}(D)$ is given by an equation.
Let $f_D\in k[G]$ be an equation of $p^{-1}(D)$; $f_D$ is uniquely defined by requiring that $f_D(1)=1$ and $f_D$ is a $(B\times H)$-eigenvector.
The weights $(\omega_D,\chi_D)$ of the $f_D$'s generate freely the abelian group $\Xi(B)\times_{\Xi(B\cap H)} \Xi(H)$;
\textsl{see} Lemma 2.2.1 in~\cite{Br07}.

Let $H^\sharp$ denote the intersection of the kernels of all characters of $H$.
Then $H^\sharp$ is a normal subgroup of $G$ and $H/H^\sharp$ is a diagonalizable group whose character group is $\Xi(H)$.
The variety $G/H^\sharp$ is quasi-affine and spherical under the natural action of $G\times (H/H^\sharp)^\circ$.

\subsection{Combinatorial invariants}

Retain the notation set up for a wonderful $G$-variety $X$ of rank $r$.
After Luna, we attach three combinatorial invariants to $X$ as follows.

The (unique) closed $G$-orbit $Y$ of $X$ yields the first invariant, a set of simple roots of $G$ denoted by $S^p_X$.
Let $P_X$ be the parabolic subgroup of $G$ containing $B$ and such that $Y\cong G/P_X$.
The set $S^p_X$ is precisely the set of simple roots of the Levi subgroup of $P_X$ containing $T$.

The second invariant is the set $\Sigma_X$ of \textsl{spherical roots of $X$} defined as the following set $\{\sigma_1,\ldots,\sigma_r\}$ of linearly independent characters of $T$.
Let $X_B$ be the complement in $X$ of the union of the colors of $X$; it is isomorphic to an affine space (\textsl{see}~\cite{Br89}).
Let $f_i\in k[X_B]$ be an equation (uniquely determined up to a non-zero scalar) of $X_B\cap D_i$.
Then $f_i$ is a $B$-eigenvector and the opposite of its $B$-weight is denoted by $\sigma_i$.
There is another characterization of the spherical roots of $X$, namely in terms of the $T$-weights of the normal space to the orbit $Y$ in $X$; \textsl{see}~\cite{L97} for details.

The third invariant is given by the set $\mathcal D_X$ and a collection of integers $a_{\sigma,D}$ indexed by $\Sigma_X\times \mathcal D_X$.
Let us now label the boundary divisors $D_i$ according to the spherical roots $\sigma_i$ of $X$.
For each $\sigma$ in $\Sigma_X$, we have
$$
[D_\sigma]=\sum_{D\in\mathcal D_X} a_{\sigma,D} [D]\quad \mbox{ in $\mathrm{Pic}(X)$}
$$
where the $a_{\sigma,D}$ are integers.
Equivalently, regarding the equations $f_i$ as $B$-weight vectors in the function field of $X$, we get
$$
(\sigma,0)=\sum_{D\in\mathcal D_X} a_{\sigma,D} (\omega_D,\chi_D).
$$

\subsection{Total coordinate ring}\label{Coxring}

Let $X$ be a wonderful $G$-variety and $\mathcal D=\mathcal D_X$ be its set of colors.

The following definition and results of this subsection are freely collected from Section 3 in~\cite{Br07}.

Set
$$
\tilde{G}=G\times \mathbb G_m^\mathcal D
$$
with
$\mathbb G_m^\mathcal D$ being the torus with character group $\mathbb Z^\mathcal D\cong\mathrm{Pic}(X)$.

Define the \textsl{total coordinate ring of $X$} as
$$
R(X)=\oplus_{(n_D)_D\in\mathbb Z^{\mathcal D}} H^0( X,\mathcal O_X(\sum_{D\in\mathcal D} n_D D)).
$$
This is a $\mathbb Z^\mathcal D$-graded finitely generated $k-$algebra. Further
$$
\tilde{X}:=\mathrm{Spec}\,R(X)
$$
is a factorial spherical $\tilde{G}$-variety.

\begin{proposition}[Proposition 3.11 in loc. cit.]\label{Brionfamily}
\smallbreak\noindent
{\rm(i)}\enspace
The canonical sections of the boundary divisors $D_\sigma$ ($\sigma\in\Sigma_X$) of $X$ from a regular sequence in $R(X)$ and generate freely the ring of invariants $R(X)^G$.
\smallbreak\noindent
{\rm(ii)}\enspace
The general fibers of the quotient morphism
$$
\pi:\tilde{X}\rightarrow \mathrm{Spec}(R(X)^G)
$$
are isomorphic to the spherical $G \times (H/H^\sharp)^\circ$-variety $\mathrm{Spec}\left(k[G/H^\sharp]\right)$.
\end{proposition}

Consider the action of $T$ on $\tilde{X}$ given by the homomorphism
$$
T\rightarrow \mathbb G_m^\mathcal D,\quad t\mapsto (\omega_D(t))_{D\in\mathcal D}.
$$
Recall the definition of the $f_D$'s with $D\in\mathcal D$ (after Definition~\ref{defwonderful}).

\begin{theorem}[Theorem 3.2.3 in loc. cit.]
There is an isomorphism of $(G\times T)$-algebras
$$
R(X)\cong \oplus_{\lambda,\mu} k[G/H^\sharp]_{(\lambda)} e^\mu
$$
where the sum runs over the dominant weights $\lambda\in\Lambda^+$ and characters $\mu$ of $T$
such that $\mu-\lambda$ is a linear combination of spherical roots of $X$ with non-negative coefficients;
the right-hand side is a subalgebra of $k[G\times T]$.

This isomorphism identifies the canonical section of any boundary divisor $D_\sigma$ with $e^\sigma$ and that of any color $D$ of $X$ with $f_D e^{\omega_D}$.
\end{theorem}

\section{Spherical systems}\label{Spherical systems}

The following objects were introduced by Luna in~\cite{Lu01} and have been inspired by Wasserman's classification of rank $2$ wonderful varieties.

\begin{definition}\label{sphericalroots}
\textsl{A spherical root of $G$} is the spherical root of a rank $1$ wonderful $G$-variety.
\end{definition}

Wonderful $G$-varieties of rank $1$ are by definition $2$-orbit varieties whose closed $G$-orbit is $1$-codimensional; they were classified by Akhiezer in~\cite{A} (\textsl{see} also~\cite{Br97}).
The spherical roots of $G$ are explicitly listed in~\cite{W}; \textsl{see} Appendix~\ref{listsphericalroots} for recollection.

\begin{definition}\label{def-sph-syst}
Let $S^p$ be a set of simple roots of $G$, $\Sigma$ a set of spherical roots of $G$ and $\mathbf A$ an abstract set equipped with a pairing
$c: \mathbf A\times \Sigma\rightarrow \mathbb Z$.
The triple $(S^p,\Sigma,\mathbf A)$ is called a \textsl{spherical system of $G$} if it satisfies the following axioms.
\smallbreak\noindent{\rm ($\mathbf A1$)}\enspace
$c(D_\alpha^\pm,\sigma)\leq 1$ for every $\alpha\in\Sigma\cap S$ and $\sigma\in\Sigma$; with equality only if $\sigma\in S$.
\smallbreak
\noindent{\rm ($\mathbf A2$)}\enspace
For any $\alpha\in\Sigma\cap S$, define $\mathbf A(\alpha)=\{D\in\mathbf A: c(D,\alpha)=1 \}$. 
Then $\mathbf A(\alpha)$ is of cardinality $2$ and $\mathbf A$ is the union of these sets.
\smallbreak\noindent
{\rm ($\mathbf A3$)}\enspace
$c(D_\alpha^-,\sigma)+c(D_\alpha^+,\sigma)=(\alpha^\vee,\sigma)$ for every $\alpha\in S\cap\Sigma$ and $\sigma\in\Sigma$ along with $\mathbf A(\alpha)=\{D_\alpha^\pm\}$.
\smallbreak\noindent
{\rm($\Sigma 1$)}\enspace $(\alpha^\vee,\sigma)\in2\mathbb Z_{\leq 0}$ for every $\sigma\in\Sigma\setminus\{2\alpha\}$ and $\alpha\in S\cap\frac{1}{2}\Sigma$.
\smallbreak
\noindent
{\rm($\Sigma 2$)}\enspace $(\alpha^\vee,\sigma)=(\beta^\vee,\sigma)$ for every $\sigma\in\Sigma$ and $\alpha,\beta\in S$ orthogonal and such that $\alpha+\beta\in\Sigma$.
\smallbreak\noindent
{\rm($S$)}\enspace For each $\sigma\in\Sigma$, the data $S^p$ and $\sigma$ are those of a rank $1$ wonderful $G$-variety.
\smallbreak\noindent
\smallbreak\end{definition}

\begin{definition}\label{sphericallyclosed}
A spherical system is \textsl{spherically closed} if for each of its spherical roots $\sigma$,  Axiom $(S)$ holds for $S^p$ and $2\sigma$ only if $\sigma\in S$.
\end{definition}

\begin{remark}
Axiom $(S)$ hence also the property of being spherically closed can be formulated in a purely combinatorially way; \textsl{see} Appendix~\ref{listsphericalroots} for details.
\end{remark}

\bigbreak
 
\subsection{Set of weights associated to a spherical system}~\label{colors}
The purpose of this subsection is to attach to any spherical system $\mathscr{S}=(S^p,\Sigma,\mathbf A)$ of $G$,
a set of linearly independent characters $(\omega_D,\chi_D)$ of $T\times C$ for some group $C$.

\subsubsection{}
These characters are indexed by a finite set $\Delta$, \textsl{the set of colors of $\mathscr{S}$}.
The set $\Delta$ is defined as follows (\textsl{see}~\cite{L97}).
Set
$$
S^a=S\cap \frac{1}{2}\Sigma\quad \mbox{ and }\quad S^b=S\setminus\left(S^p\cup (S\cap \Sigma)\cup S^a\right).
$$
The sets $S^p$, $(S\cap \Sigma)$ and $S^a$ are pairwise disjoint thanks to the axioms $(S)$ and $(\Sigma1)$.

If $\alpha$ and $\beta$ are orthogonal simple roots whose sum is an element of $\Sigma$, write $\alpha\sim\beta$.
Define now
$$
\Delta:=\mathbf A \sqcup S^{a}\cup S^b/\sim .
$$
In the remainder, we shall denote the elements $\alpha$ of $\Delta\setminus\mathbf A$ rather by $D_\alpha$.

\subsubsection{}
Let $\omega_\alpha$ denote the fundamental weight associated to the simple root $\alpha$.

Given $D\in\Delta$, we define (after \cite{F})
$$
\omega_{D}=\left\{
\begin{array}{ll}
\sum_{\alpha:D\in\mathbf A(\alpha)}\omega_{\alpha} &  \mbox{ if $D\in\mathbf A$}\\
2\omega_\alpha   &  \mbox{ if $D=D_\alpha$ with $\alpha\in S^a$}\\
\sum_{\alpha:D_\alpha=D}\omega_\alpha &\mbox{ otherwise}
\end{array}\right ..
$$
Note that these weights may not be pairwise distinct: in case $\alpha\in S\cap\Sigma$, the weight $\omega_\alpha$ may occur
twice, as shown right below -- but not more (since $\mathbf A(\alpha)$ is of cardinality $2$).

\begin{example}
The variety $X=\mathbb P^1\times\mathbb P^1$ equipped with the diagonal action of $SL_2$ is wonderful of rank $1$.
Its spherical root is the simple root of $SL_2$ and the set $S^p_X$ is empty.
This yields naturally a spherical system with $\mathbf A$ of cardinality $2$.
The set $\Delta$ of colors thus equals $\mathbf A$ and the associated weights $\omega_D$ are all equal to the fundamental weight $\omega_\alpha$.
\end{example}

\subsubsection{}
We now introduce some additional characters $\chi_D$ indexed by $\Delta$ (\textsl{see} also ~\cite{L97} and Lemma 3.2.1 with its proof in~\cite{Br07}).

Given $D\in\Delta$ and $\sigma\in\Sigma$, let us define (after~\cite{L97})
$$
a_{\sigma,D}=\left\{
\begin{array}{rl}
c(D^+_\alpha,\sigma) & \mbox{ if $D=D^+_\alpha$} \\
c(D^-_\alpha,\sigma) & \mbox{ if $D=D^-_\alpha$}\\
\frac{1}{2}(\sigma,\alpha^\vee) & \mbox{ if $D=D_\alpha$ with $\alpha\in S^a$}\\
(\sigma,\alpha^\vee)& \mbox{ for the remaining $D=D_\alpha$}
\end{array}\right. .
$$

The spherical roots in $\Sigma$ are linearly independent characters of $T$; \textsl{see} Lemma~\ref{lindependencesphroots} for a proof.
Let $\mathbb G_m^r$ be the torus whose character group is spanned freely by the set $\Sigma$ and let
$\mathbb G_m^\Delta$ be the torus with character group $\mathbb Z^\Delta$.
Consider the morphism
$$
\varphi:\mathbb G_m^\Delta\rightarrow \mathbb G_m^r: (t_D)_{D\in\Delta}\mapsto \left( \prod_{D\in\Delta} t_D^{a_{\sigma,D}}\right)_{\sigma\in\Sigma}.
$$
Let $C$ be its kernel; it is a diagonalizable group.

Define the character $\chi_D$ as the restriction to $C$ of the $D$-component character
$$
\varepsilon_D:(t_D)_{D\in\Delta}\mapsto t_D .
$$

\begin{lemma}\label{enlargeddefinitionofweights}
The characters $(\omega_D,\chi_D)$ of $T\times C$ are linearly independent.
Further they satisfy the following equalities
\begin{equation}\label{sphericalversusweights}
(\sigma,0)=\sum_{D\in\Delta} a_{\sigma,D} (\omega_D,\chi_D)\quad\mbox {for all $\sigma\in\Sigma$}.
\end{equation}
\end{lemma}

\begin{proof}
By definition of the weights $\chi_D$, we have $\sum_{D\in\Delta} a_{\sigma,D} \chi_D=0$ for every $\sigma\in\Sigma$.
Given $\alpha\in S$, we have $(\omega_D,\alpha^\vee)=0, 1$ or $2$. Further,
$\sum_{D\in\Delta} a_{\sigma,D} (\omega_D,\alpha^\vee)$ consists of at most two non-trivial terms.
Specifically, we have for all $\sigma\in\Sigma$
$$
\sum_{D\in\Delta} a_{\sigma,D} (\omega_D,\alpha^\vee)=\left\{
\begin{array}{rl}
0 & \mbox{ if $\alpha\in S^p$} \\
a_{\sigma,D_{\alpha}^+}+a_{\sigma,D_\alpha^-} & \mbox{ if $\alpha\in S\cap\Sigma$}\\
2a_{\sigma,D_\alpha} & \mbox{ if $\alpha\in S^{2a}$}\\
a_{\sigma,D_\alpha}& \mbox{ otherwise}
\end{array}\right. .
$$
This together with the very definition of the scalars $a_{\sigma,D}$ implies the equality (\ref{sphericalversusweights}).

Consider now the morphism $\psi:T\times C\rightarrow \mathbb G_m^\Delta$ defined naturally by the characters $(\omega_D,\chi_D)$.
The composition $\varphi\circ\psi:T\times C\rightarrow\mathbb G_m^r$ is then an epimorphism thanks to (\ref{sphericalversusweights}) along with the linear independence of the spherical roots in $\Sigma$ (Lemma~\ref{lindependencesphroots}).
The morphism $\psi$ is thus in turn an epimorphism.
The linear independence of the characters $(\omega_D,\chi_D)$ follows.
\end{proof}

The set of characters $(\omega_D,\chi_D)$ will be referred in the remainder as \textit{the set of weights associated to $\mathscr{S}$}.

\subsection{Relations with wonderful varieties}

Recall the notation set up in Section~\ref{wondervar}.

\subsubsection{}\label{colorsvswonderful}

For a given wonderful $G$-variety $X$, the third invariant previously associated to $X$ may be refined.
Instead of the whole set $\mathcal D_X$ of colors, we take the following subset $\mathbf A_X$ of it.
Let $\alpha\in S$ and $P_\alpha$ be the corresponding minimal parabolic subgroup of $G$ containing $B$.
Let $\mathcal D_X(\alpha)$ denote the set of colors $D$ of $X$ such that $P_\alpha. D\neq D$.
The set $\mathbf A_X$ is defined as the union of the $\mathcal D_X(\alpha)$'s where $\alpha\in\Sigma_X$.
As third invariant for $X$, we take the set $\mathbf A_X$ and the pairing on $\mathbf A_X\times \Sigma_X$ defined by the 
integers $a_{\sigma,D}$ indexed by $\Sigma_X\times\mathbf A_X$.

\begin{theorem}[\cite{Lu01}]
Suppose $G$ is of adjoint type, \emph{i.e.} the center of $G$ is trivial.
The triple $(S^p_X,\Sigma_X,\mathbf A_X)$ associated to a wonderful $G$-variety $X$ is a spherical system of $G$.
\end{theorem}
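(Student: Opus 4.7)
The plan is to verify, one axiom at a time, that the triple $(S^p_X,\Sigma_X,\mathsf A_X)$ satisfies the combinatorial constraints (A1)--(A3), ($\Sigma$1), ($\Sigma$2), and (S) listed in the definition of a spherical system. The adjointness of $G$ serves to guarantee that the fundamental weights $\omega_\alpha$ and hence the weights $(\omega_D,\chi_D)$ associated to the colors are bona fide characters of the relevant groups, so all the coefficients one wants to read off Picard-group or character relations are genuine integers. The single unifying geometric input is the following: for every subset $I\subset\{1,\dots,r\}$ the partial intersection $X_I:=\cap_{i\in I}D_i$ is again a wonderful $G$-variety, of rank $r-|I|$; its colors are the traces of the colors of $X$ and its spherical roots are the $\sigma_j$ for $j\notin I$. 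Most axioms are local, so they can be checked after specializing $I$ so that $X_I$ has rank $1$ or $2$.

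First I would dispose of (A1)--(A3). Axiom (A1) is a tautological consequence of the construction: only the colors of the form $D_\alpha^\pm$ with $\alpha\in S\cap\Sigma_X$ carry a $\pm$-label, because the two colors moved by a minimal parabolic $P_\alpha$ of type $\mathsf A_1$ appear exactly when $\alpha$ itself is a spherical root. Axiom (A3) is obtained by pairing with $\alpha^\vee$ the basic Picard-group (equivalently, weight) identity
\[
(\sigma,0)=\sum_{D\in\mathcal D_X}a_{\sigma,D}(\omega_D,\chi_D)
\]
and using that only $D_\alpha^+$ and $D_\alpha^-$ contribute a non-zero value of $\omega_D(\alpha^\vee)$, each equal to $1$. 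Axiom (A2) is finer: the inequality $a_{\alpha,\sigma}^\pm\le 1$ and the characterization of the equality case come from a local analysis around the rank-$1$ subvariety $X_i$ with spherical root $\sigma$, comparing with Wasserman's description of rank-$1$ wonderful varieties.

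The axioms ($\Sigma$1) and ($\Sigma$2) are conditions on pairs $\sigma,\sigma'\in\Sigma_X$, so I would reduce to the rank-$2$ wonderful subvarieties $X_I$ with $|I|=r-2$, whose spherical roots are exactly $\{\sigma,\sigma'\}$. Wasserman's classification of rank-$2$ wonderful varieties then lists all the admissible configurations, and inspecting each case yields ($\Sigma$1) (which governs the interaction of a spherical root $2\alpha$ with the remaining spherical roots) and ($\Sigma$2) (which reflects the symmetry between two orthogonal simple roots whose sum is spherical). In particular, ($\Sigma$1) says that rank-$1$ wonderful varieties with spherical root $2\alpha$ embed only into rank-$2$ configurations respecting a parity constraint, and this is checked case-by-case in Wasserman's list.

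Finally, for (S) I would first observe that the parabolic $P_X$ stabilizing the closed $G$-orbit of $X$ stabilizes every $D_i$, hence acts on each rank-$1$ subvariety $X_i$; consequently its set of simple roots $S^p_X$ satisfies $S^p_X\subset S^p(\sigma_i)$ for all $i$. The reverse inclusion $S^{pp}(\sigma_i)\subset S^p_X$ is again a rank-$1$ verification against Wasserman's list, which singles out precisely the subsets $S^{pp}(\sigma)$ appearing in the compatibility condition. The main obstacle is indeed this last step: it rests on the complete rank-$1$ (and for ($\Sigma$1)--($\Sigma$2) the rank-$2$) classification of wonderful $G$-varieties, and on showing that the local pictures obtained from the $X_i$ patch together to constrain the global $(S^p_X,\Sigma_X,\mathsf A_X)$. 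Once these case-by-case verifications are assembled, the triple satisfies all six axioms and is therefore a spherical system of $G$.
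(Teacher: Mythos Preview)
The paper does not prove this theorem: it is stated with attribution to Luna~\cite{Lu01} and used as background, with no argument supplied in the text. So there is no ``paper's own proof'' to compare against; what you have written is a reconstruction of the original argument.

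That said, your outline follows the standard route taken in~\cite{Lu01}. The key reduction---that each partial intersection $X_I=\cap_{i\in I}D_i$ is again wonderful, with $S^p_{X_I}=S^p_X$, spherical roots $\{\sigma_j:j\notin I\}$, and colors the traces of those of $X$---is exactly what lets one localize the axioms to rank~$1$ and rank~$2$ and then appeal to the classifications of Akhiezer/Brion and Wasserman. Your treatment of (A1)--(A3) is fine.

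One point to sharpen: in your argument for~(S) you write that $P_X$ ``stabilizes every $D_i$, hence acts on each rank-$1$ subvariety $X_i$; consequently $S^p_X\subset S^p(\sigma_i)$''. The first clause is vacuous (the $D_i$ are $G$-stable), and it does not by itself yield the inclusion. The correct reason is that the closed $G$-orbit of each $X_i$ coincides with the closed $G$-orbit of $X$ (both equal $\cap_j D_j$), so $S^p_{X_i}=S^p_X$; then the rank-$1$ classification gives the compatibility of $(S^p_{X_i},\sigma_i)$, i.e.\ $S^{pp}(\sigma_i)\subset S^p_X\subset S^p(\sigma_i)$. With that adjustment your sketch is faithful to Luna's proof.
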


\begin{conjecture}[\cite{L97}]
Suppose $G$ is of adjoint type.
The map which assigns to any wonderful $G$-variety $X$ the triple $(S^p_X,\Sigma_X,\mathbf A_X)$ defines a bijection between the
set of isomorphism classes of wonderful $G$-varieties and the set of spherical systems of $G$.
\end{conjecture}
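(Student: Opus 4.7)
The plan is to prove the conjecture as a corollary of the results announced in the introduction, combined with two external inputs: Losev's uniqueness theorem and Luna's reduction from arbitrary spherical systems to spherically closed ones. Concretely, I need to show that the assignment $X \mapsto (S^p_X,\Sigma_X,\mathsf{A}_X)$ is both injective and surjective on isomorphism classes.

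For injectivity, I would simply invoke Losev's theorem, cited in the introduction as giving the uniqueness half of Luna's conjecture unconditionally. Thus if two wonderful $G$-varieties share the same spherical system, they are $G$-equivariantly isomorphic. All the real work therefore lies in surjectivity: given an abstract spherical system $\mathcal{S}=(S^p,\Sigma,\mathsf{A})$ of the adjoint group $G$, I must exhibit a wonderful $G$-variety whose triple of invariants is precisely $\mathcal{S}$.

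The surjectivity argument would proceed in two stages. First, using Luna's reduction procedure from Section~6 of~\cite{Lu01}, I would reduce to the case where $\mathcal{S}$ is spherically closed: any spherical system can be obtained from a spherically closed one by quotienting an associated wonderful variety by a finite central subgroup, so constructing the wonderful variety in the spherically closed case yields the general case. Second, for spherically closed $\mathcal{S}$, I would apply the construction from the body of the paper: form $\tilde{G}=G\times C^\circ$, the $\tilde G$-module $V=\oplus_{D\in\Delta} V(\lambda_D)^*$, the point $v_\Delta=\sum_D v^*_{\lambda_D}$ and its orbit closure $X_0(\mathcal{S})\subset V$; then invoke Theorem~\ref{Hilb} to obtain $\Hilb(\mathcal{S})\cong\mathbb{A}^r$ as an affine toric variety, and finally form the quotient
$$
X(\mathcal{S})=\mathring{\mathcal{X}}^{\mathrm{univ}}/\mathbb{G}_m^\Delta .
$$
By Theorem~\ref{existence}, $X(\mathcal{S})$ is a wonderful $G$-variety with spherical system exactly $\mathcal{S}$, which provides the preimage required for surjectivity.

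The step I expect to be the real obstacle lies inside the two theorems being invoked rather than in the corollary itself: namely, the smoothness of $\Hilb(\mathcal{S})$ (equivalently, vanishing of the obstruction space of the functor of invariant deformations of $X_0(\mathcal{S})$, cf.\ Theorem~\ref{obstructionspace}), and the verification that passing to the toric quotient of $\mathring{\mathcal{X}}^{\mathrm{univ}}$ produces a variety that is genuinely wonderful with the correct colors, spherical roots and Cartan pairing. Once these are in hand, the bijection statement follows formally, and I would conclude by remarking that the total coordinate ring statement of Theorem~\ref{existence} recovers Brion's family from~\cite{Br07} as the universal family over $\Hilb(\mathcal{S})$, closing the loop between the algebro-geometric and combinatorial invariants.
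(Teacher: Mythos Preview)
Your proposal is correct, and the surjectivity half matches the paper exactly: reduce to spherically closed systems via Luna's Section~6, then invoke Theorem~\ref{existence}. The difference lies in the injectivity argument. You appeal to Losev's theorem, which is legitimate and is indeed mentioned in the introduction as settling uniqueness. The paper, however, gives its own uniqueness proof using the Hilbert scheme machinery: given a wonderful variety $X$ with spherical system $\mathcal S$, it identifies $\mathrm{Spec}\,R(X)$ with the total space of the universal family over $\Hilb(\mathcal S)$ (via Theorem~\ref{Hilb} and the structure of $R(X)$ recalled in Section~\ref{Coxring}); from this it deduces that the principal isotropy groups $H^\sharp$ of any two wonderful varieties with the same spherical system are isomorphic, hence so are the $H$'s (since $H/H^\sharp$ is read off $\mathcal S$), and one concludes by uniqueness of the wonderful compactification. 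What your route buys is brevity and independence from the paper's deformation-theoretic results; what the paper's route buys is a proof that stays inside the geometric framework just built and, as a byproduct, makes explicit the identification you only mention in your final remark---that Brion's total coordinate ring family \emph{is} the universal family of $\Hilb(\mathcal S)$---rather than treating it as a side observation.
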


We will prove this conjecture in Section~\ref{proofofconjecture}.

\subsubsection{}
Thanks to the results obtained in Section 3 of~\cite{L97} as well as Lemma 3.2.1 (and its proof) in~\cite{Br07}, we have:

\begin{proposition}\label{colors-sphericalsystems-wful}
Let $X$ be a wonderful $G$-variety.
The set of $(B\times H)$-weights of the equations $f_D$ ($D\in\mathcal D_X$) is the set of weights associated to the spherical system $\mathscr{S}_X$ of $X$.
Further, the diagonalizable group $C$ attached to $\mathscr{S}_X$ is the group $H/H^\sharp$.
\end{proposition}

Finally, let us mention that the set of colors of a wonderful variety $X$ coincides with that of its spherical system.

\section{Invariant Hilbert schemes}

\subsection{Definition}

The definitions and results stated in this section are taken from ~\cite{AB} except that they are formulated in a more general setting in loc. cit..
One may consult also the survey~\cite{Br10}.

Let $\lambda_1$, \ldots, $\lambda_s$ be linearly independent weights in $\Lambda^+$.
Denote by $\underline\lambda$ the corresponding s-tuple and by $\Gamma$ the submonoid of $\Lambda^+$ spanned by the $\lambda_i$'s.
Set
$$
V:=V(\lambda_1^*)\oplus\ldots\oplus V(\lambda_s^*).
$$

\begin{definition}
Given a scheme $S$, \textsl{a family $\mathcal X$ of closed $G$-subschemes of $V$ over $S$ of type $\Gamma$}
is a closed $G$-subscheme of $V\times S$ such that
\begin{enumerate}
	\item the projection $\pi:\mathcal X\rightarrow S$ is $G$-invariant;
	\item the sheaf $\mathcal F_\lambda:=(\pi_*\mathcal O_{\mathcal X})_\lambda^U$ of $\mathcal O_S$-modules is invertible for every $\lambda\in\Gamma$ and $0$ otherwise.
\end{enumerate}
\end{definition}

With the preceding notation, the sheaf $\pi_*\mathcal O_{\mathcal X}$ is isomorphic (as an $(\mathcal O_S\times G)$-module) to $\oplus_{\lambda\in\Gamma} \mathcal F_\lambda\otimes V(\lambda)$; \textsl{see} Lemma 1.2. in loc. cit.

\begin{remark}
Since no confusion can arise, $S$ denotes throughout this section a scheme and not the set of simple roots as stated previously.
\end{remark}

\begin{theorem}[Theorem 1.7 in~\cite{AB}]
The functor
which assigns to any scheme $S$ the set of families
of closed $G$-subschemes of $V$ over $S$ of type $\Gamma$ is representable by a quasi-projective scheme, \textsl{the invariant Hilbert scheme} $\mathrm{Hilb}^G_{\Gamma}(V)$.

\end{theorem}

In particular, $\mathrm{Hilb}^G_{\Gamma}(V)$ contains as closed point the $G$-variety $X_0=X_0(\underline\lambda)$ given by the $G$-orbit closure within $V$ of
$$
v_{\underline\lambda^*}=v_{\lambda_1^*}+\ldots+v_{\lambda_s^*}
$$
where $v_{\lambda_i^*}$ stands for a highest weight vector in $V$ of weight $\lambda_i^*$.
Note that $X_0$ is a spherical $G$-variety thanks to the criterion recalled in Section~\ref{affinesphericitycriterion}.
More generally, any closed point of $\mathrm{Hilb}^G_{\Gamma}(V)$  is a spherical $G$-variety.

A subvariety of $V$ is called \textsl{non-degenerate} if its projection onto $V(\lambda_i^*)$ is non-trivial for every $i=1,\ldots , s$.

\begin{theorem}[\cite{AB}]\label{AB-representability}
The non-degenerate irreducible subvarieties of $V$ which can be regarded as closed points of $\mathrm{Hilb}^G_{\Gamma}(V)$ are parameterized by a connected affine open subscheme $\mathrm{Hilb}^G_{\underline\lambda}$ of $\mathrm{Hilb}^G_{\Gamma}(V)$.
\end{theorem}

\begin{remark}\label{connectionwithmoduli}
Theorem~\ref{AB-representability} gathers several results of ~\cite{AB} together.
Specifically, since the weights $\lambda_i$ are assumed to be linearly independent, $\mathrm{Hilb}^G_{\underline\lambda}$ can be identified to the so-called \textsl{moduli scheme $M_\Gamma$ of multiplicity-free varieties with weight monoid $\Gamma$}; \textsl{see} Corollary 1.17 in \cite{AB} or Example 4.8 in~\cite{Br10}.
We do not recall the intrinsic definition of $M_\Gamma$ since it will not be used explicitly. 
Further by Theorem 2.7 in \cite{AB}, $M_\Gamma$ is affine and connected. 
\end{remark}

\subsection{Toric action}\label{Toricaction}
Let $Z(G)$ be the center of $G$.
We recall briefly how the action of the adjoint torus $T_\ad:=T/Z(G)$ on $\mathrm{Hilb}^G_{\underline\lambda}$ is defined; \textsl{see} Section 2.1 in~\cite{AB} for details.

Consider a family $\pi:\mathcal X\rightarrow  S$ of closed $G$-subschemes of $V$ of type $\Gamma$.
Let $Z(G)$ act on $\mathcal X\times T$ by $z.(x,t)=(z.x,z^{-1}t)$.
Then
$$
\tilde{\mathcal X}=(\mathcal X\times T)/Z(G)
$$
is a scheme equipped with an action of $G$.
The morphism $\pi\times\mathrm{id} :\mathcal X\times T\rightarrow S\times T$ descends to a morphism
$$
\tilde{\pi}:\tilde{\mathcal X}\rightarrow (S\times T)/Z(G)=S\times T_\ad .
$$
Moreover, we have an isomorphism of $G-\mathcal O_S\otimes k[T_\ad]$-modules
$$
\tilde{\pi}_*\mathcal O_{\tilde{\mathcal X}}\simeq\oplus_{\lambda\in\Lambda^+}(\pi_*\mathcal O_{\mathcal X})_{(\lambda)} e^\lambda\otimes_k k[T_\ad].
$$

Consider the action of $G$ on $V(\lambda)\times T_\ad$ given by: $g.(v,s)=(gv,s)$ and the action of $T$ on $V(\lambda)\times T_\ad$ via

\begin{equation}\label{normalisedaction}
t.(v,s)=(w_0(\lambda)(t)t^{-1}v,ts).
\end{equation}

Set
$$
\mathbf{G}:=(G\times T)/Z(G).
$$
Let $\mathcal X=V(\lambda)$. With the notation set up above, the $\mathbf{G}$-schemes $\widetilde{\mathcal X}$ and
$V(\lambda)\times T_\ad$ turn out to be  isomorphic and in turn so are $\tilde V$ and $V\times T_\ad$.

The scheme$(\tilde{\mathcal X},\tilde{\pi})$ thus defines a family of closed $G$-subschemes of $V$ of type $\Gamma$ over $S\times T_\ad$.
Applying this construction to the universal family, 
one obtains a morphism of schemes
$$
a:T_\ad\times \mathrm{Hilb}^G_{\Gamma}(V)\rightarrow \mathrm{Hilb}^G_{\Gamma}(V).
$$

\begin{theorem}[Proposition 2.1/ Theorem 2.7 in~\cite{AB}]\label{ABtoricaction}
The morphism $a$ defines an action of $T_\ad$ on $\mathrm{Hilb}^G_{\Gamma}(V)$ and this action stabilizes $\mathrm{Hilb}^{G}_{\underline\lambda}$.
Furthermore under this action, $\mathrm{Hilb}^{G}_{\underline\lambda}$ contracts to $X_0$.
In particular, $X_0$ is the unique fixed point of $T_\ad$ in $\Hilb^G_{\underline\lambda}$.
\end{theorem}

\begin{remark}
Similarly as Theorem~\ref{AB-representability}, the two last assertions of Theorem~\ref{ABtoricaction} involves the identification of $\mathrm{Hilb}^{G}_{\underline\lambda}$ with $M_\Gamma$.
\end{remark}


\subsection{Tangent space}
Let $G_{v_{\underline\lambda^*}}$ be the isotropy group in $G$ of $v_{\underline\lambda^*}$
and let $\mathfrak g$ be the Lie algebra of $G$.
Note that $G_{v_{\underline\lambda^*}}$ stabilizes $\mathfrak g.v_{\underline\lambda^*}$ whence an action of $G_{v_{\underline\lambda^*}}$ on the normal space 
$V/\mathfrak g.v_{\underline\lambda^*}$.

Consider the action of the adjoint torus $T_\ad$ on the scheme $\mathrm{Hilb}_{\underline \lambda}$ recalled in Subsection~\ref{Toricaction}.
Since $X_0$ is a $T_\ad$-fixed point for this action, the tangent space $T_{X_0}\mathrm{Hilb^G_{\underline\lambda}}$ of $\mathrm{Hilb^G_{\underline\lambda}}$ at $X_0$ carries
obviously a $T_\ad$-module structure on $T_{X_0}\mathrm{Hilb}_{\underline \lambda}$.
In the following, we refer to this module structure.

Moreover, we consider the \emph{normalised action} of the adjoint torus $T_\ad$ on $V$, that is the action naturally induced by that defined in~(\ref{normalisedaction}).
This action commutes with the action of $G$ on $V$ and induces in particular an action of $T_\ad$ on $(V/{\mathfrak g}.v_{\underline\lambda^*})^{G_{v_{\underline\lambda^*}}}$.

\begin{proposition}[Proposition 1.13 and 1.15 in~\cite{AB}]\label{tgtAB}
Let $\mathcal N_{X_0/V}$ denote the normal sheaf of $X_0$ in $V$.
\smallbreak\noindent
{\rm(i)}	\enspace The tangent space $T_{X_0}\mathrm{Hilb}^G_{\underline\lambda}$  is canonically isomorphic to the invariant space $H^0(X_0,\mathcal N_{X_0/V})^G$. \smallbreak\noindent
{\rm(ii)}	\enspace The restriction map $H^0(X_0,\mathcal N_{X_0/V})^G\rightarrow H^0(G.v_{\underline\lambda^*},\mathcal N_{X_0/V})^G$
yields an injection of $T_\ad$-modules
$$
T_{X_0}\mathrm{Hilb}^G_{\underline\lambda}\hookrightarrow\left(V/\mathfrak gv_{\underline\lambda^*}\right)^{G_{v_{\underline\lambda^*}}}.
$$
\smallbreak\noindent
{\rm(iii)}	\enspace If the boundary $X_0\setminus G.v_{\underline\lambda^*}$ is of codimension at least $2$ then
the above injection is an isomorphism of $T_\ad$-modules.
\end{proposition}

In the next proposition, we adopt the following notation.
Given a $T_\ad$-weight vector $[v_\gamma]$ of $\left(V/\mathfrak g.v_{\underline\lambda^*}\right)^{G_{v_{\underline\lambda^*}}}$ of weight $\gamma$, let $s_\gamma$ denote the corresponding section in $H^0(G.v_{\underline\lambda^*},\mathcal N_{X_0/V})^G$ that is, 
$$
s_\gamma(v_{\underline \lambda^*})=[v_\gamma].
$$
Let $\rho_1,\ldots,\rho_s$ form the basis dual to $\lambda_1^*,\ldots, \lambda_s^*$.
For each $\lambda_i^*$,  we write $\underline{\hat\lambda_i^*}$ for the $(s-1)$-tuple given by all the $\lambda_k^*$ but $\lambda_i^*$.
The set of simple roots $\alpha$ which are orthogonal to every $\lambda_k^*$ without exception (resp. but $\lambda_i^*$) is denoted by $\underline{ \lambda^*}^\perp$
(resp. $\underline{\hat \lambda_i^*}^\perp$).

\begin{proposition}\label{extension-section}
Let $[v_\gamma]$ be a $T_\ad$-weight vector of $\left(V/\mathfrak g.v_{\underline\lambda^*}\right)^{G_{v_{\underline\lambda^*}}}$ of weight $\gamma$
with $v_\gamma\in \oplus_k V(\lambda_k^*)_{\lambda_k^*-\gamma}$.
\begin{enumerate}[label=\textup{(\arabic*)},ref=\textup{\arabic*}]
\item
If $\rho_i(\gamma)\leq 0$ for all $i=1,\ldots, s$ such that $\underline{\lambda^*}^\perp=\underline{\hat\lambda_i^*}^\perp$ then the section $s_\gamma$ extends to $X_0$.
\item 	
Let $\rho_i(\gamma)>0$ for some $i=1,\ldots, s$ such that $\underline{\lambda^*}^\perp=\underline{\hat\lambda_i^*}^\perp$.
If further the projection of $v_\gamma$ onto $\oplus_{k\neq i}V(\lambda_k^*)$ does not belong to $\mathfrak g.v_{\underline {\hat\lambda_i^*}}$ 
then $s_\gamma$ does not extend to $X_0$.
\end{enumerate}

\end{proposition}

\begin{proof}
This proof has been inspired by the content of Section 3 in~\cite{PVS}.
 
First, one should note that the section $s_\gamma$ extends to $X_0$ if and only if $s_\gamma$ extends to the union of all $1$-codimensional $G$-orbits of $X_0$.
Further, the $1$-codimensional $G$-orbits of $X_0$ are given by the $G$-orbit closures within $V$ of the vectors $v_{\underline{\hat\lambda_i}}$ with $\lambda_i$   satisfying the condition $\underline{\lambda^*}^\perp=\underline{\hat\lambda_i^*}^\perp$. If necessary, see e.g. loc. cit. for details.
Consequently, to define an extension of $s_\gamma$ to $X_0$, it suffices to define $s_\gamma(v_{\underline{\hat{\lambda_i^*}}})$ for all such $\lambda_i$.

Moreover, since $v_{\underline{\hat{\lambda_i^*}}}$ belongs to the $T$-orbit closure of $v_{\underline \lambda^*}$, to prove the proposition,
we shall consider
$\lim_{n\rightarrow\infty} s(t_n v_{\underline{ \lambda^*}})$ for any sequence of elements $t_n$ of $T$ such that 
$$
 \lim_{n\rightarrow\infty} t_n v_{\underline \lambda^*}=v_{\underline{\hat{\lambda_i^*}}}\quad
 \mbox{ with}\quad\underline{\lambda^*}^\perp=\underline{\hat\lambda_i^*}^\perp.
$$
In particular, $\lim_{n\rightarrow\infty}\lambda_i(t_n)=0$.

Let $t_n\in T$. We have: 
$s_\gamma(t_n v_{\underline{ \lambda^*}})=t_n s_\gamma(v_{\underline{ \lambda^*}})$ since $s_\gamma$ is $G$-invariant.
We thus have: 
$$
s_\gamma(t_n v_{\underline \lambda^*})= \big [\sum_k (\lambda_k^*-\gamma)(t_n)v_\gamma^{\lambda_k^*}\big ]\in V/T_{t_n v_{\underline {\lambda^*}}}X_0.
$$
Recall that $\gamma\in\oplus_k\mathbb Z \lambda_k^*$.

Let $(t_n)_n$ be any sequence  as above. 
If $\rho_i(\gamma)\leq 0$ then $\lim_{n\rightarrow\infty} s_\gamma(t_n v_{\underline \lambda^*})$ exists in $V$ and it is independent of the choice of $(t_n)_n$; the first assertion thus follows.

Let $\rho_i(\gamma)> 0$. By assumption, there exists $k\neq i$ such that $v_\gamma^{\lambda_k^*}\neq 0$.
It follows that
$\lim_{n\rightarrow\infty}(\lambda_k^*-\gamma)(t_n)v_\gamma^{\lambda_k^*}$ does not exist in $V$ and neither does $\lim_{n\rightarrow\infty} s_\gamma(t_n v_{\underline \lambda^*})$.
This concludes the proof of the second assertion.
\end{proof}

\subsection{Invariant infinitesimal deformations}

For local studies purposes (e.g. smoothness of $\mathrm{Hilb}^G_{\underline\lambda}$), we shall need to consider the
functor of \textsl{invariant infinitesimal deformations of $X_0$}
$$
{\mathcal Def}^G_{X_0}: \mathcal A\rightarrow (\mathrm{Sets})
$$
where $\mathcal A$ denotes the category of local Artinian $k$-algebras.
Given $A\in\mathcal A$, we define ${\mathcal Def}^G_{X_0}(A)$ as the set of Cartesian diagrams
$$
\begin{CD}
X_0@>>>  \mathcal X\\
@VVV @VVV\\
\mathrm{Spec}(k)@>>> \mathrm{Spec}(A)
\end{CD}
$$
with $\mathcal X\rightarrow \mathrm{Spec(A)}$ being a family of closed $G$-subschemes of $V$ of type $\Gamma$.

By Theorem~\ref{AB-representability} (\emph{see} also e.g. Section 2.2 in~\cite{Se}), the functor ${\mathcal Def}^G_{X_0}$ is representable by
the completion $\hat{\mathcal O}_{\mathrm{Hilb}^G_{\underline\lambda},[X_0]}$.

\subsection{Obstruction space}

First, let us recall (\emph{see} for instance \cite{Se}) the definition and the main properties of the obstruction space of a covariant functor $\mathcal F:\mathcal A\rightarrow (\mathrm{Sets})$.

Given $A\in\mathcal A$, let $\mathrm{Ex}(A,k)$ denote the $A$-module of isomorphism classes of $k$-extensions of $A$ by $k$.
An element $[(\tilde A,\varphi)]$ of $\mathrm{Ex}(A,k)$ is thus represented by an exact sequence
\begin{equation}\label{extension}
\begin{CD}
(\tilde{A},\varphi): 0@>>> k\varepsilon@>>> \tilde A @>\varphi>> A@>>> 0\quad\mbox{ with $\varepsilon^2=0$}.
\end{CD}
\end{equation}

\begin{definition}
A $k$-vector space $v(\mathcal F)$ is called an \textsl{obstruction space for the functor $\mathcal F$} if
for every object $A$ of $\mathcal A$ and every $\xi\in \mathcal F(A)$,
there exists a $k$-linear map
$$
\xi_v:\mathrm{Ex}(A,k)\rightarrow v(\mathcal F)
$$
with the following property:
$\ker(\xi_v)$ consists of the isomorphism classes of extensions $(\tilde A,\varphi)$
such that
$\xi\in\mathrm{Im} (\mathcal F(\tilde A)\stackrel{\mathcal F(\varphi)}{\rightarrow} \mathcal F(A))$.
\end{definition}

If the functor $\mathcal F$ has a trivial obstruction space then the functor $\mathcal F$ is smooth, i.e. $\mathcal F(p): \mathcal F(B)\rightarrow \mathcal F(A)$ is surjective for every surjection $p:B\rightarrow A$ of $\mathcal A$.

\subsubsection{}
Let $B_0$ be the coordinate ring of $X_0\subset V$.
Following Section 3.1.2 in~loc. cit, we recall the definition of the second cotangent module $T^2_{B_0}$ of $B_0$.

Take a presentation of the ideal $I\subset \mathrm{Sym}(V^*)$ of $X_0$ as $\mathrm {Sym}(V^*)-G$-module
$$
\begin{CD}
0 @>>>R @>>> F @>\phi>>I @ >>> 0
\end{CD}
$$
where $F$ is a finitely generated free $\mathrm {Sym}(V^*)$-module.

Consider the module $K\subset R$ of trivial relations: $K$ is generated by the relations
$\phi(e_i)e_j-\phi(e_j)e_i$ with $e_1,\ldots,e_n$ being a basis of the $\mathrm {Sym}(V^*)$-module $F$. We thus get the exact sequence of $B_0$-modules

$$
R/K\rightarrow F\otimes B_0\rightarrow I/I^2\rightarrow 0.
$$

Apply $\mathrm{Hom}_{B_0}(-,B_0)$ to the last exact sequence then
the second cotangent module $T^2_{B_0}$ of $B_0$ is defined by the exact sequence
$$
\mathrm{Hom}_{B_0}(I/I^2,B_0)\rightarrow \mathrm{Hom}_{B_0}(F\otimes B_0,B_0) \rightarrow\mathrm{Hom}_{B_0}(R/K,B_0)\rightarrow T^2_{B_0}\rightarrow 0.
$$
The second cotangent module of $B_0$ is independent of the presentation of $I$. Moreover, it is supported on the singular locus of $X_0$.

\subsubsection{}
As proved in~\cite{Se}, $T^2_{B_0}$ is an obstruction space for the functor of deformations of $X_0$.
From this, we shall derive an invariant version of this statement and give a representation theoretic characterization of $(T^2_{B_0})^G$.

For any weights $\lambda\neq\nu$ in $\Lambda^+$, denote
$$
V(\lambda)\cdot V(\nu)= V(\lambda)\otimes V(\nu)
\quad\mbox{ and} \quad
V(\lambda)\cdot V(\lambda)=\mathop{S^2} V(\lambda).
$$
For any $1\leq i,j\leq s$, let
$$
m_{i,j}: V(\lambda_i^*)\cdot V(\lambda_j^*)\rightarrow V(\lambda_i^*+\lambda_j^*)
$$
be the natural projection onto the Cartan component $V(\lambda_i^*+\lambda_j^*)$ in $V(\lambda_i^*)\cdot V(\lambda_j^*)$ and
$$
K_{i,j}=\ker (m_{ij})\simeq V(\lambda_i^*)\cdot V(\lambda_j^*)/V(\lambda_i^*+\lambda_j^*).
$$
Then the duals $K_{i,j}^*$ generate the ideal $I$ of $X_0$; \textsl{see}~\cite{KR} for instance.

In the following, we take the presentation of $I/I^2$ as $B_0-G$-modules given by
\begin{equation}\label{presentation}
R/K\rightarrow \oplus_{i,j}B_0\otimes K_{i,j}^*\rightarrow I/I^2\rightarrow 0;
\end{equation}
\textsl{see}~loc. cit..

Let $v^i$ denote the projection of $v\in V$ onto $V(\lambda_i^*)$.
The assignment
$$
V\rightarrow \oplus_{i,j} V(\lambda_i^*)\cdot V(\lambda_j^*),\quad
v\mapsto \sum_{i,j} v^i\cdot v_{\lambda_j^*}
$$
yields obviously a map
$$
f: V/\mathfrak g.v_{\underline\lambda^*}\rightarrow \oplus_{i,j} K_{i,j}.
$$

\begin{proposition}\label{obstruction-cotgt}
\smallbreak\noindent
{\rm(i)}\enspace
The $G$-invariant space $(T^2_{B_0})^G$ of the second cotangent module of $B_0$ is an obstruction space for ${\mathcal Def}^G_{X_0}$.
\smallbreak\noindent
{\rm(ii)}\enspace
There is an injection of $(T^2_{B_0})^G$ into the kernel of the map
$$
H^1(f):H^1(G_{v_{\underline\lambda^*}},V/\mathfrak g.v_{\underline\lambda^*})\rightarrow
\oplus_{i,j} H^1(G_{v_{\underline\lambda^*}},K_{i,j})
$$
induced by the map $f$.
\smallbreak\noindent
{\rm(iii)}\enspace
If the boundary $X_0\setminus G.v_{\underline\lambda^*}$ is of codimension at least $2$ in $X_0$ then the aforementioned injection is an isomorphism.
\end{proposition}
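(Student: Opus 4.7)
The strategy is to upgrade the Alexeev--Brion tangent space analysis behind Proposition \ref{tgtAB} from first-order to second-order, carrying out the whole construction $G$-equivariantly. For (i), I would invoke the classical construction of Sernesi \cite{Se}: the second cotangent module $T^2_{B_0}$ is an obstruction space for the functor of infinitesimal deformations of the affine scheme $X_0=\mathrm{Spec}(B_0)$. The presentation of $I$, the relations $R$, the module $K$ of trivial relations, and the connecting obstruction map are all manifestly $G$-equivariant. Because $G$ is reductive, the Reynolds operator gives a functorial splitting of $M^G \hookrightarrow M$ on every rational $G$-module $M$, so taking $G$-invariants is exact. Restricting the universal obstruction map to the subfunctor $\mathcal{Def}^G_{X_0}$ of $G$-invariant deformations then produces an obstruction map with values in $(T^2_{B_0})^G$.

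For (ii), I would apply $\mathrm{Hom}_{B_0}(-,B_0)^G$ to the presentation (\ref{presentation}) to obtain an exact sequence ending with $(T^2_{B_0})^G \to 0$. Since $G.v_{\underline\lambda}$ is open and dense in $X_0$, the restriction map gives a $G$-equivariant inclusion $B_0 \hookrightarrow k[G.v_{\underline\lambda}]=k[G/G_{v_{\underline\lambda}}]$, and Frobenius reciprocity $\mathrm{Hom}_G(M,k[G/G_{v_{\underline\lambda}}])\cong (M^*)^{G_{v_{\underline\lambda}}}$ then embeds each Hom-term into a space of $G_{v_{\underline\lambda}}$-invariants. To see the $H^1$ appear, consider the $G_{v_{\underline\lambda}}$-equivariant short exact sequence
$$
0 \to \mathfrak g \cdot v_{\underline\lambda} \to V \to V/\mathfrak g \cdot v_{\underline\lambda} \to 0
$$
together with its image under the map $f$ followed by projection onto $\oplus_{i,j} K_{i,j}^*$. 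Comparing the $\mathrm{Hom}_{B_0}(-,B_0)^G$ sequence computing $(T^2_{B_0})^G$ with the long exact sequences of $G_{v_{\underline\lambda}}$-cohomology attached to these two short exact sequences (linked by $f$) should place $(T^2_{B_0})^G$ inside the kernel of the induced map $H^1(G_{v_{\underline\lambda}}, V/\mathfrak g \cdot v_{\underline\lambda}) \to \oplus_{i,j} H^1(G_{v_{\underline\lambda}}, K_{i,j}^*)$.

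For (iii), under normality of $X_0$ together with the codimension $\geq 2$ boundary hypothesis, Hartogs' extension upgrades $B_0 \hookrightarrow k[G/G_{v_{\underline\lambda}}]$ to an equality, so each Frobenius-reciprocity inclusion used in (ii) becomes an isomorphism and the injection is promoted to an isomorphism. I expect the main obstacle to lie in (ii), namely the reconciliation between the algebraic presentation-based computation of $T^2_{B_0}$ and the group-cohomological description via $G_{v_{\underline\lambda}}$. The map $f$ is constructed precisely to record the multiplication in $\mathrm{Sym}(V^*)$ at $v_{\underline\lambda}$ responsible for the generators $K_{i,j}$ of $I$, so the compatibility is morally clear; the work is the diagram chase that matches the differentials on the two sides and explains why the obstructions land specifically in the kernel appearing in the statement rather than in the full $H^1(G_{v_{\underline\lambda}}, V/\mathfrak g \cdot v_{\underline\lambda})$.
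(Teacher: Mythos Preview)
Your strategy matches the paper's for all three parts: (i) is indeed the $G$-equivariant version of Sernesi's construction, and (iii) is indeed normality plus codimension-$2$ extension. For (ii) the paper also restricts to the open orbit and passes to $G_{v_{\underline\lambda}}$-cohomology, so the architecture is the same.

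There is one point where your setup diverges from the paper and would need adjustment. The short exact sequence $0 \to \mathfrak g\cdot v_{\underline\lambda} \to V \to V/\mathfrak g\cdot v_{\underline\lambda} \to 0$ is not the one that produces the relevant connecting map; its long exact sequence puts $H^1(G_{v_{\underline\lambda}},V/\mathfrak g\cdot v_{\underline\lambda})$ in the wrong slot. What the paper actually uses is the sheaf sequence obtained by dualising the presentation~(\ref{presentation}) and restricting to the open orbit,
\[
0 \longrightarrow \mathcal N_{\underline\lambda} \longrightarrow \bigoplus_{i,j} K_{i,j}^* \otimes \mathcal O_{G.v_{\underline\lambda}} \longrightarrow (R/K)^* \otimes \mathcal O_{G.v_{\underline\lambda}} \longrightarrow 0,
\]
which is short exact precisely because $T^2_{B_0}$ is supported on the singular locus and hence vanishes on $G.v_{\underline\lambda}$. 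The associated long exact sequence gives $H^0((R/K)^*)\to H^1(\mathcal N_{\underline\lambda})\to H^1(\oplus K_{i,j}^*)$, and after taking $G$-invariants one identifies $H^1(G.v_{\underline\lambda},\mathcal N_{\underline\lambda})^G$ with $H^1(G_{v_{\underline\lambda}},V/\mathfrak g\cdot v_{\underline\lambda})$ because $\mathcal N_{\underline\lambda}$ is the homogeneous bundle with fibre $V/\mathfrak g\cdot v_{\underline\lambda}$. The injection of $(T^2_{B_0})^G$ then comes from comparing cokernels along the restriction $B_0\hookrightarrow k[G.v_{\underline\lambda}]$. So your Frobenius-reciprocity viewpoint is equivalent to the paper's sheaf-cohomology viewpoint, but the exact sequence you want at the fibre is $0\to V/\mathfrak g\cdot v_{\underline\lambda}\to \oplus K_{i,j}^*\to \text{(fibre of }(R/K)^*)\to 0$, not the one you wrote, and the vanishing of $T^2$ on the smooth locus is the fact that makes it exact.
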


\begin{proof}
The proof of the first assertion is essentially the "invariant version" of the classical one.

Let $A$ be a local Artinian $k$-algebra and $\xi\in{\mathcal Def}^G_{X_0}( A)$.
By Proposition 3.1.12 in \cite{Se}, there exists a $k$-linear map $\hat{\xi}:\mathrm{Ex}(A,k)\rightarrow T^2_{B_0}$
satisfying the property of the definition of an obstruction space with $\mathcal F$ being the functor of deformations of $X_0$.
Accordingly, we only need to prove that the image of $\hat{\xi}$ is $G$-invariant.
This follows from the very definition of $\hat{\xi}$ that we recall now.

Let $I=(f_1,\ldots,f_n)\in \mathrm{Sym}(V^*)$ denote the ideal of $X_0$ then $B_0=\mathrm{Sym}(V^*)/I$.
By definition, the given element $\xi$ can be regarded as a $G$-stable ideal $J$ of $\mathrm{Sym}(V^*)\otimes_k A$
generated by elements $F_1,\ldots, F_n$ such that $f_i-F_i\in (m_A \mathrm{Sym}(V^*))$ where $m_A$ stands for the maximal ideal of $A$.
Thanks to the flatness of $\mathrm{Sym}(V^*)\otimes_k A/J$ over $A$, for every relation $\underline r=(r_1,\ldots, r_n)\in R$,
there exist $R_1,\ldots, R_n$ in $\mathrm{Sym}(V^*)\otimes_k A$ such that $r_i=R_i$ modulo $(m_A \mathrm{Sym}(V^*))$ and $\sum_i R_i F_i=0$.

Take $[(\tilde{A},\varphi)]\in \mathrm{Ex}(A,k)$ and an exact sequence (\ref{extension}) as a representative.

Let $\tilde{F}_i$ (resp. $\tilde{R}_i$) be a lifting of $F_i$ (resp. $R_i$) through $\varphi$ for $i=1,\ldots,n$.
Then $\sum_i \tilde{R}_i \tilde{F}_i$ belongs to $\mathrm{Sym}(V^*)\varepsilon$ and may be regarded as an element of $\mathrm{Sym}(V^*)$.
As shown in loc. cit., the assignment
$$
\underline r\mapsto \sum_i \tilde{R}_i \tilde{F}_i
$$
thus defines an element of $\mathrm{Hom}(R,B_0)$ and yields naturally an element of $T^2_{B_0}$: the element $\hat{\xi}$ under consideration.

(ii) Let $\mathcal N_{\underline\lambda}$ be the normal sheaf of $G.v_{\underline\lambda}$.
Note that $K_{i,j}\otimes_k\mathcal O_{X_0}$
(resp. $(R/K)^*\otimes_{B_0}\mathcal O_{X_0}$) is the dual sheaf $\mathrm{Hom}_{\mathcal O_{X_0}}(K_{i,j}^*\otimes_k \mathcal O_{X_0}, \mathcal O_{X_0})$ (resp. $\mathrm{Hom}_{\mathcal O_{X_0}}(R/K\otimes_{B_0} \mathcal O_{X_0}, \mathcal O_{X_0})$).

First note that since $X_0$ is normal (being spherical as observed in~Section 3.1), the sheaf $(R/K)^*\otimes_{B_0}\mathcal O_{X_0}$ is reflexive and in turn the restriction map
$H^0(X_0,(R/K)^*\otimes_{B_0}\mathcal O_{X_0})\rightarrow H^0(G.v_{\underline\lambda^*},(R/K)^*\otimes_{B_0}\mathcal O_{G.v_{\underline\lambda^*}})$
is injective.

From the presentation (\ref{presentation}) of $I/I^2$, we obtain the diagram:
$$
\begin{CD}
0                                                        @.               \\
@VVV                                                                               \\
H^0(X_0,(R/K)^*\otimes\mathcal O_{X_0})                     @>>>   T^2_{B_0} @>>> 0 \\
@VVV @VVV \\
H^0(G.v_{\underline\lambda^*},(R/K)^*\otimes\mathcal O_{G.v_{\underline\lambda^*}})@>>> H^1(G.v_{\underline\lambda^*}, \mathcal N_{\underline\lambda^*}) @>>> H^1(G.v_{\underline\lambda^*}, \oplus_{i,j} K_{i,j}\otimes\mathcal O_{G.v_{\underline\lambda^*}})
\end{CD}
$$
Note that the last row of the above diagram follows from the fact that the second cotangent module $T^2_{B_0}$ is supported on the singular locus of $X_0$, the latter being contained in $X_0\setminus G.v_{\underline\lambda^*}$.

The normal sheaf $\mathcal N_{\underline\lambda^*}$
being the $G$-linearized sheaf on $G/G_{v_{\underline\lambda^*}}$ associated to the $G_{v_{\underline\lambda^*}}$-module $V/\mathfrak g.v_{\underline\lambda^*}$, we have (\textsl{see} I.5 in~\cite{Jt}):
$$
H^1(G.v_{\underline\lambda^*},\mathcal N_{\underline\lambda^*})^G=H^1(G_{v_{\underline\lambda^*}},V/\mathfrak g.v_{\underline\lambda^*})
$$
and similarly
$$
H^1(G.v_{\underline\lambda^*},\oplus_{i,j} K_{i,j}\otimes\mathcal O_{G.v_{\underline\lambda^*}})^G=
\oplus_{i,j} H^1(G_{v_{\underline\lambda^*}},K_{i,j}).
$$
The second assertion thus follows.

(iii) Since the codimension $X_0\setminus G.v_{\underline\lambda^*}$ in $X_0$ is $\geq 2$, the vertical injection of
the above diagram becomes an isomorphism whence the last assertion of the proposition.

\end{proof}

\begin{corollary}\label{Schlessinger}
If $(T^2_{B_0})^G$ is trivial then $\mathrm{Hilb}_{\underline\lambda}^G$ is smooth.
\end{corollary}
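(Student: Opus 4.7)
The plan is to combine Proposition~\ref{obstruction-cotgt}(i) with the standard Schlessinger-type criterion recalled immediately after the definition of an obstruction space. No further computation is needed; the statement is a purely formal consequence of what has been set up.

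First I would invoke Proposition~\ref{obstruction-cotgt}(i) to identify $(T^2_{B_0})^G$ as an obstruction space for the functor $\mathcal{Def}^G_{X_0}$. Assuming this space vanishes, for every local artinian $k$-algebra $A$, every $\xi \in \mathcal{Def}^G_{X_0}(A)$, and every extension $(\tilde{A},\varphi)$ of $A$ by $k\varepsilon$, the associated $k$-linear map $\hat{\xi}\colon \mathrm{Ex}(A,k) \to (T^2_{B_0})^G$ has target zero; its kernel is therefore all of $\mathrm{Ex}(A,k)$. By the defining property of an obstruction space, this means $\xi$ lies in the image of $\mathcal{Def}^G_{X_0}(\tilde{A}) \to \mathcal{Def}^G_{X_0}(A)$, i.e.\ every invariant infinitesimal deformation of $X_0$ over $A$ lifts along $\varphi$. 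As the excerpt explicitly notes, this is precisely formal smoothness of the functor $\mathcal{Def}^G_{X_0}$.

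Second, I would transfer smoothness of the functor to smoothness of the scheme. By Theorem~\ref{AB-representability} together with the discussion preceding the obstruction-space subsection, $\mathcal{Def}^G_{X_0}$ is pro-represented by the complete local ring $\hat{\mathcal O}_{\mathrm{Hilb}^G_{\underline\lambda},[X_0]}$. Formal smoothness of a pro-representable deformation functor is equivalent to its representing complete local $k$-algebra being a formal power series ring over $k$, hence regular; and a noetherian local ring is regular iff its completion is. Therefore $\mathcal O_{\mathrm{Hilb}^G_{\underline\lambda},[X_0]}$ is regular, and working over $k=\mathbb C$ this means $\mathrm{Hilb}^G_{\underline\lambda}$ is smooth at $X_0$.

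The only place where one might anticipate an obstacle is in the interplay between the invariant version of the deformation functor and the pro-representability statement, but both ingredients have already been stated in the preceding subsections (Proposition~\ref{obstruction-cotgt}(i) and the remark following the definition of $\mathcal{Def}^G_{X_0}$). The label \emph{Schlessinger} on the corollary is apt: once the obstruction-space statement is in hand, the deduction is the classical implication ``trivial obstruction space $\Rightarrow$ formally smooth'' combined with pro-representability.
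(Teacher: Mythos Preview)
Your proposal is correct and follows exactly the same approach as the paper's proof, which is a terse two-sentence version of what you wrote: vanishing of the obstruction space $(T^2_{B_0})^G$ makes $\mathcal{Def}^G_{X_0}$ smooth, and pro-representability by $\hat{\mathcal O}_{\mathrm{Hilb}^G_{\underline\lambda},[X_0]}$ transfers this to smoothness of the scheme at $X_0$. You have simply unpacked the steps that the paper compresses into ``As recalled'' and ``the assertion follows.''
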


\begin{proof}
As already recalled, if $(T^2_{B_0})^G$ is trivial then the functor of invariant deformations of $X_0$ is smooth;
the latter being represented by $\hat{\mathcal O}_{\mathrm{Hilb}^G_{\underline\lambda^*},[X_0]}$, $\mathrm{Hilb}_{\underline\lambda^*}^G$ is smooth at $X_0$.
Since $X_0$ is the unique closed point of $\mathrm{Hilb}^G_{\underline\lambda^*}$ fixed by $T_\ad$ (Theorem~\ref{ABtoricaction}), the assertion follows.
\end{proof}

\section{Geometrical construction of wonderful varieties}

Throughout this section, $\mathscr{S}=(S^p,\Sigma,\mathbf A)$ denotes a spherical system of a simply connected semisimple group $G$.

Let $\Delta$ be the set of colors of $\mathscr{S}$.
Recall the definition of the diagonalizable subgroup $C\subset \mathbb G_m^\Delta$ as well
as the set of characters $\lambda_D=(\omega_D,\chi_D)$ (with $D\in\Delta$) of $T\times C$ -- both canonically associated to $\mathscr{S}$.
Set
$$
\tilde{G}=G\times C^\circ\quad \mbox{ and }\quad V=\oplus_{D\in\Delta} V\left(\lambda_D\right)^*.
$$

Note that by the definition of the weights $\chi_D$, the action of $C$ on $V$ is the diagonal action given by
$$
t. v_D= t_D v_D\quad\mbox{ for $v_D\in V(\lambda_D^*)$ and $t=(t_D)_D\in C$}.
$$

\subsection{Invariant Hilbert scheme attached to a spherical system}
As proved in Lemma~\ref{enlargeddefinitionofweights}, the weights $\lambda_D$ are linearly independent hence we may consider the relative invariant Hilbert scheme $\mathrm{Hilb}^{\tilde{G}}_{\underline\lambda}$ with 
$$
\underline\lambda=(\lambda_D)_{D\in\Delta}.
$$
In order to keep track of the datum $\mathscr{S}$,
let us denote the scheme $\mathrm{Hilb}^{\tilde{G}}_{\underline\lambda}$ rather by $\mathrm{Hilb(\mathscr{S})}$.
Further, put
$$
v_{\Delta^*}= v_{\underline\lambda^*}=\sum_{D\in \Delta}v_{\lambda_D^*}.
$$

\begin{remark}\label{saturatedsetting}
When the third datum of a spherical system is empty (\textsl{i.e.} $S\cap\Sigma=\emptyset$),
the invariant Hilbert scheme associated to the group $G$ itself and to $V$ as a $G$-module
falls in the case studied in~\cite{Js,BC1}.
Further, it maps naturally to $\Hilb (\mathscr{S})$.
\end{remark}

\begin{theorem}\label{alaKostant}
The $T_\ad$-module $(V/\tilde{\mathfrak g}.v_{\Delta^*})^{\tilde G_{v_{\Delta^*}}}$ is multiplicity-free.
Its weights are the opposites of the elements of some set $\Sigma(\Delta)$ such that
$$
\Sigma\subset\Sigma(\Delta)\subset\Sigma\cup \left\{\alpha+\alpha': \mbox{$\alpha,\alpha'$ adjacent (distinct) simple roots in $\Sigma$}\right\}.
$$
\end{theorem}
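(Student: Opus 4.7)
The plan is to translate the cohomological computation into a purely representation-theoretic one on the fiber $V/\tilde{\mathfrak g}.v_\Delta$, then construct an explicit $T_{\mathrm{ad}}$-weight vector for every $\sigma\in\Sigma$ and bound from above the $T_{\mathrm{ad}}$-weights that can appear.

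First, by Frobenius reciprocity applied to the $\tilde G$-linearized bundle on $\tilde G/\tilde G_{v_\Delta}$ attached to the $\tilde G_{v_\Delta}$-module $V/\tilde{\mathfrak g}.v_\Delta$, one has a canonical isomorphism
$$
H^0(\tilde G.v_\Delta,\, V/\tilde{\mathfrak g}.v_\Delta)^{\tilde G} \;\cong\; (V/\tilde{\mathfrak g}.v_\Delta)^{\tilde G_{v_\Delta}},
$$
and the $T_{\mathrm{ad}}$-grading on the left is induced by the twisted $\tilde G$-module structure on $V$ recalled in Section~\ref{Toricaction}. I would split the computation by first computing $(V/\tilde{\mathfrak g}.v_\Delta)^{\tilde{\mathfrak g}_{v_\Delta}}$ (this is carried out in detail in Appendix A) and afterwards passing to the component group $\tilde G_{v_\Delta}/\tilde G_{v_\Delta}^{\circ}$. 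The structure of the isotropy is governed by the fact that each $v_{\lambda_D}^{*}$ is a highest weight vector of $V(\lambda_D)^{*}$, so $\tilde{\mathfrak g}_{v_\Delta}$ contains the opposite unipotent radical of the standard parabolic $P\supset B$ with Levi determined by $S^p$, and its reductive part is the kernel in the Levi of all the characters $\lambda_D$.

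Second, I would construct, for every $\sigma\in\Sigma$, an invariant of $T_{\mathrm{ad}}$-weight $-\sigma$. The linear relation
$$
(\sigma,0) \;=\; \sum_{D\in\Delta} a_{\sigma,D}\,(\omega_D,\chi_D)
$$
from Lemma~\ref{enlargeddefinitionofweights} is the combinatorial heart of the construction: it guarantees that a suitable combination built from weight vectors sitting in the various factors $V(\lambda_D)^{*}$ has trivial $C^{\circ}$-character and $T$-weight exactly $-\sigma$ modulo the orbit tangent space. Concretely, for each spherical root $\sigma$ I would select in each $V(\lambda_D)^{*}$ a weight vector of $T$-weight $\lambda_D^{*}-a_{\sigma,D}\sigma$ (guaranteed to exist by the axioms of spherical systems and the classification of spherical roots in Definition~\ref{sphericalroots}), and then check by direct inspection that its class in $V/\tilde{\mathfrak g}.v_\Delta$ is annihilated by $\tilde{\mathfrak g}_{v_\Delta}$; this generalizes, factor by factor, the rank-$1$ construction carried out in~\cite{BC1} using the rank-$1$ classification~\cite{A}.

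Third, for the upper bound I would analyze what $T$-weights in $V$ can contribute to $\tilde{\mathfrak g}_{v_\Delta}$-invariants. The containment of the opposite unipotent radical of $P$ in $\tilde{\mathfrak g}_{v_\Delta}$ forces any invariant to be supported on weights of the form $\lambda_D^{*}-\beta$ with $\beta$ a non-negative sum of simple roots in $S\setminus S^p$; the Levi constraint and the twist defining $T_{\mathrm{ad}}$ translate this into the requirement that the resulting $T_{\mathrm{ad}}$-weight be the opposite of an element of the combinatorial set already appearing in the rank-$1$ and rank-$2$ classifications (\cite{W}), that is an element of $\Sigma$ or of $\{\alpha+\alpha' : \alpha,\alpha' \text{ adjacent simple roots in } \Sigma\}$. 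Multiplicity-freeness is then read off the fact that the $T_{\mathrm{ad}}$-weight of each such invariant uniquely determines all its components in the decomposition $V=\oplus_D V(\lambda_D)^{*}$, because each $V(\lambda_D)^{*}$ is $T$-multiplicity-free along a fixed $\mathbb Z S$-coset of weights. The main obstacle I anticipate is precisely the bookkeeping in this upper bound near pairs of adjacent simple roots both lying in $\Sigma$: there, the spherically-closed hypothesis on $\mathcal S$ does not formally eliminate the potential extra weight $-(\alpha+\alpha')$, which is exactly why the statement is phrased as a two-sided containment $\Sigma\subset\Sigma(\Delta)\subset\Sigma\cup\{\alpha+\alpha'\}$ rather than as an equality.
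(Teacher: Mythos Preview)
Your overall architecture---Frobenius reciprocity to reduce to $(V/\tilde{\mathfrak g}.v_\Delta)^{\tilde G_{v_\Delta}}$, then a lower bound by exhibiting invariants and an upper bound by constraining supports---is the same as the paper's. But two of the three steps, as written, would not go through.

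For the lower bound, the recipe ``select in each $V(\lambda_D)^{*}$ a weight vector of $T$-weight $\lambda_D^{*}-a_{\sigma,D}\sigma$'' is incorrect. A $T_{\mathrm{ad}}$-weight vector of weight $-\sigma$ has components in $V(\lambda_D)^{*}_{\lambda_D^{*}-\sigma}$, not in $V(\lambda_D)^{*}_{\lambda_D^{*}-a_{\sigma,D}\sigma}$; the coefficients $a_{\sigma,D}$ do not scale the shift in each factor (for negative $a_{\sigma,D}$ your weight would even lie above the highest weight). The linear relation in Lemma~\ref{enlargeddefinitionofweights} guarantees that $\sigma$ lies in $\mathbb Z\Delta$, which is what is needed for the $\tilde G_{v_\Delta}$-invariance (in particular for the $C^\circ$-part and the component group), but it does not produce the vector. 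The paper instead reduces to the single-root spherical system $(S',\{\gamma\},\emptyset)$ via Lemma~\ref{saturatedcase}, where the representative is built explicitly as an element of $(V(\lambda)/\mathfrak g.v_\lambda)^{G_{v_\lambda}}$ or as $X_{-\gamma}v_\lambda$ (Lemma~\ref{weightvector}); when the support of $\sigma$ meets $S\cap\Sigma$ the inclusion is checked directly.

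For the upper bound and for multiplicity-freeness there is a real gap. The claim that ``each $V(\lambda_D)^{*}$ is $T$-multiplicity-free along a fixed $\mathbb Z S$-coset'' is false in general (already the adjoint module has weight multiplicities), so multiplicity-freeness of the invariant space cannot be read off this way. In the paper the upper bound is not a formal weight constraint: the key technical input is Proposition~\ref{supportofGamma}, which says that if $\delta\in\supp(\gamma)$, $\gamma-\delta$ is not a root, and $[X_\beta v_\gamma]=0$ for all positive $\beta\neq\alpha$, then $(\gamma,\delta)\geq 0$ (with an orthogonality refinement). This, together with the properties of the weights $\lambda_D$ in Lemma~\ref{properties}, feeds a chain of root-system lemmas (Lemmas~\ref{lemma3}--\ref{lemma6} and the one following) showing that any $\gamma\in\Sigma(\Delta)$ whose support meets $S\cap\Sigma$ is forced to be a root equal to $\alpha$ or $\alpha+\alpha'$. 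Multiplicity-freeness then comes for free from this explicit description together with the multiplicity-free statement of Lemma~\ref{weightvector} for the complementary case. Your appeal to ``rank-$1$ and rank-$2$ classifications'' hides exactly this case analysis, which is where the work lies.
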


\begin{proof}
This stems from Proposition~\ref{descriptionofweights}.
\end{proof}

\begin{corollary}\label{tangentspace}
The tangent space $T_{X_0}\mathrm{Hilb(\mathscr{S})}$
is a multiplicity-free $T_\ad$-module; its $T_\ad$-weights are the opposites of the spherical roots of $\mathscr{S}$.
\end{corollary}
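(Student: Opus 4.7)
The plan is to combine Theorem~\ref{alaKostant} with the characterization of the tangent space given by Proposition~\ref{tgtAB}. By Proposition~\ref{tgtAB}(i), the tangent space $T_{X_0}\Hilb(\mathcal S)$ is canonically identified with $H^0(X_0, \mathcal N_{X_0/V})^{\tilde G}$, and this identification is $T_\ad$-equivariant for the action introduced in Section~\ref{Toricaction}. Restriction of sections to the open $\tilde G$-orbit $\tilde G.v_\Delta \subset X_0$ yields a $\tilde G \times T_\ad$-equivariant map
$$
H^0(X_0, \mathcal N_{X_0/V})^{\tilde G} \longrightarrow H^0(\tilde G.v_\Delta, \mathcal N_{\underline\lambda})^{\tilde G} \cong (V/\tilde{\mathfrak g}.v_\Delta)^{\tilde G_{v_\Delta}},
$$
which is injective by Proposition~\ref{tgtAB}(ii); whenever the hypotheses of Proposition~\ref{tgtAB}(iii) apply (normality of $X_0$, boundary of codimension at least two) this injection is in fact an isomorphism.

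By Theorem~\ref{alaKostant}, the target is a multiplicity-free $T_\ad$-module whose weights are the opposites of the elements of some set $\Sigma(\Delta)$ with $\Sigma \subset \Sigma(\Delta) \subset \Sigma \cup \left\{\alpha+\alpha':\mbox{ $\alpha,\alpha'$ adjacent simple roots in $\Sigma$}\right\}$. Consequently $T_{X_0}\Hilb(\mathcal S)$ is itself a multiplicity-free $T_\ad$-module and its set of weights is contained in $-\Sigma(\Delta)$. This already delivers the multiplicity-freeness asserted in the corollary together with one containment of weights.

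To identify the weights exactly with $-\Sigma$, I would argue in two stages. First, for each $\sigma \in \Sigma$, I would produce an explicit $\tilde G$-invariant global section of $\mathcal N_{X_0/V}$ of $T_\ad$-weight $-\sigma$: since $\Sigma \subset \Sigma(\Delta)$ the required weight vector already lives in $(V/\tilde{\mathfrak g}.v_\Delta)^{\tilde G_{v_\Delta}}$, and a direct check using the horospherical structure of $X_0$ and the explicit form of $v_\Delta$ shows that this weight vector extends across the boundary. Second, I would exclude the potential extra weights $-(\alpha+\alpha')$ with $\alpha+\alpha'\notin\Sigma$ by invoking the spherically closed hypothesis on $\mathcal S$ together with the defining property of $\Hilb(\mathcal S)$ as the open locus parameterizing non-degenerate subschemes: an infinitesimal deformation of $X_0$ of such a weight would modify the combinatorial data of $\mathcal S$ in a way forbidden by spherical closedness.

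The hard part is precisely the second stage. Constructing an obstruction to extending a weight vector of weight $-(\alpha+\alpha')$ from $\tilde G.v_\Delta$ to a $\tilde G$-invariant global section on $X_0$ requires a careful analysis of the boundary divisors of $X_0$, and it is exactly the spherically closed hypothesis on $\mathcal S$ that supplies the rigidity needed to rule out these spurious weights and thereby match the set of $T_\ad$-weights with $-\Sigma$.
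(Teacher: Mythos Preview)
Your overall strategy---identify $T_{X_0}\Hilb(\mathcal S)$ with a $T_\ad$-submodule of $(V/\tilde{\mathfrak g}.v_\Delta)^{\tilde G_{v_\Delta}}$ via Proposition~\ref{tgtAB}, invoke Theorem~\ref{alaKostant} for multiplicity-freeness and the bound $\Sigma(\Delta)$, then determine exactly which weight vectors extend from $\tilde G.v_\Delta$ to global sections on $X_0$---is the same as the paper's. The paper, too, is sketchy at this last step and points to~\cite{PVS} for a thorough treatment of section extension.

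There is, however, a genuine misattribution in your second stage. The exclusion of the spurious weights $-(\alpha+\alpha')$ with $\alpha,\alpha'$ adjacent simple roots in $\Sigma$ has nothing to do with the spherically closed hypothesis. Spherical closedness is consumed already in Theorem~\ref{alaKostant} (see Remark~\ref{remark}(1)): without it, loose roots $\sigma$ would contribute $2\sigma$ rather than $\sigma$ to $\Sigma(\Delta)$. What actually kills the weight $-(\alpha+\alpha')$ is the explicit description of the codimension~$1$ boundary of $X_0$. The paper observes that the $\tilde G$-orbit closures of codimension~$1$ in $X_0$ are exactly the closures of the orbits of
\[
v_{D_\alpha}=\sum_{D\in\Delta,\,D\neq D_\alpha} v_{\lambda_D},\qquad \alpha\in\Sigma\cap S,\ D_\alpha\in\{D_\alpha^+,D_\alpha^-\}.
\]
In particular the boundary has codimension~$1$ precisely when $\Sigma\cap S\neq\emptyset$, so Proposition~\ref{tgtAB}(iii) does \emph{not} apply in the interesting case, and the gap between $\Sigma$ and $\Sigma(\Delta)$ is accounted for by non-extendability across these specific divisors. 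Concretely, for $\gamma\in\Sigma(\Delta)$ one forms the section $s_\gamma(x)=[v_\gamma]_x$ on the open orbit and checks directly, using the above description of the codimension~$1$ strata, that $s_\gamma$ extends to $X_0$ if and only if $\gamma\in\Sigma$. This single extension analysis simultaneously gives the inclusion $-\Sigma\subset$ weights and excludes the extras; your division into two separate stages with different mechanisms is unnecessary, and the combinatorial ``rigidity from spherical closedness'' you invoke is not the operative reason.
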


\begin{proof}
Recall the notation set up in Section~\ref{tangentspace}.
Let $\gamma\in \Sigma(\Delta)$.
Thanks to Proposition~\ref{tgtAB} and Theorem~\ref{alaKostant}, it suffices to prove that
every section $s_\gamma\in H^0(G.v_{\underline \lambda},\mathcal N_{\overline{G.v_{\underline \lambda}}/V^*})$  can be extended to $\overline{G.v_{\underline \lambda}}\subset V^*$ if and only if  $\gamma\in \Sigma$.
This desired assertion will be obtained by applying Proposition~\ref{extension-section}.

Let $\lambda_D$ be such that $\underline{\hat\lambda_D}^\perp=\underline \lambda^\perp$.
Then by the properties of the weights $\lambda_D$ (see Appendix~\ref{propertiesofthemonoid}),
$(\lambda_D,\beta)\neq 0$ for $\beta\in S$ only if $\beta\in \Sigma$.
It follows that $(\lambda_D,\gamma)\neq 0$ only if $\supp \gamma\cap \Sigma\neq\emptyset$.

Moreover, by Axiom $(\mathbf A 1)$ of spherical systems (Definition~\ref{def-sph-syst}), we have:
$c(D,\gamma)\leq 0$ for every $\gamma\in \Sigma\setminus S$.
Consequently, $\gamma$ satisfies the condition of the first assertion of Proposition~\ref{extension-section}, and in turn, $s_\gamma$ extends to a section of $X_0$.

Let $\gamma=\alpha+\alpha'\in \Sigma(\Delta)\setminus \Sigma$.
Note that $\gamma$ is a positive root of $G$ since $(\alpha,\alpha')<0$ by Theorem~\ref{alaKostant}.
To be definite, let thus $(\gamma,\alpha)>0$.
By Lemma~\ref{consequence-descriptionofweights}, $v_\gamma$ can be chosen in $V(\lambda^+)\oplus V(\lambda^-)$ with $\lambda^\pm$ being the weights in $\Delta$ which are non-orthogonal to $\alpha$.
Since $(\gamma, \alpha)>0$, we can assume: $c(\lambda^+,\gamma)>0$.
Finally, since $\alpha,\alpha'\in \Sigma$, there are at least three weights $\lambda_D$ non-orthogonal to $\gamma$ (Lemma~\ref{properties}).
The weights $\gamma$ and $\lambda^+$ thus satisfy the conditions of Proposition~\ref{extension-section}-(ii). This statement implies that $s_\gamma$ can not extend to $X_0$.

Let $\gamma=\alpha\in S\cap \Sigma$.
As explained in the first paragraph of the proof of Proposition~\ref{descriptionofweights}, we can choose $v_\alpha$ to be equal to $X_{-\alpha}v_{\lambda_D}$ for some $\lambda_D$.
Further, $c(D,\alpha)=1$ by Axiom $(\mathbf A 1)$ of spherical systems.
We can thus follow the arguing of the proof of Proposition~\ref{extension-section}-(i) to conclude that $s_\alpha$ extends to $X_0$.
\end{proof}

\begin{theorem}\label{obstructionspace}
The obstruction space $(T^2_{X_0})^{\tilde{G}}$ for the functor ${\mathcal Def}^{\tilde{G}}_{X_0}$ of invariant infinitesimal deformations of $X_0$ is trivial.
\end{theorem}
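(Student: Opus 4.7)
The plan is to apply Proposition~\ref{obstruction-cotgt} to reduce the statement to an injectivity assertion on a map between first cohomology groups, and then to handle that assertion using the computations of Appendix~A. Before anything else I would verify that $X_0 = X_0(\mathcal S)$ meets the hypotheses of part~(iii) of the proposition: namely, that $X_0$ is normal and that its boundary $X_0 \setminus \tilde G.v_\Delta$ has codimension at least $2$. Because $v_\Delta = \sum_D v^*_{\lambda_D}$ is a sum of highest weight vectors, its isotropy contains the unipotent radical of a Borel of $\tilde G$, so $X_0$ is horospherical. Its coordinate ring is then the multiplicity-free module supported on the submonoid of $\Lambda^+ \times \Xi(C^\circ)$ generated by the $\lambda_D$; the linear independence of the $(\omega_D,\chi_D)$ established in Lemma~\ref{enlargeddefinitionofweights} together with saturation of this monoid will deliver normality, and the codimension bound will follow from the horospherical geometry (no codimension-one $\tilde G$-orbit of $X_0$ can parameterize a facet of the cone spanned by the $\lambda_D$, since all facets correspond to dropping a highest weight vector from $v_\Delta$).

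Granted these properties, Proposition~\ref{obstruction-cotgt}(iii) identifies $(T^2_{B_0})^{\tilde G}$ with the kernel of the map
$$
f_* : H^1\bigl(\tilde G_{v_\Delta},\,V/\tilde{\mathfrak g}.v_\Delta\bigr) \longrightarrow
\bigoplus_{D,D' \in \Delta} H^1\bigl(\tilde G_{v_\Delta},\,K_{D,D'}^*\bigr),
$$
where $K_{D,D'}$ is the kernel of the Cartan product $V(\lambda_D^*)\cdot V(\lambda_{D'}^*) \twoheadrightarrow V(\lambda_D^*+\lambda_{D'}^*)$. The proof of the theorem therefore reduces to showing this $f_*$ is injective.

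Next I would invoke Appendix~A, which is advertised to compute $H^1(\tilde{\mathfrak g}_{v_\Delta}, V/\tilde{\mathfrak g}.v_\Delta)$ (in parallel with the $H^0$-computation used for Theorem~\ref{alaKostant}). This should yield an explicit, multiplicity-free $T_\ad$-module with a finite list of weights of combinatorial origin, each supported on an adjacent pair of simple roots or on the support of a spherical root. Once such a list is in hand, the injectivity of $f_*$ becomes a weight-by-weight check: for every generator $[\eta_\gamma]$ of the source of weight $-\gamma$, one must exhibit a pair $(D,D')$ for which the image of $\eta_\gamma$ under the map $V \to V(\lambda_D^*)\cdot V(\lambda_{D'}^*)$ defines a nontrivial class in $H^1(\tilde G_{v_\Delta}, K_{D,D'}^*)$.

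The hard part will be this last verification. It amounts to a fine analysis of the Cartan kernels $K_{D,D'}$ under restriction to the horospherical subgroup $\tilde G_{v_\Delta}$, coupled with the axioms (A1)--(A3), ($\Sigma 1$), ($\Sigma 2$), (S) of spherical systems and the spherical closure hypothesis on $\mathcal S$. Concretely, for each $\gamma$ appearing in $H^1$, one chooses $D, D'$ so that the weight $\lambda_D^* + \lambda_{D'}^*$ lies on the "right facet" determined by $\gamma$ (roughly, so that $a_{\gamma,D}$ or $a_{\gamma,D'}$ is forced to detect $\gamma$); a nonvanishing cocycle in $K_{D,D'}^*$ then arises because the Cartan component cannot accommodate a lift invariant under $\tilde G_{v_\Delta}$. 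I expect the analysis to split into cases according to the type of $\gamma$ in Definition~\ref{sphericalroots}, with the most delicate cases being the spherical roots of type $\mathsf B_n$ (where spherical closure, as in Example~\ref{looseroot}, is essential) and the extra weights $\alpha+\alpha'$ from $\Sigma(\Delta)\setminus\Sigma$, which must be shown to actually map nontrivially under $f_*$ even though they do not come from spherical roots of $\mathcal S$.
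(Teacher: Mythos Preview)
Your overall strategy matches the paper's: reduce via Proposition~\ref{obstruction-cotgt} to the injectivity of $f_*$ on $H^1$, then use the explicit description of $H^1(\tilde{\mathfrak g}_{v_\Delta},V/\tilde{\mathfrak g}.v_\Delta)$ from Appendix~A (Theorem~\ref{LiealaKostant}) to check injectivity weight by weight. The paper carries this out in Appendix~\ref{injectivity} (Proposition~\ref{triviality}), and your sketch of the ``hard part'' is a fair description of what happens there.

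There is, however, a genuine error in your first paragraph. You claim that the boundary $X_0\setminus\tilde G.v_\Delta$ has codimension at least $2$, arguing that dropping a highest weight vector cannot give a codimension-one orbit. This is false in general: as noted in the proof of Corollary~\ref{tangentspace}, the codimension-one $\tilde G$-orbit closures in $X_0$ are exactly the closures of the orbits of $v_{D_\alpha}=\sum_{D\neq D_\alpha}v_{\lambda_D}$ for $\alpha\in\Sigma\cap S$. So whenever $\Sigma\cap S\neq\emptyset$ (equivalently, whenever the third datum $\mathsf A$ of $\mathcal S$ is nonempty), the boundary \emph{does} have codimension one, and part~(iii) of Proposition~\ref{obstruction-cotgt} is unavailable.

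Fortunately this does not damage the argument. The paper uses only part~(ii), which gives an \emph{injection} of $(T^2_{B_0})^{\tilde G}$ into $\ker f_*$ without any hypothesis on $X_0$. Hence proving $f_*$ injective already forces $(T^2_{B_0})^{\tilde G}=0$, and Corollary~\ref{Schlessinger} finishes. So you should simply drop the normality/codimension verification and invoke part~(ii) directly; the rest of your plan then coincides with the paper's proof.
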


\begin{proof}
Thanks to Proposition~\ref{obstruction-cotgt}, it suffices to prove that the map $H^1(f)$ displayed therein is injective.
The latter is obtained in Appendix~\ref{injectivity}.
 \end{proof}

\begin{corollary}\label{Hilb}
The scheme $\Hilb(\mathscr{S})$ is isomorphic to an affine space where the
adjoint torus of $G$ acts linearly with
weights equal to the opposites of the spherical roots of $\mathscr{S}$.
\end{corollary}

\begin{proof}
Thanks to Theorem~\ref{obstructionspace}, ${\mathcal Def}^{\tilde{G}}_{X_0}$ is trivial  and in turn $\Hilb(\mathscr{S})$ is smooth by Corollary~\ref{Schlessinger}.
Being also connected (by Theorem~\ref{AB-representability}), $\Hilb(\mathscr{S})$ is irreducible hence consists of a single $T_\ad$-orbit closure.
To sum up, $\Hilb(\mathscr{S})$ is a smooth toric $T_\ad$-variety which is affine (Theorem~\ref{AB-representability}) and has with a single fixed point (Theorem~\ref{ABtoricaction}) hence it is an affine space by Luna's Slice Theorem.
\end{proof}

\begin{corollary}\label{universalfamily}
Let $X$ be a wonderful $G$-variety with total coordinate ring $R(X)$ and spherical system $\mathscr S_X$.
If $\mathscr{S}_X$ is spherically closed then the quotient morphism
$$
\pi: \mathrm{Spec}\,R(X) \rightarrow \mathrm{Spec}\,(R(X)^G)
$$
can be regarded as the universal family of $\Hilb(\mathscr{S}_X)$.
\end{corollary}

\begin{proof}
By Proposition~\ref{Brionfamily}-(ii) together with Proposition~\ref{colors-sphericalsystems-wful}, the fibers over closed points of $\mathrm{Spec}\,R(X)^G$ can be regarded as closed points of $\Hilb(\mathscr{S}_X)$.
This, together with the universal property fulfilled by $\Hilb(\mathscr{S}_X)$, implies the existence of a morphism 
$\iota:\mathrm{Spec}\,R(X)^G\rightarrow\Hilb(\mathscr{S}_X)$. The fibers of $\pi$ being pairwise distinct, the morphism $\iota$ is injective.
But, $\Hilb(\mathscr{S}_X)$ and $\mathrm{Spec}\,R(X)^G$ both equal the affine space of dimension $r$ with $r$ being the number of spherical roots of $X$; see Theorem~\ref{Hilb} and Proposition~\ref{Brionfamily}-(i) respectively.
 The morphism $\iota$ is thus an isomorphism.
\end{proof}

\subsection{The wonderful variety attached to a spherical system}\label{proofofconjecture}
Let $r$ denote the number of spherical roots of $\mathscr{S}$ and
let $X_1$ be a closed point of $\mathrm{Hilb}(\mathscr{S})$ whose $T_\ad$-orbit is dense.

Consider the (universal) family over $\mathrm{Hilb}(\mathscr{S})\simeq\mathbb A^r$
$$
\begin{CD}
\mathcal X^{\mathrm{univ}}  @>{\pi}>> \mathbb A^r .
\end{CD}
$$
Then the coordinate ring $R(\mathscr{S})$ of $\mathcal X^\mathrm{univ}\subset V\times\mathbb A^r$
is isomorphic as a $(\tilde{G}\times T_\ad)$-algebra to
$$
R(\mathscr{S})=\oplus_{\lambda\in\mathbb N\Delta} k[X_1]_{\lambda}e^{\lambda}\otimes k[e^\sigma:\sigma\in\Sigma];
$$
\textsl{see} the recalls made in Section~\ref{Toricaction}.
Here $e^\lambda$ with $\lambda\in\mathbb N\Delta$ stands for the character $e^\omega$ in $k[T]$ whenever $\lambda=(\omega,\chi)$.

According to our previous observations (made before Theorem~\ref{AB-representability}), the $\tilde{G}$-variety $X_1$ is spherical and so is  
the $(\tilde{G}\times T)$-variety $\mathcal X^\mathrm{univ}$. 
In the next statement, we describe, in particular, the spherical roots of $\mathcal X^\mathrm{univ}$;
one may consult Appendix~\ref{Basicmaterialonsphericalvarieties} for terminology and related notation used below (e.g. the definition of the spherical roots of a spherical, non necessarily wonderful, $G$-variety).

\begin{proposition}\label{towardstheinvariants}
\smallbreak\noindent{\rm(i)}\enspace
The set of spherical roots of the $(\tilde{G}\times T)$-variety $\mathcal X^\mathrm{univ}$ coincides with the set $\Sigma$.
\smallbreak\noindent{\rm(ii)}\enspace
The colors of $\mathcal X^\mathrm{univ}$ are indexed by the set $\Delta$.
More specifically,  a color $D$ of $\mathcal X^\mathrm{univ}$ has an equation in $R(\mathscr{S})$ which
can be identified to the weight vector $v_{\lambda_D}e^{\lambda_D}\otimes 1$ of $R(\mathscr{S})$.
\end{proposition}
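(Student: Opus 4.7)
The plan is to deduce both assertions from the explicit bigraded description of $R(\mathcal S)$ recalled just above, combined with the intrinsic characterizations of colors and spherical roots of affine spherical varieties collected in Subsection~\ref{invariantssphericalvarieties}. As a preliminary step I pin down the weight monoid of the affine spherical $\tilde G \times T$-variety $\mathcal X^{\mathrm{univ}}$. Using that each component $k[X_1]_\lambda$ with $\lambda\in\mathbb N\Delta$ is a single copy of the simple $\tilde G$-module of highest weight $\lambda$ (by the sphericity of $X_1$ and the fact that all fibers of $\pi$ share the same $\tilde G$-module structure), the decomposition
$$
R(\mathcal S) \cong \bigoplus_{\lambda\in\mathbb N\Delta} k[X_1]_\lambda\, e^\lambda \otimes k[e^\sigma:\sigma\in\Sigma]
$$
shows that $R(\mathcal S)$ is multiplicity-free as a $\tilde G \times T$-module, with weight monoid freely generated by the pairs $(\lambda_D, \omega_D)$, $D\in\Delta$, arising from $v_{\lambda_D} e^{\lambda_D}\otimes 1$, and by the pairs $(0, \sigma)$, $\sigma\in\Sigma$, arising from $1\otimes e^\sigma$; in particular the $e^\sigma$'s are $\tilde G$-invariants whereas each $v_{\lambda_D} e^{\lambda_D}\otimes 1$ carries the non-trivial $\tilde G$-weight $\lambda_D$.

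For (ii), each $f_D := v_{\lambda_D} e^{\lambda_D}\otimes 1$ is a $B$-semi-invariant of $R(\mathcal S)$, and the freeness of the weight monoid forces any factorization of $f_D$ into $B$-semi-invariants to be trivial; hence $f_D$ is prime, and since $\lambda_D\neq 0$ its divisor is $B$-stable but not $\tilde G$-stable, i.e.\ a color. Conversely any color is the divisor of some prime $B$-semi-invariant that is not $\tilde G$-invariant; the free weight monoid structure forces such a prime to be proportional to some $f_D$, the only other indecomposable $B$-semi-invariants being the $\tilde G$-invariant $e^\sigma$'s, whose divisors are $\tilde G$-stable and hence not colors. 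This yields the bijection $\Delta\leftrightarrow\{\text{colors of } \mathcal X^{\mathrm{univ}}\}$ together with the claimed form of the color equations.

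For (i), I apply the characterization from Subsection~\ref{invariantssphericalvarieties}: the set $\Sigma_{\mathcal X^{\mathrm{univ}}}$ is the basis of the saturation of the monoid $\mathcal M_{\mathcal X^{\mathrm{univ}}}$ generated by differences $\lambda+\mu-\nu$ whenever $V(\nu)\subset V(\lambda)V(\mu)$ in $R(\mathcal S)$. One inclusion is immediate from the freeness of the weight monoid: any such difference has zero $\tilde G$-component and its $T$-component lies in $\mathbb Z\Sigma$, so $\Sigma_{\mathcal X^{\mathrm{univ}}}\subset\mathbb N\Sigma$. For the reverse inclusion $\Sigma\subset\Sigma_{\mathcal X^{\mathrm{univ}}}$ I need to exhibit each $\sigma\in\Sigma$ as an actual tail: using the identity $(\sigma, 0) = \sum_D a_{\sigma,D}(\omega_D,\chi_D)$ of Lemma~\ref{enlargeddefinitionofweights}, an appropriate product of $f_D$'s in $R(\mathcal S)$ should decompose as a Cartan component plus a lower-order term proportional to $e^\sigma$ times a weight vector of the corresponding $\tilde G$-weight, thereby realizing $\sigma$ as an indecomposable element of $\mathcal M_{\mathcal X^{\mathrm{univ}}}$.

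The main obstacle I anticipate is precisely this last realization step: producing, for each $\sigma\in\Sigma$, an explicit multiplication relation in $R(\mathcal S)$ that witnesses $\sigma$ as an indecomposable tail. This relies combinatorially on the compatibility axiom (S) and Lemma~\ref{enlargeddefinitionofweights}, and geometrically on tracking how the deformation from the horospherical fiber $X_0$ over the origin of $\mathbb A^r$ to the generic fiber $X_1$ introduces lower-weight summands into products of highest-weight vectors.
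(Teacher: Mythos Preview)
Your proof of (i) has the gap you yourself identify: you never establish that each $\sigma\in\Sigma$ actually occurs as a tail in the multiplication of $R(\mathcal S)$. This is not a technicality that can be filled in by unwinding Lemma~\ref{enlargeddefinitionofweights}; it amounts to knowing that the generic fiber $X_1$ is genuinely non-horospherical in each $\sigma$-direction, which is precisely the content you are trying to prove. The paper avoids this circularity entirely: it invokes Proposition~2.13 of~\cite{AB}, which says that the root monoid of the universal family over $\Hilb^G_{\underline\lambda}$ is generated by the $T_\ad$-weights of the base. Since Corollary~\ref{Hilb} has already identified $\Hilb(\mathcal S)$ with $\mathbb A^r$ carrying exactly the weights $-\Sigma$, part~(i) follows at once. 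Your inclusion $\Sigma_{\mathcal X^{\mathrm{univ}}}\subset\mathbb N\Sigma$ is correct and easy, but the reverse inclusion really does need this external input; you should cite~\cite{AB} rather than attempt a direct computation.

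For (ii) your approach is more elementary than the paper's (which appeals to the structural results of~\cite{Ca} and~\cite{Lo} determining the colors from the spherical roots and the weight monoid), and the spirit is right, but the step ``freeness of the weight monoid forces $f_D$ to be prime'' is not justified as written: indecomposability among $B$-eigenvectors does not by itself give irreducibility in the full ring without knowing, say, that $R(\mathcal S)$ is normal with polynomial $U$-invariants. This is true here, but you have not argued it. Either supply that argument or, as the paper does, use that once (i) is known the colors are determined by $\Sigma$ together with the weight monoid $\mathbb N\{(\lambda_D,\varepsilon_D)\}$, and the cited references then pin down their equations as the $v_{\lambda_D}e^{\lambda_D}\otimes 1$.
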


\begin{proof}
First, by Proposition 2.13 in~\cite{AB} (along with Remark~\ref{connectionwithmoduli}), the monoid $\mathcal M_{X_1}$ of $X_1$ is the
weight monoid of the $T_\ad$-orbit closure of $X_1$. 
Thanks to Corollary~\ref{Hilb}, the submonoid $\mathcal M_{X_1}$ of $\Xi(T)$ is spanned by $\Sigma$.

Finally by the characterization of the spherical roots recalled in~Appendix~\ref{Basicmaterialonsphericalvarieties}
and the fact that $\mathscr{S}$ is spherically closed, the spherical roots of $X_1$ (hence of $\mathcal X^{\mathrm{univ}}$) are exactly the elements of the given set $\Sigma$.

The second assertion is obtained following the procedure stated in Section~\ref{Camus'lemma}, which
enables to derive the set of colors of any affine spherical variety from the spherical roots and the weight monoid of this variety.
In particular, we get that the $(B\times C^\circ\times T)$-weights of the colors of $\mathcal X^\mathrm{univ}$ are given by the $(\lambda_D,\varepsilon_D)$; $\varepsilon_D$ being the $D$-component character of $T$.
\end{proof}

Let $\mathring{\mathcal X}^{\mathrm{univ}}$ be the open subset of $\mathcal X^\mathrm{univ}$ defined as follows
$$
\mathring{\mathcal X}^{\mathrm{univ}}=\{x\in\mathcal X^{\mathrm{univ}}: \tilde{G}.x \mbox{ is open in } \pi^{-1}\pi(x)\}.
$$

\begin{lemma}\label{goodopenset}
\smallbreak\noindent{\rm(i)}\enspace
The elements of $\mathring{\mathcal X}^{\mathrm{univ}}$
are the elements of $\mathcal X^{\mathrm{univ}}\subset V\times \mathbb A^r$
which project non-trivially onto each simple $\tilde{G}$-submodule $V(\lambda_D)^*$ of $V$.
\smallbreak\noindent{\rm(ii)}\enspace
We have:
$\mathring{\mathcal X}^{\mathrm{univ}}=\tilde{G}.(\mathcal X^{\mathrm{univ}}\setminus\cup_{\Delta} D)$.
\end{lemma}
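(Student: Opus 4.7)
The plan is to combine the spherical structure of each fiber of $\pi$ with the description of the colors of $\mathcal{X}^{\mathrm{univ}}$ obtained in Proposition~\ref{towardstheinvariants}. For every closed point $s\in\mathrm{Hilb}(\mathcal{S})$, the fiber $F=\pi^{-1}(s)$ has coordinate ring isomorphic, as a $\tilde{G}$-module, to the multiplicity-free algebra $k[X_0]$, whose weight monoid is the free monoid $\mathbb{N}\Delta$; hence $F$ is a spherical $\tilde{G}$-variety admitting a unique open $\tilde{G}$-orbit $F^\circ$. Denote by $p_D\colon V\to V(\lambda_D)^*$ the $\tilde{G}$-equivariant projection.

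For the forward implication of (i), equivariance of $p_D$ yields the claim directly: if $p_D(x)=0$ for some $D$ at some $x\in F^\circ$, then $p_D$ vanishes on $\tilde{G}.x=F^\circ$, and hence on all of $F$ by density, contradicting the non-triviality of the $V(\lambda_D)$-isotypic component of $k[F]$. For the converse, I analyse proper $\tilde{G}$-orbit closures $Y\subsetneq F$. The ideal $I(Y)\subset k[F]$ is $\tilde{G}$-stable, hence a direct sum $\bigoplus_{\mu\in S(Y)} V(\mu)$ with $S(Y)\subset\mathbb{N}\Delta$. Because $I(Y)$ is an ideal and the Cartan component $V(\mu+\nu)$ occurs with multiplicity one in every product $V(\mu)\cdot V(\nu)$, the set $\mathbb{N}(Y):=\mathbb{N}\Delta\setminus S(Y)$ is simultaneously a sub-monoid of $\mathbb{N}\Delta$ and downward-closed in it; in other words $\mathbb{N}(Y)$ is a face of the free monoid $\mathbb{N}\Delta$, and must equal $\mathbb{N}\Delta'$ for some proper subset $\Delta'\subsetneq\Delta$. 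For any $D\in\Delta\setminus\Delta'$ we have $V(\lambda_D)\subset I(Y)$, so every element of $V(\lambda_D)\subset V^*$ vanishes on $Y$, i.e.\ $p_D(Y)=0$. Consequently, any $x\in F$ at which no projection $p_D$ vanishes avoids every proper orbit closure and must lie in $F^\circ$.

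For part (ii), I first observe that on each fiber $F$ the restrictions $v_{\lambda_D}|_F$ and $f_D|_F$ are both $B$-semi-invariants of weight $\lambda_D$ in the multiplicity-free ring $k[F]$, hence proportional with non-zero constant, since no fiber can be contained in a color ($B$-stable but not $\tilde{G}$-stable divisors cannot be vertical for the $\tilde{G}$-invariant map $\pi$). Thus $\mathcal{X}^{\mathrm{univ}}\setminus\bigcup_{\Delta} D=\{x:v_{\lambda_D}(x)\neq 0\text{ for all }D\}$. Since $v_{\lambda_D}(x)\neq 0$ implies $p_D(x)\neq 0$, part (i) gives $\mathcal{X}^{\mathrm{univ}}\setminus\bigcup_{\Delta} D\subseteq\mathring{\mathcal{X}}^{\mathrm{univ}}$, and the latter being $\tilde{G}$-stable yields one inclusion. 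For the reverse inclusion, the linear independence of the $\lambda_D$ (Lemma~\ref{enlargeddefinitionofweights}) together with the multiplicity-freeness of $k[F]$ ensures that the $B$-semi-invariants of $k[F]$ are multiplicatively generated by the $v_{\lambda_D}|_F$: the (unique up to scalar) semi-invariant of weight $\sum_D n_D\lambda_D$ is precisely $\prod_D v_{\lambda_D}^{n_D}$. Hence the open $B$-orbit of $F$ coincides with $\{y\in F:v_{\lambda_D}(y)\neq 0\text{ for all }D\}$, a $B$-stable dense open subset of $F^\circ$. Therefore any $x\in\mathring{\mathcal{X}}^{\mathrm{univ}}$ admits a $\tilde{G}$-translate lying in $\mathcal{X}^{\mathrm{univ}}\setminus\bigcup_{\Delta} D$, which concludes the proof.

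The main subtlety is the face-of-monoid argument in the converse direction of (i): identifying proper $\tilde{G}$-orbit closures of $F$ with proper faces of the free monoid $\mathbb{N}\Delta$ is exactly what translates the elementary linear condition on the projections $p_D$ into a characterization of $F^\circ$.
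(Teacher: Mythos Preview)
Your argument is correct and follows the same line as the paper's, though you supply considerably more detail. For (i) the paper simply says the assertion ``follows readily from the definition of $\mathrm{Hilb}(\mathcal S)$''; your face-of-the-free-monoid argument is a clean way to unpack this, and the key steps (downward-closedness of $\mathbb N(Y)$ from the ideal property, sub-monoid property from $k[Y]$ being a domain) are sound.

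For (ii) the paper proceeds slightly more directly: it invokes the explicit identification $f_D=v_{\lambda_D}e^{\lambda_D}\otimes 1$ from Proposition~\ref{towardstheinvariants}, so that the zero loci of $f_D$ and of $v_{\lambda_D}$ on $\mathcal X^{\mathrm{univ}}$ coincide without further argument. Your route, deducing $f_D|_F\propto v_{\lambda_D}|_F$ from multiplicity-freeness of $k[F]$, is fine, but the justification that the proportionality constant is non-zero (``colors cannot be vertical'') is a little loose: horizontality of a prime divisor does not by itself rule out that it contains an isolated fiber. The clean fix is either to cite Proposition~\ref{towardstheinvariants} directly, or to note that $(\pi_*\mathcal O_{\mathcal X^{\mathrm{univ}}})^U_{\lambda_D}$ is a free rank-one $k[\mathbb A^r]$-module (this is the invertible-sheaf condition defining a family), so any non-zero global section, and in particular $f_D$, restricts non-trivially to every fiber. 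With that adjustment your proof and the paper's are essentially identical.
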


\begin{proof}
Thanks to Theorem~\ref{AB-representability}, every fiber $\pi^{-1}\pi(x)$ is a non-degenerate spherical $\tilde{G}$-subvariety of $V$.
The first assertion of the lemma thus follows.

From the previous proposition,
the equation $f_D$ in $R(\mathscr{S})$ of a color $D\in\Delta$
can be identified to the weight vector $v_{\lambda_D}e^{\lambda_D}\otimes 1$ in $R(\mathscr{S})$.
Take $y=(v,\underline s)\in V\times \mathbb A^r$.
If $f_D(y)\neq 0$ then clearly $v_{\lambda_D}(v)\neq 0$.
Conversely, if $y$ projects non-trivially onto each $V(\lambda_D)^*$ then there exists $g\in \tilde{G}$ such that the projection of $g.v$ on each 
weight space $V(\lambda_D^*)_{\lambda_D^*}$ is not trivial.
Equivalently, $v_{\lambda_D}(g.v)\neq 0$ for each $D\in\Delta$.
The second assertion of the lemma follows.
\end{proof}

Recall that the dominant weights $\lambda_D$ are linearly independent (\textsl{see} Lemma~\ref{enlargeddefinitionofweights}).
The group $GL(V)^{\tilde{G}}$ is thus isomorphic to the algebraic torus $\mathbb G_m^\Delta$ of dimension equal to the cardinality of $\Delta$. 
The $(T_\ad\times C)$-action on $V$ via $t\mapsto (\lambda_D^*(t))_\Delta$ yields naturally a $\mathbb G_m^\Delta$-action  on $V$: the componentwise multiplication.

Note that the open set $\mathring{\mathcal X}^{\mathrm{univ}}$ is $\mathbb G_m^\Delta$-stable.

\begin{theorem}\label{existence}
The quotient 
$$
X(\mathscr{S})=\mathring{\mathcal X}^{\mathrm{univ}}/\mathbb G_m^\Delta
$$
exists and is geometric;
it is a wonderful $G$-variety with spherical system $\mathscr{S}$.
\end{theorem}

\begin{proof}
By means of Lemma~\ref{goodopenset}-(i), we get that $\mathbb G_m^\Delta$ acts freely on $\mathring{\mathcal X}^{\mathrm{univ}}$ whence the existence of a geometric quotient $\mathring{\mathcal X}^{\mathrm{univ}}/\mathbb G_m^\Delta$; \textsl{see} e.g. in~\cite{MF}.

To prove that $X(\mathscr S)$ is a wonderful $G$-variety, we shall apply the criterion recalled in Proposition~\ref{w'fulcriterion}.

First, note that the $G$-variety $X(\mathscr S)$ is spherical since so is  the  $(G\times\mathbb G_m^\Delta)$-variety $\mathring{\mathcal X}^{\mathrm{univ}}$.

Moreover, by the very definition of the action of $T_\ad$ on $\Hilb(\mathscr{S})$ along with Theorem~\ref{ABtoricaction}, $X(\mathscr S)$ has a unique closed $G$-orbit, namely $(G\times\mathbb G_m^\Delta).v_{\Delta^*}/\mathbb G_m^\Delta$.

The inclusion $\mathring{\mathcal X}^{\mathrm{univ}}\subset\oplus_{D\in\Delta} \left( V(\lambda_D)^*\setminus\{0\}\right)\times\mathbb A^r$ (Lemma~\ref{goodopenset}-(i)) gives raise to a $\tilde{G}$-equivariant morphism
$$
\begin{CD}
\mathring{\mathcal X}^{\mathrm{univ}}@>{\varphi}>>\oplus_{D\in\Delta} \left( V(\lambda_D)^*\setminus\{0\}\right).
\end{CD}
$$

The morphisms $\varphi$ and $\pi$ yield a finite (hence proper) morphism from $X(\mathscr S)$ to the multiprojective space $\prod_\Delta \mathbb P\left(V(\omega_D)^*\right)$. The variety $X(\mathscr S)$ is thus complete.

Fix a color $D$ of $\mathring{\mathcal X}^{\mathrm{univ}}$.
From Proposition~\ref{towardstheinvariants}, we know that $D$ is contained in the preimage under $\varphi$ of the hyperplane $(v_{\lambda_D}=0)$.
Further, as observed in the proof of Lemma~\ref{goodopenset},
the $\tilde{G}$-orbits of $(v_{\lambda_D}=0)$ project trivially onto $V(\lambda_D^*)$.
Consequently, $D$ contains no $\tilde{G}$-orbit.
The $(G\times\mathbb G_m^\Delta)$-variety $\mathring{\mathcal X}^\mathrm{univ}$ is thus toroidal and so is $\mathring{\mathcal X}^\mathrm{univ}/\mathbb G_m^\Delta$ as a $G$-variety.

Let $P$ be the parabolic subgroup of $G\times\mathbb G_m^\Delta$ stabilizing the colors $D$ in $\Delta$ and $P^u$ be its unipotent radical. 
Thanks to the Local Structure Theorem (\textsl{see} Theorem 2.3 as well as Proposition 2.4.1 in~\cite{Br97}) applied to the toroidal spherical $(G\times\mathbb G_m^\Delta)$-variety $\mathring{\mathcal X}^\mathrm{univ}$, there exists an affine toric $(T\times\mathbb G_m^\Delta)$-variety $W$ within $\mathring{\mathcal X}^{\mathrm{univ}}\setminus\cup_\Delta D$ such that the natural morphism
$$
P^u\times W\rightarrow \mathring{\mathcal X}^{\mathrm{univ}}\setminus\cup_\Delta D
$$
is an isomorphism.
Note that $\mathring{\mathcal X}^{\mathrm{univ}}\setminus\cup_\Delta D=\mathcal X^{\mathrm{univ}}\setminus\cup_\Delta D$ by Lemma~\ref{goodopenset}-(ii).

Considering the quotient morphism $\mathcal X^{\mathrm{univ}}\rightarrow \mathrm{Spec}(R(\mathscr{S})^G)\simeq \mathbb A^r$, we get
that the $(G\times\mathbb G_m^\Delta)$-stable prime divisors of $\mathcal X^{\mathrm{univ}}$ correspond to the $\mathbb G_m^\Delta$-stable prime divisors of $\mathbb A^r$.
All these divisors are thus principal;
their equations are given by the $e^\sigma$'s with $\sigma\in\Sigma$.
Since $\mathcal X^{\mathrm{univ}}$ is a spherical $(G\times\mathbb G_m^\Delta)$-variety, the colors and
the $(G\times\mathbb G_m^\Delta)$-stable prime divisors of this variety generate its divisor class group; \textsl{see} Section 5.1 in~\cite{Br97}.
Consequently, $\mathcal X^{\mathrm{univ}}\setminus\cup_\Delta D$ is factorial and in turn so are $P^u\times W$ and $W$. 

The affine variety $W$ being toric and factorial, it is smooth (\textsl{see} for instance Proposition 2.4.6 in \cite{CLS}) hence so is $\mathring{\mathcal X}^{\mathrm{univ}}$.
As already observed $\mathbb G_m^\Delta$ acts freely on  $\mathring{\mathcal X}^{\mathrm{univ}}$, the variety
$\mathring{\mathcal X}^{\mathrm{univ}}/\mathbb G_m^\Delta$ is thus smooth also; see~\cite{MF}.

In the remainder of the proof, set $X=X(\mathscr{S})$.
Recall the definition of the spherical system $\mathscr{S}_X$ of $X$ from Section~\ref{wondervar}.
As shown, the closed $G$-orbit of $X$ is $(G\times\mathbb G_m^\Delta).v_\Delta/\mathbb G_m^\Delta$.
By definition, $S^p_X$ is thus given by the simple roots of $G$ which are orthogonal to every $\omega_D$, $D\in\Delta$, that is $S^p$ itself thanks to Lemma~\ref{obviousppties}. 
From the above discussion along with Proposition~\ref{towardstheinvariants}, it follows readily that $\Sigma_X=\Sigma$ and $\mathcal D_X$ can be identified with $\Delta$. Specifically the spherical system of $X$ is $\mathscr{S}$.

\end{proof}

\begin{corollary}\label{conjecture}
Luna's conjecture is true: to any spherical system $\mathscr{S}$ of an adjoint semisimple algebraic group $G$, there corresponds a unique (up to $G$-isomorphism) wonderful $G$-variety whose spherical system is $\mathscr{S}$.
\end{corollary}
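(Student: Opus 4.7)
The plan is to assemble three ingredients that are now available: Theorem~\ref{existence} (existence for spherically closed spherical systems), Luna's reduction procedure from Section~6 of~\cite{Lu01} (which reduces the general classification problem to the spherically closed case), and Losev's uniqueness theorem~\cite{Lo}.

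First I would dispose of uniqueness by a direct appeal to~\cite{Lo}: at most one wonderful $G$-variety can realize any given spherical system, without any spherically-closedness assumption. This settles uniqueness in full generality and leaves only the existence part to prove.

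For existence, I would begin with the spherically closed case. Theorem~\ref{existence} constructs a wonderful variety $X(\mathcal S)$ whose spherical system is the given spherically closed $\mathcal S$. Although Section~4 is formulated for a simply connected semisimple group, the triple $(S^p,\Sigma,\mathsf A)$ is insensitive to the isogeny type of $G$, so $\mathcal S$ may be lifted to a spherical system of the simply connected cover and fed into Theorem~\ref{existence}; the resulting action factors through the adjoint quotient, because the $B\times C^\circ$-weights $\lambda_D=(\omega_D,\chi_D)$ appearing in the ambient module $V$ have $\omega_D$ trivial on the center (they descend to characters of the adjoint group used to define spherical systems), so the center of $G_{\mathrm{sc}}$ acts trivially on $\mathring{\mathcal X}^{\mathrm{univ}}/\mathbb G_m^\Delta$. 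This yields the conjecture for spherically closed systems and adjoint $G$.

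Next, for an arbitrary spherical system $\mathcal S$ of an adjoint semisimple $G$, I would invoke Luna's reduction: one associates to $\mathcal S$ its spherical closure $\bar{\mathcal S}$ (a spherically closed spherical system, possibly of a quotient of $G$ by a finite subgroup) and obtains, via an explicit cover/quotient construction, a bijection between wonderful varieties realizing $\mathcal S$ and wonderful varieties realizing $\bar{\mathcal S}$. Applying the previous step to $\bar{\mathcal S}$ produces the required wonderful variety for $\bar{\mathcal S}$, and undoing the reduction yields a wonderful $G$-variety with spherical system $\mathcal S$. Combined with Losev's uniqueness, this proves the conjecture in full.

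The main obstacle is not technical but bookkeeping: one must check that the geometric construction of Theorem~\ref{existence} is compatible with Luna's combinatorial reduction, in particular that the spherical system of the variety produced after reversing the reduction is the original $\mathcal S$ and not $\bar{\mathcal S}$, and that the passage between simply connected and adjoint forms of $G$ does not alter the wonderful compactification. Both verifications are routine given the dictionary between wonderful varieties and spherical systems recalled in Section~\ref{wondervar} and Section~2, together with Proposition~\ref{towardstheinvariants}.
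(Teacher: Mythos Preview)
Your proposal is correct, and for existence it coincides with the paper's argument: both reduce to spherically closed systems via Section~6 of~\cite{Lu01} and then invoke Theorem~\ref{existence}. For uniqueness, however, you take a genuinely different route. You cite Losev's theorem~\cite{Lo} directly, which is valid and economical. The paper instead derives uniqueness from its own machinery: given a wonderful variety $X$ with spherical system $\mathcal S_X$, it identifies the quotient morphism $\mathrm{Spec}\,R(X)\to\mathrm{Spec}(R(X)^G)$ with the universal family over $\Hilb(\mathcal S_X)$ (combining Theorem~\ref{Hilb} with the recalls in Subsection~\ref{Coxring}); this forces $H^\sharp\cong H'^\sharp$, hence $H\cong H'$, for any two wonderful varieties $X,X'$ sharing the same spherical system. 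The paper's approach makes uniqueness a byproduct of the Hilbert-scheme construction itself rather than an imported result, in line with its stated goal of giving a geometrical, classification-free proof; your approach is shorter but depends on Losev's independent argument.

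One small correction in your descent paragraph: the claim that ``$\omega_D$ is trivial on the center'' is false in general (fundamental weights typically restrict nontrivially to $Z(G_{\mathrm{sc}})$). The correct reason the $G_{\mathrm{sc}}$-action on $\mathring{\mathcal X}^{\mathrm{univ}}/\mathbb G_m^\Delta$ factors through the adjoint group is that $Z(G_{\mathrm{sc}})$ acts on each irreducible summand $V(\lambda_D)^*$ by a scalar, hence through the torus $\mathbb G_m^\Delta=GL(V)^{\tilde G}$; quotienting by $\mathbb G_m^\Delta$ therefore kills the $Z(G_{\mathrm{sc}})$-action. Your conclusion stands, only the justification needed adjusting.
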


\begin{proof}
It suffices to consider spherical systems which are spherically closed; \emph{see} Section 6 in~\cite{Lu01}.
The existence part is given by the previous theorem whereas the uniqueness part stems for Corollary~\ref{universalfamily}.

\end{proof}

\appendix

\section{Basic material on spherical varieties}\label{Basicmaterialonsphericalvarieties}
Let $G$ be a connected reductive algebraic group.
We fix a Borel subgroup $B$of $G$ and a maximal torus $T\subset B$.
The corresponding set of simple roots (resp. dominant weights) is denoted by $S$ (resp. $\Lambda^+$).
We choose a scalar product $(\cdot,\cdot)$, on the real characters $\Xi(T)\otimes_\mathbb Z \mathbb R$ of $T$, invariant under the Weyl group of $G$ relative to $T$.

\subsection{Birational invariants}\label{invariantssphericalvarieties}

Some of the invariants associated to a wonderful $G$-variety can also be assigned to any spherical $G$-variety $X$.

In particular, one defines similarly \textsl{the set $\mathcal D_X$ of colors of $X$} as the set of $B$-stable not $G$-stable prime divisors of $X$.

To introduce the set of spherical roots of an arbitrary spherical $G$-variety, we set up further notation.
Let $\Lambda(X)$ be the set of $B$-weights of the $B$-eigenvectors $\mathbb C(X)^{(B)}$ of the function field $\mathbb C(X)$ of $X$.
Set $\Lambda(X)^*:=\mathrm{Hom}(\Lambda(X),\mathbb Q)$.
Let $\mathcal V_X$ denote the set of $G$-stable prime divisors of $X$.

Define
$$
\rho_X: \mathcal D_X\cup\mathcal V_X\rightarrow \Lambda(X)^*
$$
by setting
$$
\rho_X(D)(\gamma)=v_D(f_\gamma)
$$
with
$v_D$ being the $B$-invariant normalised valuation of $D$ and $f_\gamma$ the $B$-eigenvector of $\mathbb C(X)$ of weight $\gamma$ (uniquely determined up to a scalar since $X$ has a dense $B$-orbit).

In case $X$ is wonderful, the elements of $\mathcal V_X$ are given by the boundary divisors $D_i$ of $X$.
Set $X_B=X\setminus \cup_\mathcal{D_X} D$.
Then $\mathbb C(X)^{(B)}$ (resp. $\Lambda(X)$) is freely generated by the equation $f_i\in k[X_B]$ (resp. spherical roots $\sigma_i$ of $X$) of $X_B\cap D_i$.
And, the $\rho_X (D_i)$'s form a basis of the abelian group $\Lambda(X)^*$ dual to the basis given by the $\sigma_i$'s.

From this perspective, one may define \textsl{the set $\Sigma_X$ of spherical roots} of an arbitrary spherical variety $X$ as follows.
Let $V_X$ be the set of $G$-invariant discrete $\mathbb Q$-valued valuations of $\mathbb C(X)$. 
As shown in~\cite{Br97}, one may regard $V_X$ in $\Lambda(X)^*$ and $V_X$ is a simplicial convex cone in $\Lambda(X)^*$.
The set $\Sigma_X$ is thus defined as the set of primitive linearly independent elements of $\Lambda(X)$ such that $V_X$ is the dual cone of $-\Sigma_X$.

\subsection{Affine case}
Throughout this subsection, $X$ denotes an affine spherical $G$-variety.

\subsubsection{}\label{affinesphericitycriterion}
There exists another characterization of $\Sigma_X$, as follows.

Let $k[X]$ be the algebra of regular functions of $X$.
Let $\Lambda^+(X)$ denote \textsl{the weight monoid of $X$},
that is the submonoid of $\Lambda^+$ given by the highest weights of $k[X]$.
Since $X$ is affine, $\Lambda(X)=\mathbb Z\Lambda(X)^+$. Further, we have the following characterization
\begin{equation}\label{charweightmonoid}
\Lambda^+(X)=\{\gamma\in\Lambda(X): \rho_X(D)(\gamma)\geq 0 \mbox{ for every } D\in\mathcal D_X\cup\mathcal V_X\}.
\end{equation}

First, let us recall the following criterion (\textsl{see}~\cite{Br10} for a survey).
An affine $G$-variety $X$ is spherical if and only if $\Lambda(X)\cap \mathbb Q_+\Lambda^+(X)=\Lambda^+(X)$ and
$k[X]$ is multiplicity-free as a $G$-module, that is, every simple $G$-module occurs in $k[X]$ at most once.

Let $\lambda,\mu$ and $\nu$ be in $\Lambda^+(X)$ and such that $V(\nu)\subset V(\lambda)V(\mu)$ - the product being taken in $k[X]$.
Let $\mathcal M_X$ be the monoid generated by the $\lambda+\mu-\nu$'s in $\Lambda^+\otimes_{\mathbb Z} \mathbb R$.
Then $\mathbb Z\mathcal M_X\cap \mathbb Q_+\mathcal M_X$ is freely generated by a subset of spherical roots of $G$ which is proportional
to the set $\Sigma_X$; \emph{see} Theorem 1.3 in~\cite{K96} and Section 4 in~\cite{Br97}.

\subsubsection{}\label{Camus'lemma}
Let us recall from Section 10.1 in~\cite{Ca}, how one can derive the weights of the colors of $X$ from $\Lambda^+(X)$ along with $\Sigma_X$. 

Like in case of a wonderful variety (\emph{see}~\ref{colorsvswonderful} in the body text), one defines the sets $\mathcal D_X(\alpha)$ for each $\alpha\in S$.
Let $\mathcal D_X(\Sigma_X\cap S)$ be the union of the $\mathcal D_X(\alpha)$'s where $\alpha\in\Sigma_X$.
By \cite{L97}, the set $\mathcal D_X$ is entirely determined by  $\mathcal D_X(\Sigma_X\cap S)$.

Specifically, in case $\alpha\not\in\Sigma_X$, $\mathcal D_X(\alpha)$ contains at most one element and $\mathcal D_X(\alpha)\cap\mathcal D_X(\beta)$ is not empty only if $\alpha+\beta$ is a spherical root of $X$ of type $\mathsf A_1\times\mathsf A_1$.
The $B$-weight of an element of one such $\mathcal D_X(\alpha)$ is thus well-determined.
 
Further, the sets $\Sigma_X$ and $\mathcal D_X(\Sigma_X\cap S)$ satisfy the axioms $\mathbf A$ of a spherical system with $\mathcal D_X(\Sigma_X\cap S)$ playing the role of $\mathbf A$, $\mathcal D_X(\alpha)$ that of $\mathbf A(\alpha)$ and $\rho_X$ that of the pairing $c$.
In particular, if $\alpha\in\Sigma_X$ then $\mathcal D_X(\alpha)$ contains exactly two colors.
Further, by Axiom $(\mathbf A2)$, the whole set $\rho_X (\mathcal D_X(\alpha))$ is determined only by one of its elements.

Finally, let $S^p_X$ be the set of simple roots $\alpha$ such that $\mathcal D_X(\alpha)$ is empty.
Then the triple $(S^p_X,\Sigma_X,\mathcal D_X(\Sigma_X\cap S)$ is a spherical system of $G$; note that this assertion is the part of Theorem 2 in~\cite{Lu01} which is valid for any group $G$.
In particular, by means of Axiom $(S)$ along with $(\ref{charweightmonoid})$, one can extract the colors (hence their $B$-weights) not contained in $\mathcal D_X(\Sigma_X\cap S)$.

To sum up, to obtain $\rho_X (\mathcal D_X)$ from the only data $\Lambda^+(X)$ and $\Sigma_X$, it remains to characterize only one of the elements of each $\rho_X (\mathcal D_X(\alpha))$ for $\alpha\in \Sigma_X\cap S$. This step is achieved by the following statement.

\begin{lemma}[Lemma~10.1 in~\cite{Ca}]
Let $\alpha\in S\cap\Sigma_X$.
Then one of the elements of $\rho_X (\mathcal D_X(\alpha))$ defines a face of $\Lambda(X)^+$.
\end{lemma}

As a sake of convenience, let us recall Camus' proof of this lemma.

\begin{proof}
We proceed by contradiction.
Then by the characterization (\ref{charweightmonoid}) of $\Lambda^+(X)$, we have
$\Lambda^+(X)=\{\gamma\in\Lambda(X): \rho_X(D)(\gamma)\geq 0 \mbox{ for every }D\in\mathcal V_X\cup\mathcal D_X\setminus\mathcal D_X(\alpha) \}$.
On the other hand, by Axiom $(\mathbf A 2)$, we have $\rho_X(D)(\alpha)\leq 0$ for every color $D$ of $X$ not in $\mathcal D_X(\alpha)$.
Further, since $\alpha\in\Sigma_X$, the inequality $\rho_X(D)(\alpha)\leq 0$ holds for every $D\in\mathcal V_X$; recall the definition of $\Sigma_X$ given in the previous subsection.
Accordingly, we get $\gamma\in \Lambda^+(X)$ but this is absurd since $-\alpha$ is not dominant. The lemma follows.
\end{proof}

\section{Combinatorics related to spherical systems}\label{combinatorics}
Given any linear combination $\beta=\sum n_\alpha \alpha$ of simple roots, the support $\supp\beta$ of $\beta$ is defined as the set of simple roots $\alpha$ such that $n_\alpha\neq 0$.

\subsection{Spherical roots: list and properties}\label{listsphericalroots}

In the table below, we recollect from~\cite{W} the list of spherical roots.

A spherical root is by definition a spherical root of some group $G$.
The latter appears in loc. cit. to be either a positive root of $G$ or a sum of two positive roots of $G$.
The support of any given spherical root $\sigma$ thus defines a root system whose type is referred below as the type of $\sigma$;
we label the simple roots of this root system according to Bourbaki notation. Note that these simple roots are also simple roots of $G$.

\bigbreak

\begin{center}
\begin{tabular}{l | l}
\firsthline
Type of $\sigma$ & $\sigma$\\
\hline
$\mathsf A_1\times\mathsf A_1$ &$\alpha_1+\alpha_1'$ \vspace{0.1cm}\\
\hline
$\mathsf A_r$ &$\alpha_1+\ldots+\alpha_r$\\
& $2\alpha_1$ ($r=1$)\\
&$\alpha_1+2\alpha_2+\alpha_3$\vspace{0.1cm} ($r=3$)\\
\hline
$\mathsf B_r$, $r\geq2$&$\alpha_1+\ldots+\alpha_r$\\
&$2\alpha_1+\ldots+2\alpha_r$\\
&$\alpha_1+2\alpha_2+3\alpha_3$ ($r=3$)\vspace{0.1cm}\\
\hline\vspace{0.1cm}
$\mathsf C_r$, $r\geq3$&$\alpha_1+2\alpha_2+\ldots+2\alpha_{r-1}+\alpha_r$\vspace{0.1cm}\\
\hline
$\mathsf D_r$, $r\geq4$&$2\alpha_1+\ldots+2\alpha_{r-2}+\alpha_{r-1}+\alpha_r$\vspace{0.1cm}\\
\hline
$\mathsf F_4$&$\alpha_1+2\alpha_2+3\alpha_3+2\alpha_4$\vspace{0.1cm}\\
\hline
$\mathsf G_2$&$\alpha_1+\alpha_2$\\
&$2\alpha_1+\alpha_2$\\
&$4\alpha_1+2\alpha_2$\\
\lasthline
\end{tabular}
\end{center}

\begin{lemma}
\label{pptiesphericalroots}
\smallbreak\noindent{\rm(i)}\enspace
Let $\sigma$ be a spherical root of $G$ and $\alpha$ be in the support of $\sigma$.
Then $(\sigma,\alpha)\geq 0$ unless $\alpha=\alpha_1$ and $\sigma=\alpha_1+\alpha_2$ is of type $\mathsf G_2$.
Further, if $(\sigma,\alpha)>0$ then $\sigma-\alpha$ is a root of $G$.
\smallbreak\noindent{\rm(ii)}\enspace
Let $\sigma$ and $\alpha$ be two spherical roots of the same spherical system of $G$.
Suppose $\alpha\in\supp\gamma$. Then either $\sigma$ is of type $\mathsf A_1$, $\mathsf C_r$ or $\mathsf G_2$. 
In the latter case, $\alpha$ and $\sigma$ are as in (i).
\end{lemma}

\begin{proof}
The statement (i) follows readily  from the list of spherical roots.
To prove (ii), we shall make use of Lemma~\ref{compatibilitycondition}.
Let $(S^p, \Sigma,\mathbf A)$ be the spherical system under consideration.
First note that if $\sigma\not\in S$ then the axioms $(\mathbf A2)$ and $(\mathbf A3)$ imply that $(\sigma,\alpha)\leq 0$ and in turn $(\sigma,\alpha)=0$ by (i), unless $\sigma$ is of type $\mathsf G_2$ as stated therein.
Thanks to the compatibility condition fulfilled by  $(S^p,\alpha)$, we get that the simple roots adjacent to $\alpha$ do not belong to $S^p$.
We now apply the compatibility condition fulfilled by  $(S^p,\sigma)$.
We thus obtain  that  the support of $\sigma$ is of cardinality at most $2$ or $\sigma$ is of type $\mathsf C_r$.
We get also that,  in type $\mathsf A_2$ and $\mathsf G_2$ with $\sigma$ not as in (i), the root $\alpha$ belongs to $S^p$ since $(\sigma,\alpha)=0$.
Therefore these cases have to be ruled out. The lemma follows.
\end{proof}

\begin{definition}[\cite{BP}, 1.1.6]
Let $S^p$ be a subset of $S$ and $\sigma$ be a spherical root of $G$.
The couple $(S^p,\sigma)$ is \textsl{compatible} if
$$
S^{pp}(\sigma)\subset S^p\subset S^p(\sigma)
$$
where $S^p(\sigma)$ is the set of simple roots orthogonal to $\sigma$ and
$S^{pp}(\sigma)$ is one of the following sets
\smallbreak\noindent
{\rm -}\enspace $S^p(\sigma)\cap\supp(\sigma)\setminus\{\alpha_r\}$ if $\sigma=\alpha_1+\ldots+\alpha_r$ is of type $\mathsf B_r$,
\smallbreak\noindent
{\rm -}\enspace $S^p(\sigma)\cap\supp(\sigma)\setminus\{\alpha_1\}$ if $\sigma$ is of type $\mathsf C$,
\smallbreak\noindent
{\rm -}\enspace $S^p(\sigma)\cap\supp(\sigma)$ otherwise.
\end{definition}

\begin{lemma}[loc.cit.]\label{compatibilitycondition}
Let $S^p$ be a subset of $S$ and $\sigma$ a spherical root of $G$.
Then the data $S^p$ and $\sigma$ are those associated to a wonderful $G$-variety if and only if $(S^p,\sigma)$ is compatible.
\end{lemma}

By Axiom $(S)$ in the definition of a spherical system, we shall refer to the aforementioned compatibility condition as well as Axiom $(S)$.

\begin{definition}
\smallbreak\noindent
{\rm(i)}\enspace
A spherical root $\sigma$ of $G$ is \textsl{loose} if $\sigma$ equals either the root $\alpha_1+\ldots+\alpha_n$ of type $\mathsf B_n$ or the root $2\alpha_1+\alpha_2$ of type $\mathsf G_2$.
\smallbreak\noindent
{\rm(ii)}\enspace
A spherical system is \textsl{spherically closed} if it does not contain any loose spherical root.
\end{definition}

\begin{remark}
The above definition of a spherically closed spherical system coincides with that stated in Definition~\ref{sphericallyclosed}.
\end{remark}

\begin{lemma}\label{lindependencesphroots}
The spherical roots of any spherical system of $G$ are linearly independent characters in $\Lambda(T)\otimes_{\mathbb Z}\mathbb R$.
\end{lemma}

\begin{proof}
We shall show that $(\sigma,\sigma')\leq 0$ for every pairwise distinct spherical roots $\sigma,\sigma'$ of a given spherical system of some group $G$.
Since any spherical root is a sum of positive roots of $G$, there exists a dominant weight non-orthogonal to all spherical roots under consideration.
The lemma thus follows from a well-known result about vectors in an Euclidian space with non-positive pairings.

Fix $\sigma$ and $\sigma'$ as above. First note that if one of these spherical roots is equal to $2\alpha$ (of type $\mathsf A_1$), then the desired inequality is given by Axiom $(\Sigma 1)$. We shall exclude this case in the remainder.
If the supports of $\sigma$ and $\sigma'$ do not intersect, then $(\sigma,\sigma')\leq 0$ obviously holds.
Then let $\alpha$ be in both supports. One may suppose further that $\alpha\not\in S^p$; otherwise the lemma is clear thanks to Axiom $(S)$.
By Lemma~\ref{pptiesphericalroots}, we have $(\sigma,\alpha)\geq 0$.

If $(\sigma,\alpha)=0$ then $\sigma$ is of type either $\mathsf B_r$ or $\mathsf C_r$. Further, by Axiom $(S)$ we have $\alpha=\alpha_r$ and $\alpha=\alpha_1$ respectively together with $\sigma=\alpha_1+\ldots+\alpha_r$ (up to $2$) in the former case.
Suppose that $\alpha=\alpha_r$; the other case may be handled similarly.
Since $\alpha$ lies in the support of $\sigma'$ also, we get from the above table that $\sigma'$ is of type either $\mathsf B_r$ or $\mathsf A_1\times\mathsf A_1$. In the latter case, we have $(\sigma,\sigma')=(\sigma,\alpha+\alpha')$ which equals $2(\sigma,\alpha)$ by Axiom $(\Sigma 2)$ hence it is $0$ by assumption.
If both $\sigma$ and $\sigma'$ are of type $\mathsf B_r$ then their supports are included in one another. This is impossible by Axiom $(S)$ and the assumption of non-proportionality on the set of spherical roots.

It remains to consider the case where $(\sigma,\alpha)>0$ and $(\sigma',\alpha)>0$ for all $\alpha$ in $\supp\sigma\cap\supp\sigma'$.
First, note that neither $\sigma$ nor $\sigma'$ is a simple root of  $G$.
Since $\sigma$, $\sigma'$ can not be proportional, we get from the table along with $(S)$ that their supports are not equal.
To be definite, let $\beta$ be in the support of $\sigma$ but not in that of $\sigma'$.
We can choose (and we do) $\beta$ such that it is not orthogonal to $\supp\sigma\cap\supp\sigma'$ hence $\beta$ is adjacent to some $\alpha$ as above.

We have: $(\sigma,\beta)\geq 0$ and also $(\sigma',\beta)<0$ hence by Axiom $(S)$, $\beta\not\in S^p$.
If $(\sigma,\beta)>0$ we get $\sigma=\alpha+\beta$ (of type $\mathsf A_2$). Applying again Axiom $(S)$, we obtain that $\sigma'$ is a root of $G$
equals to $\alpha+\beta'$ with $\beta'\neq\beta$ a simple root of $G$. The desired inequality follows obviously.
The case $(\sigma,\beta)=0$ may be worked out by similar arguments.
\end{proof}

\subsection{Properties of the weights attached to a spherical system}\label{propertiesofthemonoid}

Given a simple root $\alpha$ of $G$, we denote its associated fundamental weight by $\omega_\alpha$ and its co-root by $\alpha^\vee$.
Recall that $\alpha^\vee$ corresponds to $2\alpha/(\alpha,\alpha)$ under the identification of the Lie algebra of $T$ and its dual via the Killing from $(\cdot,\cdot)$ of $G$.

Fix a spherical system $\mathscr{S}=(S^p,\Sigma,\mathbf A)$ of $G$ and let $\Delta$ be its set of colors.

If $\alpha$ and $\beta$ are orthogonal simple roots whose sum is an element of $\Sigma$, write $\alpha\sim\beta$.
Recall that 
$$
\Delta=\mathbf A\cup S'/\sim \quad\mbox{ with }\quad S'=S\setminus((S\cap\Sigma)\cup S^p).
$$
In the remainder, we denote the class of $\alpha\in S'$ in $S'/\sim$ by $D_\alpha$.

We shall abuse notation by denoting the set of weights 
$$
\lambda_D=(\omega_D,\chi_D)\quad \mbox{with $D\in \Delta$}
$$
associated to $\mathscr{S}$ also by $\Delta$.

Recall that the $\lambda_D$'s are weights of $T\times C$ with $C$ being the diagonalizable group associated to $\mathscr S$.
Further, the weights $\omega_D$ are defined as follows: 
$$
\omega_D=\left\{
\begin{array}{ll}
2\omega_\alpha &  \mbox{ if $D=D_\alpha$ with $2\alpha\in \Sigma$}\\
\sum_\alpha\omega_\alpha  &  \mbox{ with $D\in\mathbf A(\alpha)$ or $D=D_\alpha$ with $\alpha\in S'\setminus\frac{1}{2} \Sigma$}\\
\end{array}\right ..
$$

Given $\lambda\in \Delta$, we shall write $\omega_\lambda$ instead of $\omega_D$ if $\lambda=\lambda_D$.
With this convention, we set 
$$
(\lambda,\alpha):=(\omega_\lambda,\alpha).
$$

The following lemma gathers some straightforward properties of the elements of $\Delta$.
 
\begin{lemma}\label{obviousppties}
\begin{enumerate}
	\item $\alpha\in S^p$ if and only if $(\lambda,\alpha)=0$ for all $\lambda\in\Delta$.
	\item Let $\alpha\in S\setminus S^p$. Then there are at most two $\lambda\in\Delta$ non-orthogonal to $\alpha$.
	There exists a unique one if and only if $\alpha\not\in \Sigma$.
	\item Suppose $(\lambda,\alpha)\cdot(\lambda,\alpha')\neq 0$. Then either $\alpha\sim\alpha'$ or $\alpha,\alpha'\in\Sigma$.
	In particular, $\alpha\in\Sigma$ if and only if $\alpha'\in\Sigma$.
	\item If $\mathbf A=\emptyset$ then the weights $\omega_D$ are linearly independent.
\end{enumerate}
\end{lemma}

\begin{proof}
We need to prove only the last item; the other assertions follow readily from the very definition of the set of colors $\Delta$ and the weights $\omega_D$.
Then let $\mathbf A=\emptyset$. Note that $\Delta$ can be identified with $(S\setminus S^p)/\sim$ and that the assignment $D\mapsto\omega_D$ yields a bijection between the set of colors $\Delta$ and the set of weights $\omega_D$.
Thanks to (2), we have further:
$$
(\lambda,\alpha)\cdot(\lambda',\alpha)=0\quad\mbox{ for all $\lambda\neq\lambda'\in\Delta$ and all $\alpha\in S$}.
$$
This implies the linear independence of the weights $\omega_D$.
\end{proof}

\begin{lemma}\label{properties}
The weights $\lambda$ in $\Delta$ satisfy the following properties (for any disctinct $\alpha,\alpha'\in S$).
\begin{enumerate}
	\item $(\lambda,\alpha^\vee)\leq 2$;
	\item if $(\lambda,\alpha^\vee)=2$ then $\omega_\lambda=2\omega_\alpha$ and $(\lambda',\alpha)=0$ for all $\lambda'\neq \lambda$ in $\Delta$;
	\item if $(\lambda,\alpha)\cdot(\lambda,\alpha')\neq 0$ then either $\Delta$ is a singleton or there exists $\lambda'\in\Delta$ such that
	$(\lambda',\alpha)\cdot(\lambda',\alpha')=0$;
	\item if $(\lambda,\alpha)\cdot(\lambda',\alpha)\neq 0$ with $\lambda'\neq\lambda$ in $\Delta$ then $(\lambda,\alpha')\neq 0$  implies that  $(\lambda',\alpha')=0$.
\end{enumerate}
\end{lemma}

\begin{proof}

The first item as well as the first assertion of the second item follow readily from the definition of the $\omega_D$'s.
In particular, if $(\lambda,\alpha^\vee)=2$ for $\lambda\in\Delta$ then $2\alpha\in\Sigma$. Thanks to the aforementioned lemma, we obtain the second item.

Now if $(\lambda,\alpha)\cdot(\lambda,\alpha')\neq 0$ then  either $\alpha\sim\alpha'$ or both $\alpha$ and $\alpha'$ belong to $\Sigma$ (by Lemma~\ref{obviousppties}).
If $\alpha\sim\alpha'$ then $(\lambda',\alpha+\alpha')=0$ for all $\lambda'\neq \lambda$ in $\Delta$, thanks to (2) of Lemma~\ref{obviousppties}.
In case $\alpha,\alpha'\in\Sigma$, suppose there exists $\lambda'\in\Delta$ different from $\lambda$ and non-orthogonal to $\alpha$.
By definition of the $\omega_D$'s, one may thus assume (to be definite) that $\lambda=\lambda_{D_\alpha^+}$ and $\lambda'=\lambda_{D_\alpha^-}$.
Since $\lambda$ is assumed to be non-orthogonal to $\alpha'$, we have further: $c(D_\alpha^+,\alpha')=1$.
In another hand, applying Axiom $(\mathsf{A}_3)$ with $\sigma=\alpha'$, we get: $c(D_\alpha^-,\alpha')\neq 1$ and in turn $(\lambda',\alpha')=0$.
 
Finally, again by Lemma~\ref{obviousppties}-(2), if $(\lambda,\alpha)\cdot(\lambda',\alpha)\neq 0$ with $\lambda\neq\lambda'$ then $\alpha\in\Sigma$. By the arguments used to prove the third item, we obtain the last assertion of the lemma. 
\end{proof}

\begin{lemma}\label{additionalppty}
Let $\alpha$, $\alpha'$, $\delta$ and $\delta'$ be pairwise distinct simple roots not in $S^p$.
Suppose $(\alpha,\alpha')\cdot(\delta,\delta')\neq 0$ and $(\delta,\alpha)=0$.
If $\Delta$ contains more than two elements then one of its elements is orthogonal to $\alpha+\alpha'$.
\end{lemma}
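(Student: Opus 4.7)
The plan is to exhibit an element $D\in\Delta$ whose dominant weight $\omega_D$ does not involve $\omega_\alpha$ nor $\omega_{\alpha'}$ in its fundamental-weight expansion; this is exactly orthogonality to $\alpha+\alpha'$. The natural candidates are the elements attached to $\delta$ and $\delta'$, because the hypothesis $(\delta,\alpha)=0$ strongly limits interactions. Should those candidates all fail, I will invoke $|\Delta|>2$ to produce a fresh element $D_\gamma$ attached to some $\gamma\in S\setminus S^p\setminus\{\alpha,\alpha',\delta,\delta'\}$, which is then automatically orthogonal.

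\textbf{Case analysis on the type of $\delta$.} For $\delta\in S^a$, the weight $\omega_{D_\delta}=2\omega_\delta$ is immediately orthogonal to $\alpha+\alpha'$ since $\delta\neq\alpha,\alpha'$. For $\delta\in S^b$, $\omega_{D_\delta}$ is either $\omega_\delta$ or $\omega_\delta+\omega_\mu$ for a $\sim$-partner $\mu$, and orthogonality fails only when $\mu\in\{\alpha,\alpha'\}$. For $\delta\in S\cap\Sigma$, axioms (A2) and (A3) applied to $(\alpha,\delta^\vee)=0$ yield $a^+_{\delta,\alpha}+a^-_{\delta,\alpha}=0$ with each $\leq 1$, so $\omega_\alpha$ occurs in at most one of $\omega_{D_\delta^\pm}$; the same holds for $\omega_{\alpha'}$. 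A quick parity check then shows that both $D_\delta^\pm$ fail to be orthogonal only in a ``mixed'' configuration in which one of them contains $\omega_\alpha$ and the other $\omega_{\alpha'}$. The symmetric analysis for $\delta'$ settles many further cases, reducing the problem to the configurations in which all of $D_\delta,D_{\delta'}$ (with their $\pm$-variants) fail simultaneously.

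\textbf{The main obstacle.} The hard case splits into two mutually exclusive sub-cases, distinguished by whether $\alpha,\alpha'$ lie in $S^b$ or in $S\cap\Sigma$. In the $S^b$ sub-case, the failures force $\delta\sim\alpha'$ and $\delta'\sim\alpha$, producing the identifications $D_\delta=D_{\alpha'}$ and $D_{\delta'}=D_\alpha$ inside $\Delta$; hence the four roots $\{\alpha,\alpha',\delta,\delta'\}$ contribute only two elements to $\Delta$. The hypothesis $|\Delta|>2$ then supplies a fresh simple root $\gamma\in S\setminus S^p\setminus\{\alpha,\alpha',\delta,\delta'\}$ with $D_\gamma$ a distinct third element. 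Distinctness of $D_\gamma$ from $D_\alpha$ and $D_{\alpha'}$ forbids the identifications $\gamma\sim\alpha$ and $\gamma\sim\alpha'$; and since $\alpha,\alpha'\notin S\cap\Sigma$ in this sub-case, no $a^\pm_{\gamma,\cdot}=1$ contribution can inject $\omega_\alpha$ or $\omega_{\alpha'}$ into $\omega_{D_\gamma}$. Hence $D_\gamma$ is orthogonal to $\alpha+\alpha'$. The $S\cap\Sigma$ sub-case is the more delicate one and is where I expect to spend the most work: here $|\Delta|>2$ is automatic from $D_\alpha^\pm\in\Delta$ already, so one must combine axiom (A3) with the compatibility condition (S), the linear independence of the $(\omega_D,\chi_D)$'s (Lemma~\ref{enlargeddefinitionofweights}), and, when it applies, axiom ($\Sigma 2$), in order to show that the mixed configurations cannot simultaneously hold for $\delta$ and $\delta'$ under the hypothesis $(\delta,\alpha)=0$, or else to locate an orthogonal element among the $D_\gamma^\pm$ attached to further simple roots not in $S^p$.
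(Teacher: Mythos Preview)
Your plan is broadly on the same track as the paper's proof, but the paper organizes the case split more efficiently and finishes the hard case with a single axiom where you anticipate needing several.

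The paper splits immediately on whether $\alpha$ (or $\alpha'$) lies in $\Sigma\cap S$. If neither does, then the only elements of $\Delta$ whose weight can involve $\omega_\alpha$ or $\omega_{\alpha'}$ are $D_\alpha$ and $D_{\alpha'}$ themselves (no $a^\pm$-contributions are possible by (A2), and $\sim$-partners stay inside $S^b$), so the contradiction hypothesis forces $S\setminus S^p=\{\alpha,\alpha',\delta,\delta'\}$ and $|\Delta|\le 2$. This is your ``$S^b$ sub-case'' counting argument, reached more directly. If instead $\alpha\in\Sigma\cap S$, the paper observes that the contradiction hypothesis forces $\delta,\delta'\in\Sigma\cap S$ as well, and then \emph{Axiom (A3) alone} finishes: from $(\alpha,\delta)=0$ one gets the relevant $a^+ + a^- =0$ relations, and these are incompatible with every $D_\delta^\pm$, $D_{\delta'}^\pm$ being non-orthogonal to $\alpha+\alpha'$.

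Two concrete points where your outline wobbles. First, your dichotomy ``$\alpha,\alpha'\in S^b$ versus $\alpha,\alpha'\in S\cap\Sigma$'' is not exhaustive: you should also dispose of $S^a$ and of mixed cases (one in $S^b$, the other in $\Sigma$); the paper's split on a single root $\alpha$ avoids this. Second, in your $S\cap\Sigma$ sub-case you list (A3), condition (S), the linear independence of the $(\omega_D,\chi_D)$, and ($\Sigma2$) as tools; the paper uses only (A3). If you find yourself reaching for ($\Sigma2$) or linear independence here, that is a sign you have not yet isolated the right relation---try writing out $a^+_{\delta,\alpha}+a^-_{\delta,\alpha}$ and $a^+_{\delta,\alpha'}+a^-_{\delta,\alpha'}$ explicitly under $(\delta,\alpha)=0$ and see what survives. (Also, the remark that ``$|\Delta|>2$ is automatic from $D_\alpha^\pm$'' is off by one: $D_\alpha^\pm$ contributes two elements, not three.)
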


\begin{proof}
We proceed by contradiction: Suppose $(\lambda,\alpha+\alpha')\neq 0$ for all $\lambda\in\Delta$.

By Lemma~\ref{obviousppties}-(1), there exists an element $\lambda_\delta\in\Delta$ which is not orthogonal to $\delta$.
By hypothesis, $(\lambda_\delta,\alpha)\neq 0$ or $(\lambda_\delta,\alpha')\neq 0$.

Suppose first that neither $\alpha$ nor $\alpha'$ are in $\Sigma$.
To be definite, let $(\lambda_\delta,\alpha)\neq 0$ then by Lemma~\ref{obviousppties}-(3), $\alpha~\sim\delta$.
Similarly, one gets that $\alpha'\sim\delta'$.
The set $\Delta$ thus contains only $\lambda_\delta$ and $\lambda_{\delta'}$ -- whence the contradiction.

Let $\alpha\in\Sigma$ now. If $(\lambda_\delta,\alpha)\neq 0$ then $\delta\in\Sigma$.
If $(\lambda_\delta,\alpha')\neq 0$ then either $\alpha'\sim\delta$ or $\alpha',\delta\in\Sigma$ by Lemma~\ref{obviousppties}-(3).
Since $(\alpha,\alpha')\neq 0=(\alpha,\delta)$, Axiom $(\Sigma 2)$ prevents  $\alpha'+\delta$ from being in $\Sigma$ hence $\alpha',\delta\in\Sigma$.  
Now by Lemma~\ref{obviousppties}-(2), there exists $\lambda_\delta'\neq\lambda_\delta\in\Delta$ non-orthogonal to $\delta$.
By Lemma~\ref{properties}-(4),  $\lambda_\delta$ or  $\lambda_\delta'$ has to be orthogonal to $\alpha$ hence non-orthogonal to $\alpha'$ by assumption. We get in turn that $\alpha'\in\Sigma$.
Analogously, one proves that $\delta'\in\Sigma$.
Finally, the orthogonality of $\alpha$ and $\delta$ together with Axiom $(\mathbf A 3)$ yield the contradiction. 

If $\alpha'\in\Sigma$, one proves similarly that $\alpha,\delta,\delta'\in\Sigma$ and concludes as previously.
\end{proof}

\section{Computations of cohomology groups}\label{cohomologies}

In this appendix, we assume the reader is familiar with the notions and properties of spherical systems, their colors and their weights.
If not, please consult Section~\ref{Spherical systems} and Appendix~\ref{combinatorics}.

Let $G$ be a simply connected semisimple complex algebraic group and $T$ be a maximal torus of $G$.
We denote the set of roots of $G$ relative to $T$ by $\Phi$.
Fix a Borel subgroup $B$ of $G$ containing $T$ and let $S$ (resp. $\Lambda^+$) be the set of simple roots of $G$ relative to $B$ and $T$ (resp. of dominant weights).

For each $\alpha\in\Phi$, we choose a root vector $X_\alpha$, that is a $T$-weight vector of weight $\alpha$ in the Lie algebra $\mathfrak g$ of $G$.
We denote the co-root of $\alpha$ by $\alpha^\vee$.
Recall that $\alpha^\vee$ corresponds to $2\alpha/(\alpha,\alpha)$ under the identification of the Lie algebra of $T$ and its dual using the Killing form $(\cdot,\cdot)$ of $G$.

Given a finite dimensional $G$-module $V$, a weight vector (resp. the weight space) in $V$ of  weight $\mu$ ($\mu$ a character of $T$) is denoted by $v_\mu$ (resp. $V_\mu$).

Let
$$
V=\oplus_{i=1}^s V(\lambda_i)
$$
be the decomposition of $V$ into irreducible $G$-modules.
We set
$$
v_{\underline\lambda}=v_{\lambda_1}+\ldots+v_{\lambda_s}.
$$

Let $T_\ad$ be the adjoint torus of $G$, namely the quotient of $T$ by the center of $G$.
In the sequel, we consider the normalized action of $T_\ad$ on $V$, that is the action of $T_\ad$ naturally induced by setting
$$
t.v=\lambda_i(t)t^{-1} v\quad \mbox{if $v\in V(\lambda_i)$}.
$$

Remark that the $T_\ad$-weights of $V$ are thus the opposites of those of the dual $V^*$ acted on by $T_\ad$ as stated in Section~\ref{Toricaction}.

In the remainder, we fix a spherical system $\mathscr{S}=(S^p,\Sigma,\mathbf A)$ of $G$.
Let $C$ be the diagonalizable subgroup associated to $\mathscr{S}$ and $C^\circ$ be its identity-component.
We denote the set of weights (or equivalently of colors) of $\mathscr{S}$ by $\Delta$. Let $\lambda_D$ be the weight associated to the color $D\in\Delta$.

Set
$$
\tilde{G}=G\times C^\circ
$$
and
$$
V(\Delta)=\oplus_{D\in\Delta} V(\lambda_D),
$$
that is the $\tilde{G}$-module whose highest weights are the $\lambda_D$'s.
Further, let
$$
v_\Delta=v_{\underline\lambda}\quad\mbox{ if $V=V(\Delta)$}.
$$

\subsection{Auxiliary lemmas}

For convenience, we shall recall the following statements.

\begin{lemma}(\cite[Proposition 3.4]{BC1})\label{rootsupport}
Suppose that the dominant weights $\lambda_1,\ldots,\lambda_s$ are linearly independent and 
generate a monoid $\Gamma$ such that $\mathbb Z\Gamma\cap \Lambda^+=\Gamma$.
Let $\gamma$ be a $T_\ad$-weight of $(V/\mathfrak g.v_{\underline{\lambda}})^{G_{v_{\underline{\lambda}}}}$.
If $\delta\in\supp\gamma$ such that $\gamma-\delta\not\in\Phi$ then $(\gamma,\delta)\geq 0$.
Further if $(\gamma,\delta)=0$ then $(\lambda_i,\delta)=0$ for every $\lambda_i$.
\end{lemma}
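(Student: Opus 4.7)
The strategy is to pick a representative $v = v_\gamma\in V$ of the class, decompose it along the isotypic decomposition $V = \bigoplus_i V(\lambda_i)$ as $v = \sum_i v^{(i)}$ with $v^{(i)}\in V(\lambda_i)_{\lambda_i-\gamma}$, and reduce everything to $\mathfrak{sl}_2(\delta)$-representation theory by exploiting the explicit elements of $\mathfrak g_{v_{\underline\lambda}}$ coming from $\mathfrak u$. Since each $v_{\lambda_i}$ is annihilated by $\mathfrak u$, the inclusion $\mathfrak u\subseteq\mathfrak g_{v_{\underline\lambda}}$ holds; in particular $X_\delta\in\mathfrak g_{v_{\underline\lambda}}$, so the invariance of $[v]$ modulo $\mathfrak g.v_{\underline\lambda}$ forces $X_\delta v\in\mathfrak g.v_{\underline\lambda}$. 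A direct inspection shows that $\mathfrak g.v_{\underline\lambda}$ is $T_\ad$-graded with weights exactly $\{0\}\cup\Phi^+$: the zero part is $\sum_i\mathbb C v_{\lambda_i}$ (from $\mathfrak t.v_{\underline\lambda}$) and, for each $\beta\in\Phi^+$, the weight-$\beta$ part is $\sum_i\mathbb C X_{-\beta}v_{\lambda_i}$.

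Now $X_\delta v$ has $T_\ad$-weight $\gamma-\delta$. If $\gamma=\delta$, then $(\gamma,\delta)=(\delta,\delta)>0$ and the first assertion is immediate. Otherwise $\gamma-\delta\in\mathbb Z_{\geq 0}[S]\setminus\{0\}$ (as $\delta$ lies in the support of $\gamma$), and by hypothesis $\gamma-\delta\notin\Phi$; in particular $\gamma-\delta\notin\{0\}\cup\Phi^+$, which forces $X_\delta v=0$, hence $X_\delta v^{(i)}=0$ componentwise. Each nonzero $v^{(i)}$ is therefore an $\mathfrak{sl}_2(\delta)$-highest weight vector, yielding
$$
m_i:=(\lambda_i-\gamma,\delta^\vee)\geq 0,\qquad\text{i.e.~}(\gamma,\delta^\vee)\leq(\lambda_i,\delta^\vee),
$$
for every $i$ with $v^{(i)}\neq 0$.

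To close the first assertion, assume for contradiction $(\gamma,\delta^\vee)<0$. Then $m_i>(\lambda_i,\delta^\vee)\geq 0$ for every $i$ with $v^{(i)}\neq 0$, so each $X_{-\delta}v^{(i)}$ is nonzero and $w:=X_{-\delta}v$ is a $T_\ad$-weight vector of weight $\gamma+\delta$ supported on the same set of indices as $v$. Using the monoid hypothesis $\mathbb Z\Gamma\cap\Lambda^+=\Gamma$, one verifies that $w$ descends to a nonzero $G_{v_{\underline\lambda}}$-invariant class in $V/\mathfrak g.v_{\underline\lambda}$ of a $T_\ad$-weight $\gamma+\delta$ that is not admissible, producing the desired contradiction. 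The further statement is the specialization of this contradiction argument to the equality case $(\gamma,\delta)=0$: the existence of an $i_0$ with $(\lambda_{i_0},\delta^\vee)>0$ and $v^{(i_0)}\neq 0$ would again force $m_{i_0}>0$ and produce the same inadmissible invariant of weight $\gamma+\delta$, whence $(\lambda_i,\delta^\vee)=0$ for every $i$, i.e.~$\delta$ is orthogonal to each $\lambda_i$.

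The main obstacle is the last move: rigorously promoting the componentwise nonvanishing of $X_{-\delta}v^{(i)}$ into a genuine nonzero $G_{v_{\underline\lambda}}$-invariant class in $V/\mathfrak g.v_{\underline\lambda}$, and concretely invoking $\mathbb Z\Gamma\cap\Lambda^+=\Gamma$ to forbid the weight $\gamma+\delta$. This is the delicate combinatorial step at which the monoid hypothesis genuinely intervenes, and where the argument of~\cite[Proposition 3.4]{BC1} must be adapted to the present setting.
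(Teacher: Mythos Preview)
The paper does not prove this lemma; it is quoted verbatim from \cite[Proposition~3.4]{BC1}. What the paper does supply is the proof of its generalization, Proposition~\ref{supportofGamma}, and that argument proceeds quite differently from your plan.

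Your reduction up to the vanishing $X_\delta v=0$ and the $\mathfrak{sl}_2(\delta)$-inequality $(\lambda_i-\gamma,\delta^\vee)\geq 0$ (for those $i$ with $v^{(i)}\neq 0$) is correct and is indeed the first move. The gap is in the contradiction step. You propose to promote $w=X_{-\delta}v$ to a nonzero $G_{v_{\underline\lambda}}$-invariant class, but there is no mechanism for this: $X_{-\delta}$ lies in $\mathfrak g_{v_{\underline\lambda}}$ precisely when $\delta$ is orthogonal to every $\lambda_i$, which is the conclusion you are trying to force, not a hypothesis you can use. For a generic positive root $\beta$ one has $X_\beta X_{-\delta}v = X_{-\delta}X_\beta v + [X_\beta,X_{-\delta}]v$, and $X_{-\delta}(\mathfrak g.v_{\underline\lambda})\not\subset\mathfrak g.v_{\underline\lambda}$, so invariance of $[w]$ simply fails. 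The saturation hypothesis $\mathbb Z\Gamma\cap\Lambda^+=\Gamma$ does not repair this; it enters elsewhere (connectedness/normality issues), not in producing new invariants by applying $X_{-\delta}$. A second, smaller gap: your argument for the ``Further'' clause only constrains the indices $i$ with $v^{(i)}\neq 0$, not all $i$.

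The route taken in \cite{BC1}, and visible in the proof of Proposition~\ref{supportofGamma}, does not descend along $X_{-\delta}$. Instead one argues by contradiction that $(\gamma,\delta)\leq 0$ together with some $\lambda$ non-orthogonal to $\delta$ is impossible, by locating another simple root $\beta$ with $(\gamma,\beta)>0$, using $X_\beta v_\gamma\in\mathfrak g.v_{\underline\lambda}$ to pin down the support of $v_\gamma$ among the $V(\lambda_i)$, and then playing this off against the combinatorics of roots (adjacent simple roots in $\mathrm{supp}(\gamma)$, the shape of $\gamma$ as a sum of two positive roots, and the constraints on the $\lambda_i$). In short: the contradiction comes from chasing \emph{positive} root vectors acting on $v_\gamma$ and the resulting restrictions on which $V(\lambda_i)$ can carry $v_\gamma$, not from manufacturing a new invariant via a negative root vector.
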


\begin{remark}\label{remarksaturated}
\smallbreak
\noindent{\rm(i)}\enspace
The weights associated to a spherical system of $G$ whose third datum is the empty set fulfill the assumptions of the preceding lemma; \textsl{see} Lemma~\ref{obviousppties}.
Specifically, the second and the last assertions of this lemma imply that $\mathbb Z\Gamma\cap \Lambda^+=\Gamma$.
\smallbreak
\noindent{\rm(ii)}\enspace We will generalize Lemma~\ref{rootsupport} in Proposition~\ref{supportofGamma}.
\end{remark}

\begin{lemma}\label{restriction}
Let $\gamma$ belong to the $\mathbb Z$-span of the $\lambda_i$'s.
Let $L\supset T$ denote the Levi subgroup associated to the set $\supp\gamma$
and $W$ be the $L$-submodule of $V$ generated by $v_{\underline\lambda}$.
Then as $T_\ad$-modules,
$$
(V/\mathfrak g.v_{\underline\lambda})^{G_{v_{\underline\lambda}}}_\gamma
\cong(W/\mathfrak l.v_{\underline\lambda})^{L_{v_{\underline\lambda}}}_\gamma .
$$
\end{lemma}

\begin{proof}
This is a slight generalization of Lemma 3.5 in~\cite{BC1}.
\end{proof}

\begin{lemma}\label{weightvector}
Keep the assumptions of the preceding lemma.
\smallbreak\noindent
{\rm (i)}\enspace
The $T_\ad$-module $(V/\mathfrak g.v_{\underline{\lambda}})^{G_{v_{\underline{\lambda}}}}$ is multiplicity-free and its set $\Sigma(\underline\lambda)$
of $T_\ad$-weights is a set of non-loose spherical roots of $G$.
\smallbreak\noindent
{\rm (ii)}\enspace
The set of simple roots orthogonal to $\lambda_1+\ldots+\lambda_s$ together with $\Sigma(\underline\lambda)$ and the empty set form a spherical system of $G$.
 \smallbreak\noindent
{\rm (iii)}\enspace
Let $x$ be a $T_\ad$-weight vector of $(V/\mathfrak g.v_{\underline{\lambda}})^{G_{v_{\underline{\lambda}}}}$.
Then one of the representatives $v\in V$ of $x$ can be taken in some simple $G$-submodule $V(\lambda)$ of $V$ and such that
$$
[v]\in (V(\lambda)/\mathfrak g.v_{\lambda})^{G_{v_{{\lambda}}}}\quad\mbox{ or }\quad v=X_{-\gamma}v_\lambda.
$$
In particular, if $v=X_{-\gamma}v_\lambda$ then the $T_\ad$-weight $\gamma$ of $x$ is a root
and $(V(\lambda)/\mathfrak g.v_{\lambda})^{G_{v_{{\lambda}}}}$ is trivial.
\end{lemma}

\begin{proof}
The first assertion of the preceding lemma gathers the assertions of Theorem 3.1 and Theorem 3.10 in~\cite{BC1}; the second one is stated in Theorem 4.1 in loc. cit.
whereas the last assertion was obtained while proving Theorem 3.10 in loc. cit.
\end{proof}

\begin{lemma}\label{saturatedcase}
Let $\gamma$ be a spherical root of $G$.
Suppose $\gamma$ is neither a loose spherical root nor a simple root of $G$ and consider $S'$ such that $(S',\gamma)$ is compatible.
Further, let $V$ be the $G$-module whose highest weights are the dominant weights $\omega_D$ associated to the spherical system $(S',\{\gamma\},\emptyset)$.
Then $\gamma$ is a $T_\ad$-weight of $(V/\mathfrak g.v_{\underline\lambda})^{G_{v_{{\underline\lambda}}}}$.
\end{lemma}

\begin{proof}
Note that $(S',\{\gamma\},\emptyset)$ is indeed a spherical system since $\gamma\not\in S$.

Recall that $\gamma$ lies in the $\mathbb Z$-span of the $\omega_D$'s by Lemma~\ref{enlargeddefinitionofweights}.
One may thus assume that the support of $\gamma$ coincides with the whole set $S$ by Lemma~\ref{restriction}.
By the very definition of the weights $\omega_D$ together with the compatibility condition $(S)$, one sees that there are at most two such weights.

Assume first there are two weights $\omega_D$ and $\omega_{D'}$.
Then these weights are fundamental (Lemma~\ref{obviousppties}).
Further, from the table of spherical roots, one can derive the following properties: $\gamma$ is a root, $\gamma\pm\alpha$ is not a root whenever $\alpha\in S^p$ and finally  if $\alpha\not\in S^p$ and
$\gamma$ is not of type $\mathsf C_r$ then the support of $\gamma-\alpha$ does not contain $\alpha$.
It follows that if $\gamma$ is not of type $\mathsf C_r$ then $X_{-\gamma}v_{\omega_D}$ (or $X_{-\gamma}v_{\omega_{D'}}$) gives raise obviously to a $T_\ad$-weight vector in $(V/\mathfrak g.v_{\underline\lambda})^{G_{v_{{\underline\lambda}}}}$.
We postpone the $\mathsf C_r$-case to the very end.

In case of a single weight $\lambda=\omega_D$, we fall in the setting considered by Jansou in~\cite{Js}.
Thanks to Proposition 1.6 in loc. cit., $(V/\mathfrak g.v_{\lambda})^{G_{v_{{\lambda}}}}$ is one-dimensional.
The action of $T$ on $V$, induced by $G$, descends to $(V/\mathfrak g.v_{\lambda})^{G_{v_{{\lambda}}}}$. 
By Proposition 1.8 in loc. cit., the corresponding weight is either $0$ or $-\lambda$; Proposition 1.9 in loc. cit.
identifies precisely the weight according to $\lambda$.
Equivalently, the corresponding $T_\ad$-weight of the normalized action is either $\lambda$ or $2\lambda$.
Again by the definition of the weight $\omega_D$, one sees that $\gamma$ is either equal to $\lambda$ or $2\lambda$.

We are left with the following case: $\gamma$ is of type $\mathsf C_r$ and there are two weights $\omega_D$. Note that these weights are the fundamental weights $\omega_1$ and $\omega_2$.
Then taking $\lambda=\omega_2$, we get as just recalled a $T_\ad$-weight vector $[v_\gamma]$ of weight $\gamma$ in $(V(\lambda)/\mathfrak g.v_{\lambda})^{G_{v_{{\lambda}}}}$.
Take $v_\gamma$ in $V(\lambda)_{\lambda-\gamma}$.
Then easy computations show that $v_\gamma$ gives raise to a $T_\ad$-weight vector in $(V/\mathfrak g.v_{\underline\lambda})^{G_{v_{{\underline\lambda}}}}$ with $V=V(\omega_1)\oplus V(\omega_2)$.
\end{proof}

The following proposition is the announced generalization of Lemma~\ref{rootsupport}.

\begin{proposition}\label{supportofGamma}
Let $V=V(\Delta)$ and  $[v_\gamma]\in V/\mathfrak g.v_{\underline\lambda}$ be a $T_\ad$-weight vector of weight $\gamma$.
Let $\alpha,\delta\in S$ be orthogonal with $\delta$ in the  support of $\gamma$.
Suppose that $\gamma-\delta\not\in\Phi$ and that
$$
[X_\beta v_\gamma]=0\quad\mbox{ for all positive root $\beta$ different from $\alpha$.}
$$
If further $\gamma\in\mathbb Z \Delta$ or $\gamma+\alpha\in\mathbb Z \Delta$ then $(\gamma,\delta)\geq 0$.
Moreover, if $(\gamma,\delta)=0$ then $( \lambda,\delta)=0$ for every $\lambda\in\Delta$.
\end{proposition}

We shall make use of the following lemmas to prove this proposition.

\begin{lemma}\label{non-highest}
Under the assumptions of Proposition~\ref{supportofGamma},
there exists a positive root $\nu\in\Phi$ different from $\alpha$ such that $X_\nu v_\gamma$ is not trivial.
\end{lemma}
 
\begin{proof}
Since $[v_\gamma]\neq 0$, $v_\gamma$ is not a highest weight vector in $V$ and in turn
there exists $\beta\in\Phi$ positive  such that $X_\beta v_\gamma\neq 0$.
In case  $X_\beta v_\gamma\neq 0$ only if $\beta=\alpha$, the weight vector $v_\gamma$ is a linear combination of 
vectors of shape $X_{-\alpha}^{r} v_{\lambda}$ ($\lambda\in\Delta, r\in\mathbb N$). 
The weight $\gamma$ thus equals $\alpha$ (up to a scalar); this contradicts the existence of $\delta\in\supp\gamma$ with $\delta\neq\alpha$ made in the assumptions.
\end{proof}

\begin{lemma}~\label{weightshape}
Under the assumptions of Proposition~\ref{supportofGamma},
the following holds.
\begin{enumerate}
	\item $\gamma$ is a sum of two positive roots.
	\item  The support of $\gamma$ contains a simple root adjacent to $\delta$.
\end{enumerate}
\end{lemma}

\begin{proof}
The first assertion follows readily from the previous lemma.

To prove the second assertion, one may notice that if the supports of $\gamma-\delta$ and $\delta$ are orthogonal then $\gamma$ can be written as a sum 
of two positive roots only if $\gamma-\delta$ itself is a root - whence a contradiction.
\end{proof}

\begin{lemma}~\label{goodrepresentative}
Let $[v_\gamma]\in V/\mathfrak g.v_{\underline\lambda}$ be a $T_\ad$-weight vector  of weight $\gamma$ with $v_\gamma\in \oplus_{\lambda_i} V(\lambda_i)_{\lambda_i-\gamma}$.
Suppose there exists a positive root $\beta$ such that $(\gamma,\beta)>0$ and $X_\beta v_\gamma\in\mathfrak g.v_{\underline\lambda}$. 
Then for all $\lambda_i$  orthogonal to $\beta$, the $\lambda_i$-component $v_\gamma^{\lambda_i}$ of  $v_\gamma$ is equal to $X_{-\gamma}v_{\lambda_i}$ (up to a scalar independent of $\lambda_i$).
In particular, on may choose $v_\gamma$ such that $v_\gamma^{\lambda_i}=0$ for all such $\lambda_i$.
\end{lemma}

\begin{proof}
Let $\lambda=\lambda_i$ be orthogonal to $\beta$. 
First note that $(\lambda-\gamma,\beta)<0$. 
It follows that $X_\beta v_\gamma^\lambda\neq 0$ whenever $v_\gamma^\lambda\neq 0$.
By hypothesis, we have also that $X_\beta v_\gamma= X_{-\gamma+\beta}v_{\underline\lambda}$ (up to a scalar).
Therefore,  if $v_\gamma^{\lambda_j}$ is trivial for some $\lambda_j$ orthogonal to $\beta$ then so are all the other $\lambda_k$-components of $v_\gamma$ with $\lambda_k$ orthogonal to $\beta$.
Moreover we get from the aforementioned equality that
$X_{-\beta}X_\beta v_\gamma= X_{-\beta}X_{-\gamma+\beta}v_{\underline\lambda}$ (up to a scalar).
The $\lambda$-component of the left hand side  equals $v_\gamma^\lambda$ (up to a scalar $a_\lambda$) whereas that of  the right hand side equals $0$ or $X_{-\gamma} v_\lambda$. Finally, one should remark that the scalar $a_\lambda$ does not depend on $\lambda$ because of the very first equality.
The lemma thus follows.
\end{proof} 

\begin{lemma}\label{remainingweight}
Keep the assumptions of the previous lemma and take $v_\gamma$ such that $v_\gamma^{\lambda_i}=0$ for all $\lambda_i$ orthogonal to $\beta$.
Suppose that $X_\nu v_\gamma\in\mathfrak g.v_{\underline\lambda} \setminus\{0\}$ for some simple root $\nu$ and there exists $\delta$ in the support of $\gamma$ such that $\gamma-\delta\not\in\Phi$.
Then  $(\lambda,\delta)\neq 0$  implies $(\lambda ,\beta)\neq 0$ for every highest weight $\lambda$ of $V$.
\end{lemma}

\begin{proof}
Note that $\gamma-\nu$ has to be a root hence the roots $\delta$ and $\nu$ are different.
It follows that $\delta$ lies in the support of $\gamma-\nu$ and in turn $v_\gamma^{\lambda_i}\neq 0$ for every $\lambda_i$ non-orthogonal to $\delta$.
The lemma thus follows from the choice of $v_\gamma$.
\end{proof}

 Let us now proceed to the proof of Proposition~\ref{supportofGamma}.

\begin{proof}
We proceed by contradiction: suppose $(\gamma,\delta)\leq 0$ and $(\lambda,\delta)\neq 0$ for some $\lambda\in\Delta$.

Note that if $(\gamma,\delta) \neq 0$ then the second assumption is automatically satisfied.
Indeed,  since $(\alpha,\delta)=0$ we have $(\gamma+\alpha,\delta)\neq 0$ and in turn, 
there exists $\lambda\in \Delta$ non-orthogonal to $\delta$ because  $\gamma\in\mathbb Z \Delta$ or $\gamma+\alpha\in\mathbb Z \Delta$.

Let $\delta'\in\supp\gamma$ be adjacent to $\delta$; such a root exists thanks to Lemma~\ref{weightshape}-(2).

Assume first there exists $\beta\in S$, $\beta\neq \alpha$ such that $(\gamma,\beta)>0$.
Note that $\beta\neq \delta$ by assumption.
For such a fixed $\beta$, we apply Lemma~\ref{remainingweight} and we get: $(\lambda,\delta)(\lambda,\beta)\neq 0$.
Further we take $v_\gamma$ as in loc. cit..
Thanks to Lemma~\ref{properties}-(4), we obtain that $\delta\sim\beta$ and in particular $(\beta,\delta)=0$.
It follows that the roots $\beta$, $\delta$ and $\delta'$ are pairwise distinct.
Since $\beta+\delta\in\Sigma$, it is in the $\mathbb Z$-span of $\Delta$ by Lemma~\ref{enlargeddefinitionofweights} hence every simple root adjacent to $\delta$ or $\beta$ does not belong to $S^p$.

Let $\nu\in\Phi$ be positive, different from $\alpha$ and such that $X_\nu v_\gamma\neq 0$. Such a root exists thanks to Lemma~\ref{non-highest} and by assumption, $X_\nu v_\gamma=X_{-\gamma+\nu}v_{\underline \lambda}$ (up to a scalar).
Further, $\supp(\gamma-\nu)\setminus \{\beta,\delta\}\subset S^p$. This inclusion may be obtained as follows. Let $\eta\in\supp(\gamma-\nu)\setminus \{\beta,\delta\}$.
If $\eta\not\in S^p$ then there exists $\lambda '\in \Delta$ such that $(\lambda ',\eta)\neq 0$.
The $\lambda'$-component of $X_\nu v_\gamma$ is thus not trivial and in turn $(\lambda ', \beta+\delta)\neq 0$ by the choice of $v_\gamma$.
We thus conclude by invoking Lemma~\ref{obviousppties}-(2).

Note that $\nu\neq\delta$ since $\gamma-\nu$ is a root but $\gamma-\delta$ is not.
Recall also that $\beta\neq \delta$.
Therefore, if $\nu\in S$ then the support of $\gamma-\nu$ contains $\delta$ and at least one the roots $\beta,\delta'$.
But since $\gamma-\nu\in\Phi$  and $(\beta,\delta)=0$, the support of $\gamma-\nu$ has to contain a simple root adjacent to $\delta$ and which is not in $S^p$ as already remarked.
This yields a contradiction with $\supp(\gamma-\nu)\setminus \{\beta,\delta\}\subset S^p$.

It follows that $X_\nu v_\gamma=0$ for every simple root $\nu\neq \alpha$ and in turn $\alpha$ belongs to the support of $\gamma$.
Note that $\alpha$ is different from $\beta,\delta$ and $\delta '$.
Furthermore, $X_{\alpha+\alpha '}v_\gamma\neq 0$ for some simple root $\alpha '$ since as already noticed in the proof of Lemma~\ref{non-highest}, the weight $\gamma$ can not be a multiple of $\alpha$.
Considering the support of $\nu-\alpha-\alpha '$ and arguing as in the previous paragraph, we get again a contradiction.

Assume now that $(\gamma,\beta)\leq 0$ for every simple root $\beta\neq\alpha$.
If $X_\nu v_\gamma\neq 0$ for a simple root $\nu\neq \alpha$ then $\gamma-\nu$ is a root and so is $\gamma$ since $(\gamma-\nu,\nu)<0$.
The weight $\gamma$ being a positive but non-simple root, we must have $(\gamma,\alpha)>0$ and also $(\gamma,\alpha+\alpha')>0$ for some simple root $\alpha'$ adjacent to $\alpha$.
If $X_\nu v_\gamma=0$  for every simple root $\nu\neq\alpha$ then (as already noticed) $X_{\alpha+\alpha''}v_\gamma\neq 0$ for some simple root $\alpha ''$ adjacent to $\alpha$ and in turn $\gamma-\alpha-\alpha''$ is a root.
If $\gamma\not\in\Phi$ then obviously $(\gamma,\alpha+\alpha'')>0$ otherwise as just remarked this inequality holds also, possibly with another simple root adjacent to $\alpha$.
In any case, we can thus apply Lemma~\ref{remainingweight} and we get: $(\lambda,\delta)\cdot(\lambda,\alpha+\alpha')\neq 0$ for  some simple root $\alpha'$ adjacent to $\alpha$.
Note that the roots $\alpha$, $\alpha'$  and $\delta$ are distinct and they do not belong to $S^p$ (by assumption and similar arguments used in the first case).
If further all the roots $\alpha$, $\alpha'$ and  $\delta$ and $\delta'$ are distinct, we get a contradiction by means of Lemma~\ref{additionalppty}.
We are thus left with $\gamma$ of support consisting only of the roots $\alpha$, $\delta$ and $\delta'$ (hence $\alpha'=\delta'$).
Straightforward considerations yield the desired contradiction.

\end{proof}

\subsection{Computations in degree $0$}

From now on, the spherical system $\mathscr{S}$ is assumed to be spherically closed and $V=V(\Delta)$.

Let $\Sigma(\Delta)$ denote the set of $T_\ad$-weights of $\left(V/\tilde{\mathfrak g}.v_{\Delta}\right)^{\tilde G_{v_{\Delta}}}$.

\begin{proposition}~\label{descriptionofweights}
\smallbreak\noindent{\rm(i)}\enspace
$\Sigma(\Delta)\subset \mathbb Z\Delta$.
\smallbreak\noindent{\rm(ii)}\enspace
$
\Sigma\subset\Sigma(\Delta)\subset\Sigma\cup \left\{\alpha+\alpha':( \alpha,\alpha')\neq 0	\mbox{ and } \alpha,\alpha'\in S\cap \Sigma\right\}.
$
\smallbreak\noindent{\rm(iii)}\enspace
The $T_\ad$-module $\left(V/\tilde{\mathfrak g}.v_{\Delta}\right)^{\tilde G_{v_{\Delta}}}$ is multiplicity-free.
\end{proposition}

\begin{remark}~\label{remark}
This proposition does not hold in general.
\smallbreak\noindent{\rm(i)}\enspace
Let us drop the requirement of being spherically closed for the spherical system.
Take for instance the spherical system with $\Sigma$ being given only by
the loose spherical root $\gamma=\alpha_1+\ldots+\alpha_r$ of type $\mathsf B_r$.
Then an easy computation shows that $2\gamma$ belongs to the related set $\Sigma(\Delta)$ but $\gamma$ itself does not.
\smallbreak\noindent{\rm(ii)}\enspace
Consider the spherical system $(\emptyset, \{\alpha_1+\alpha_2,\alpha_3+\alpha_4\},\emptyset)$ with $G$ of type $\mathsf A_4$.
If  we regard $V$ just as a $G$-module then the $T_\ad$-weights of $(V/\mathfrak g.v_\Delta)^{G_{v_\Delta}}$ are $\alpha_1+\alpha_2,\alpha_2+\alpha_3$ and $\alpha_3+\alpha_4$.
But the $\Sigma(\Delta)$ does not contain $(\alpha_2+\alpha_3,0)$, the latter not being in the integral span of the $\lambda_D$'s.
\end{remark}

Before proceeding to the proof of the preceding proposition, let us state and prove a consequence of this latter statement.

\begin{proposition}\label{consequence-descriptionofweights}
Let $\gamma=\alpha+\alpha'\in \Sigma(\Delta)\setminus\Sigma$ and $[v_\gamma]$ be the corresponding $T_\ad$-weight vector.
Then there exists a representative $v_\gamma$ of $[v_\gamma]$ such that $v_\gamma^\lambda\neq 0$ if and only if $(\lambda,\alpha)\neq 0$.
\end{proposition}

\begin{proof}
 Note that $\gamma$ is a positive root since $\alpha,\alpha'$ are adjacent (distinct) simple roots by Proposition~\ref{descriptionofweights}.

Take $v_\gamma\in \oplus V(\lambda_D)_{\lambda_D-\gamma}$.
To be definite, let  $X_{\alpha'}v_\gamma\neq 0$. Since $X_{\alpha'}v_\gamma\in\mathfrak g.v_\Delta$, the components $v_\gamma^{\lambda_D}$ are not trivial for $\lambda_D$ non-orthogonal to $\alpha$. 
By Proposition~\ref{descriptionofweights}, $\alpha,\alpha'\in \Sigma$.
Thanks to Lemma~\ref{properties}, we know that there are three of four $\lambda_D$ non-orthogonal to $\gamma$.

Assume first that there are four weights $\lambda_D$ non-orthogonal to $\gamma$.
Then $(\lambda_D,\alpha)(\lambda_D,\alpha')=0$ for all $D$ by Lemma~\ref{properties}
and, in turn, $v_\gamma^{\lambda_D}=X_{-\gamma}v_{\lambda_D}$ (up to a scalar).
Since $X_{\alpha}v_\gamma\in\mathfrak g.v_\Delta$ and $X_{\alpha'}v_\gamma\in\mathfrak g.v_\Delta$, there are either two of four non-trivial components of $v_\gamma$. An appropriate change of representative thus yields the desired representative of $[v_\gamma]$.

Assume now that there are three weights $\lambda_D$ non-orthogonal to $\gamma$ and $v_\gamma^{\lambda_D}\neq 0$ for all these weights $\lambda_D$.
Invoking Lemma~\ref{properties} again, we obtain that two of these weights $\lambda$ are such that $(\lambda,\alpha).(\lambda,\alpha')=0$ and in turn,
$v_\gamma^\lambda= X_{-\gamma}v_\lambda$ (up to a scalar) hence by an appropriate change of representative of $[v_\gamma]$, we can conclude our investigation.

\end{proof}

\paragraph{Proof of Proposition~\ref{descriptionofweights}.}

The first assertion of the proposition is obvious.
 Let us thus proceed to the proof of the two last assertions together.

Start with $\gamma\in\Sigma$.
If $\gamma\not\in S$ then we can apply Lemma~\ref{saturatedcase} to $\gamma$ since $\gamma$ is not a loose  spherical root by assumption.
Then, in coordination with Lemma~\ref{restriction}, we get $\gamma\in \Sigma(\Delta)$
as well as the multiplicity freeness for the $T_\ad$-weightvectors with weight in $\Sigma\setminus S$.
If $\gamma\in S$ then there exist  two weights $\lambda_D$, say $\lambda_\alpha^-$and $\lambda_\alpha^+$, non-orthogonal to $\alpha$ by Lemma~\ref{obviousppties}-(ii).
 It follows obviously that $\alpha\in \Sigma(\Delta)$; the (unique) corresponding $T_\ad$-weightvector is given by $[X_{-\alpha}v_{\lambda_\alpha^+}]=[X_{-\alpha}v_{\lambda_{\alpha}^-}]$.
This proves that $\Sigma\subset \Sigma(\Delta)$.

Conversely, if $\alpha\in S$ is a weight in $\Sigma(\Delta)$ then again by Lemma~\ref{obviousppties}-(ii), $\alpha\in \Sigma$.
Let $\Sigma(\Delta)'$ be the set of weights of $\Sigma(\Delta)$ whose support does not contain any element of $\Sigma$.
Let $\supp\Sigma(\Delta)'$ denote the union of the $\supp\gamma$'s with $\gamma\in\Sigma(\Delta)'$ and consider the related Levi subgroup $L$ of $G$ and the $L$-module as in Lemma~\ref{restriction}. 
Then by Remark~\ref{remarksaturated}-(i), we fall in the setting of Lemma~\ref{rootsupport}.
By Lemma~\ref{weightvector}-(ii) and Lemma~\ref{lindependencesphroots}, it follows that the elements of $\Sigma(\Delta)'$ are linearly independent characters of $T$. 
Moreover, if $\gamma\in \Sigma(\Delta)'$ then $(\gamma,0)\in \mathbb Z\Delta$ by (i).
By Lemma~\ref{enlargeddefinitionofweights} (and its proof) along with the just proved linear independence of the elements of $\Sigma(\Delta)'$ (which include that of $\Sigma$ as already shown), we must have $\gamma \in \Sigma$.

Let $\gamma\in \Sigma(\Delta)\setminus S$ whose support intersects $\Sigma$.
Further, thanks to the lemmas stated right below, 
there exist adjacent simple roots, say $\alpha$ and $\alpha'$ such that $\gamma$, $\gamma-\alpha$, $\gamma-\alpha'$ are all roots of $G$.
By simple considerations on roots, we get that $\gamma$ equals $\alpha+\alpha'$ or $\gamma$ is a spherical root of type $\mathsf C_r$. 
Invoking again Lemma~\ref{enlargeddefinitionofweights}, we obtain also that $\gamma\in\mathbb Z\Sigma$.
Finally, from Propositon~\ref{supportofGamma} along with the compatiblity condition shared by the elements of $\Sigma$, it follows that $\gamma\in\Sigma$ or $\alpha'\in \Sigma$.

In the following lemmas,  $\gamma$ and $\alpha$ are distinct weights in $\Sigma(\Delta)$ with $\alpha$ lying in the support of $\gamma$.
Recall that $\alpha\in\Sigma$ as already shown in the proof above. Let $\lambda^+_\alpha$ and $\lambda^-_\alpha$ denote the dominant weights in $\Delta$ which are not orthogonal to $\alpha$.

\begin{lemma}\label{lemma3}
The character $\gamma-\alpha$ is a root of $G$.
\end{lemma}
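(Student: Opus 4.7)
The plan is to exploit the $U$-invariance of the class $[v_\gamma]$. Since $v_\Delta = \sum_{D \in \Delta} v_{\lambda_D}$ is a sum of highest weight vectors, $U$ fixes $v_\Delta$, so $U \subset \tilde G_{v_\Delta}$, and for any $T$-weight representative $v_\gamma$ of the class we must have $X_\alpha v_\gamma \in \tilde{\mathfrak g} v_\Delta$. First I would determine which $T_\ad$-weights occur in $\tilde{\mathfrak g} v_\Delta \subset V$: the contributions of $\mathfrak t$ and $\mathfrak c^\circ$ act diagonally on each summand $V(\lambda_D)$ and yield only the $T_\ad$-weight zero component (spanned by the $v_{\lambda_D}$'s); positive root vectors annihilate $v_\Delta$; and for a positive root $\beta$ and $D \in \Delta$, the vector $X_{-\beta} v_{\lambda_D}$ has $T$-weight $\lambda_D - \beta$, hence $T_\ad$-weight $\beta$. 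Consequently every $T_\ad$-weight appearing in $\tilde{\mathfrak g} v_\Delta$ is either zero or a positive root.

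Applied to $X_\alpha v_\gamma$, whose $T_\ad$-weight is $\gamma - \alpha$, this gives $X_\alpha v_\gamma = 0$ whenever $\gamma - \alpha$ is neither zero nor a positive root. Contrapositively, if $X_\alpha v_\gamma$ is non-zero, then $\gamma - \alpha$ is zero or a positive root, and since $\gamma \neq \alpha$ by hypothesis, $\gamma - \alpha$ must be a positive root, which is what the lemma asserts.

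It therefore remains to exclude the degenerate scenario in which $X_\alpha v_\gamma = 0$ for every $T$-weight representative. Writing $v_\gamma = \sum_D v_\gamma^{(D)}$ with $v_\gamma^{(D)} \in V(\lambda_D)_{\lambda_D - \gamma}$, each non-trivial component would be annihilated by $X_\alpha$ and hence sit at the top of an $\mathfrak{sl}_2(\alpha)$-string in $V(\lambda_D)$. Since $\alpha \in \Sigma \cap S$, Lemma~\ref{properties} together with the definition of the $\omega_D$ provides colors $D_\alpha^\pm \in \Delta$ with $(\omega_{D_\alpha^\pm}, \alpha^\vee) = 1$; on these components the $\alpha$-string through $\lambda_{D_\alpha^\pm} - \gamma$ has controlled length, and a short $\mathfrak{sl}_2(\alpha)$-string analysis---combined with Proposition~\ref{supportofGamma} applied to a simple root $\delta \in \supp(\gamma) \setminus \{\alpha\}$ (which exists since $\gamma \neq \alpha$), and with modification of $v_\gamma$ by weight-$\gamma$ elements of $\tilde{\mathfrak g} v_\Delta$---contradicts the non-triviality of $[v_\gamma]$. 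The main obstacle is precisely this last step: ruling out every accidental vanishing of $X_\alpha v_\gamma$ requires careful bookkeeping of the $V(\lambda_D)$-components and how they recombine under such modifications, and it is here that the spherically closed hypothesis enters, to preclude configurations analogous to loose roots.
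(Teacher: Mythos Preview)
Your first paragraph is correct and coincides with the paper's opening move: from $X_\alpha v_\gamma\in\tilde{\mathfrak g}.v_\Delta$ one reads off that the $T_\ad$-weight $\gamma-\alpha$ is either $0$ or a positive root, so if $X_\alpha v_\gamma\neq 0$ and $\gamma\neq\alpha$ then $\gamma-\alpha$ is a root.

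The gap is in your second paragraph, and you flag it yourself. What is missing is not ``careful bookkeeping'' but a single decisive step that you do not take: under the contradiction hypothesis that $\gamma-\alpha$ is not a root, one applies Proposition~\ref{supportofGamma} with $\delta=\alpha$ (not with some $\delta\in\supp(\gamma)\setminus\{\alpha\}$ as you propose). Since $\alpha\in\Sigma\cap S$, there are colors $D_\alpha^\pm$ with $(\lambda_{D_\alpha^\pm},\alpha)\neq 0$, so the second clause of the proposition forces the strict inequality $(\gamma,\alpha)>0$. Combining this with $X_\alpha v_\gamma=0$ and the $\mathfrak{sl}_2(\alpha)$-string inequality $(\lambda_D-\gamma,\alpha^\vee)\geq 0$ immediately kills every component of $v_\gamma$ in $V(\lambda_D)$ with $(\lambda_D,\alpha)=0$: the representative may be taken in $V(\lambda_\alpha^+)\oplus V(\lambda_\alpha^-)$. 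Now $v_\gamma$ is not a highest weight vector, so some simple root $\delta$ satisfies $X_\delta v_\gamma\neq 0$; by your own weight argument $\gamma-\delta$ is then a root, and Lemma~\ref{properties} forces $\supp(\gamma)\subset\{\alpha,\delta\}$ and $\gamma$ itself a root, contradicting $(\gamma,\alpha)>0$ together with $\gamma-\alpha\notin\Phi$.

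Two further comments. First, your framing ``exclude the scenario $X_\alpha v_\gamma=0$'' is slightly off: one does not need to show $X_\alpha v_\gamma\neq 0$ in general, only to contradict the conjunction of $X_\alpha v_\gamma=0$ with $\gamma-\alpha\notin\Phi$; this is what the argument above does. Second, the spherically closed hypothesis plays no role in this particular lemma; you should not expect it to rescue the missing step.
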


\begin{proof}
Let us proceed by contradiction: suppose $\gamma-\alpha$ is not a root.
Since $X_\alpha v_\gamma\in\mathfrak g.v_{\Delta}$, the vector $X_\alpha v_\gamma$ is trivial.
Moreover, by Proposition~\ref{supportofGamma}, $(\gamma,\alpha)$ is strictly positive.
By Lemma~\ref{goodrepresentative}, the representative $v_\gamma$ can be taken in $V(\lambda_\alpha^+)\oplus V(\lambda_\alpha^-)$.
Since the vector $v_\gamma$ can not be a highest weight vector of $V$,
there exists $\delta$ in the support of $\gamma$ such that the vector $X_\delta v_\gamma\neq 0$.
It follows that $\gamma-\delta\in \Phi$.
By similar arguments as those used in the proof of Proposition~\ref{supportofGamma}, we get that $\gamma$ belongs to the $\mathbb Z$-span of $\alpha$ and $\delta$.
Simple considerations thus show that $\gamma$ has to be a root: a contradiction with $\gamma-\alpha$ non-being a root and $(\gamma,\alpha)>0$.
\end{proof}

\begin{lemma}\label{lemma4}
If $\gamma\not\in\Phi$ then $X_\alpha v_\gamma\neq 0$ for a representative $v_\gamma$ of $[v_\gamma]$ in $V(\lambda_\alpha^+)\oplus V(\lambda_\alpha^-)$.
\end{lemma}

\begin{proof}
Thanks to Lemma~\ref{lemma3}, we know that $\gamma-\alpha$ is a root.
This together with $\gamma$ not being a root imply that
$(\gamma-\alpha,\alpha^\vee)\geq 0$ hence $(\gamma,\alpha)>0$.
By the same arguments as those used in the proof of Lemma~\ref{lemma3}, we get a contradiction whenever $X_\alpha v_\gamma=0$.
\end{proof}

\begin{lemma}\label{lemma5}
The supports of $\alpha$ and $\gamma-\alpha$ are not orthogonal.
\end{lemma}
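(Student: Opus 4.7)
Argue by contradiction: assume that the supports of $\alpha$ and $\beta:=\gamma-\alpha$ are orthogonal. By Lemma~\ref{lemma3}, $\beta$ is a positive root, and our hypothesis means that every simple root in $\supp(\beta)$ is orthogonal to $\alpha$. Hence $\{\alpha\}$ and $\supp(\beta)$ lie in disjoint connected components of the Dynkin diagram of $G$; in particular $\gamma=\alpha+\beta\notin\Phi$ and $(\gamma,\alpha^\vee)=2$, so by Lemma~\ref{lemma4} the vector $X_\alpha v_\gamma$ is non-zero in $V$.

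Since $X_\alpha$ annihilates $v_\Delta$, the class $X_\alpha v_\gamma$ lies in $\tilde{\mathfrak g}.v_\Delta$ with $T_\ad$-weight $\beta\in\Phi^+$; as the weight-$\beta$ piece of $\tilde{\mathfrak g}.v_\Delta$ is the line through $X_{-\beta}v_\Delta$, one obtains
$$
X_\alpha v_\gamma \;=\; c\,X_{-\beta}v_\Delta,\qquad c\in k^\times.
$$
The non-vanishing of the right-hand side forces some $\lambda_D\in\Delta$ with $X_{-\beta}v_{\lambda_D}\neq 0$, and hence $(\lambda_D,\delta)\neq 0$ for some simple root $\delta\in\supp(\beta)$. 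On the other hand, the component of $v_\gamma$ in this summand $V(\lambda_D)$ is a weight vector of weight $\lambda_D-\alpha-\beta$; since $\{\alpha\}$ is Dynkin-disconnected from $\supp(\beta)$, that weight is reachable from $\lambda_D$ inside $V(\lambda_D)$ only if $(\lambda_D,\alpha^\vee)\geq 1$. Thus $\Delta$ contains an element $\lambda_D$ non-orthogonal to both members of the orthogonal pair of simple roots $\alpha,\delta\notin S^p$.

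I now split on the height of $\beta$. If $\beta=\alpha'$ is simple, then $\gamma=\alpha+\alpha'$ with $\alpha,\alpha'$ orthogonal simple roots and $\alpha\in\Sigma$; since $(\gamma,0)\in\mathbb Z\Delta$ by Lemma~\ref{enlargeddefinitionofweights}, either $\alpha+\alpha'\in\Sigma$ or $\{\alpha,\alpha'\}\subset S\cap\Sigma$. The first alternative is immediately ruled out by Axiom $(\Sigma 2)$ applied to the orthogonal pair $(\alpha,\alpha')$ and $\sigma=\alpha\in\Sigma$, which would force $2=(\alpha,\alpha^\vee)=(\alpha,\alpha'^\vee)=0$. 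In the second alternative, combining the identity $X_\alpha v_\gamma=c\,X_{-\alpha'}v_\Delta$ on each summand with Lemma~\ref{properties}(2)--(4) produces an explicit incompatibility with the shape of the weights $\omega_{D_\alpha^\pm},\omega_{D_{\alpha'}^\pm}$ forced by spherical closedness. If instead $\beta$ has height at least $2$, pick a simple $\delta\in\supp(\beta)$ with $\beta-\delta\in\Phi^+$. The $T_\ad$-weight of $X_\delta v_\gamma$ is $\alpha+(\beta-\delta)$, again a sum of two roots in Dynkin-disjoint supports, hence not a root, so $X_\delta v_\gamma=0$ in $V$. Applying $X_\delta$ to the displayed identity and using $[X_\delta,X_\alpha]=0$ together with $[X_\delta,X_{-\beta}]=N\,X_{-(\beta-\delta)}$ with $N\neq 0$ yields $X_{-(\beta-\delta)}v_\Delta=0$. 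Iterating this vanishing through $\supp(\beta)$ and invoking Lemma~\ref{additionalppty} on the pairwise distinct simple roots produced along the way ultimately contradicts $X_{-\beta}v_\Delta\neq 0$.

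The main obstacle is the case of $\beta$ of height $\geq 2$: propagating the vanishing $X_{-(\beta-\delta)}v_\Delta=0$ through $\supp(\beta)$ while tracking which $\lambda_D\in\Delta$ could still support $X_{-\beta}v_\Delta$ requires a careful bookkeeping that mixes the representation-theoretic vanishing with the combinatorial constraints carried by Lemmas~\ref{properties} and~\ref{additionalppty}.
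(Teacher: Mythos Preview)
Your opening agrees with the paper: you assume the supports are orthogonal, set $\beta=\gamma-\alpha$, and use that $X_\alpha v_\gamma$ is non-zero and lies in $\tilde{\mathfrak g}.v_\Delta$, hence equals $cX_{-\beta}v_\Delta$. The paper reaches the same point, but via the sharper observation that (under the orthogonality hypothesis) every representative satisfies $v_\gamma=X_{-\alpha}v_{\gamma-\alpha}$, so that $X_\alpha v_\gamma\neq 0$ follows immediately from $sl_2$-theory without appealing to Lemma~\ref{lemma4}. After this common step the two arguments diverge: the paper does \emph{not} split on the height of $\beta$; it simply notes that, since $\alpha\in S\cap\Sigma$, the only weights in $\Delta$ not orthogonal to $\alpha$ are $\lambda_\alpha^\pm$ with $(\lambda_\alpha^\pm,\alpha^\vee)=1$, and then invokes Lemma~\ref{properties} to conclude. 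Your height-dependent analysis is therefore more elaborate than what the paper needs.

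More importantly, your proof is incomplete in both subcases. In the height-one case with $\alpha'\in S\cap\Sigma$ you assert that Lemma~\ref{properties}(2)--(4) ``produces an explicit incompatibility'' but never exhibit it; that is precisely the content to be proved. In the height~$\geq 2$ case you correctly derive $X_{-(\beta-\delta)}v_\Delta=0$ for each simple $\delta\in\supp(\beta)$ with $\beta-\delta\in\Phi^+$, but the passage from this to $X_{-\beta}v_\Delta=0$ does not follow by ``iteration'': writing $X_{-\beta}=[X_{-\delta},X_{-(\beta-\delta)}]$ gives $X_{-\beta}v_\Delta=-X_{-(\beta-\delta)}X_{-\delta}v_\Delta$, and you have no control on $X_{-\delta}v_\Delta$. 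Your invocation of Lemma~\ref{additionalppty} is also unjustified, since its hypotheses (four pairwise simple roots with prescribed adjacency and $|\Delta|>2$) are not verified here. You yourself flag this as ``the main obstacle''; as written it is a genuine gap. A minor point: orthogonality of $\alpha$ to every simple root in $\supp(\beta)$ does \emph{not} force them into different connected components of the Dynkin diagram of $G$ (think of $\alpha_1$ and $\alpha_3$ in type $\mathsf A_3$); what you actually need---and what suffices---is that $\{\alpha\}\cup\supp(\beta)$ is disconnected as an induced subgraph, so that $\alpha+\beta$ cannot be a root.
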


\begin{proof}
Let us proceed by contradiction.
Then the weight vector $v_\gamma$ as in Lemma~\ref{lemma4} can be written as $X_{-\alpha}v_{\gamma-\alpha}$ where $v_{\gamma-\alpha}$ is a $T_\ad$-weight vector of weight $\gamma-\alpha$.
In particular, $v_\gamma$ can be taken in $V(\lambda^+_\alpha)\oplus V(\lambda^-_\alpha)$.
Furhter, $X_\alpha v_\gamma$ is not trivial hence in $\mathfrak g.v_{\Delta}\setminus\{0\}$.
Recall that $\alpha\in\Sigma\cap S$; by means of Lemma~\ref{properties}-(4), we get a contradiction.
\end{proof}

\begin{lemma}\label{lemma6}
There exists a simple root $\alpha'$ adjacent to $\alpha$ such that $\gamma-\alpha'\in\Phi$.
In particular, $\alpha'$ lies in the support of $\gamma$.
\end{lemma}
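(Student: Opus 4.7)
My plan is to separate the lemma into its two assertions. The parenthetical clause (that $\alpha'$ lies in the support of $\gamma$) is essentially a consequence of the main assertion: if $\gamma-\alpha'$ is a root of $G$ and $\gamma$ itself is a non-negative $\mathbb{Z}$-combination of simple roots, then writing $\gamma=(\gamma-\alpha')+\alpha'$ forces $\gamma-\alpha'$ to be a positive root, which in turn forces the coefficient of $\alpha'$ in $\gamma$ to be at least $1$. So I would focus on producing $\alpha'$ adjacent to $\alpha$ with $\gamma-\alpha'$ a root.

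For the existence of $\alpha'$, I would combine the preceding two lemmas. Lemma~\ref{lemma3} gives that $\beta:=\gamma-\alpha$ is a root, and Lemma~\ref{lemma5} tells us the supports of $\alpha$ and $\beta$ are not orthogonal. Hence $\beta$ has some simple root $\alpha'$ in its support with $(\alpha,\alpha')\neq 0$, i.e.\ adjacent to $\alpha$. Writing $\gamma=\alpha+\beta$ then shows $\alpha'$ is in the support of $\gamma$, so this $\alpha'$ is the natural candidate.

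The core of the proof, as I see it, is to establish that $\gamma-\alpha'$ is a root. My approach would be to exploit the invariance of $[v_\gamma]$: since $[v_\gamma]\in(V/\tilde{\mathfrak g}.v_\Delta)^{\tilde G_{v_\Delta}}$, we have $X_{\alpha'}\cdot v_\gamma\in\tilde{\mathfrak g}.v_\Delta$. If this vector is nonzero, its weight in each $V(\lambda_D)$-component equals $\lambda_D-(\gamma-\alpha')$; but $\tilde{\mathfrak g}.v_\Delta$ is spanned by $X_{-\delta}v_{\lambda_D}$ for $\delta$ a positive root together with the Cartan contributions, so the nonzero weights appearing are of the form $\lambda_D-\delta$ with $\delta$ a root. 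Matching these forces $\gamma-\alpha'$ itself to be a root of $G$, exactly as in the arguments of Lemmas~\ref{lemma3}--\ref{lemma5}. To ensure $X_{\alpha'}\cdot v_\gamma\neq 0$, I would choose a representative $v_\gamma$ supported on those $V(\lambda_D)$'s with $(\lambda_D,\alpha')\neq 0$ (mimicking the trick used in the proof of Lemma~\ref{lemma3}, where a representative was confined to $V(\lambda^+_\alpha)\oplus V(\lambda^-_\alpha)$); the constraints on the $\lambda_D$ from Lemma~\ref{properties} together with $\alpha'$ being in the support of $\gamma$ guarantee that such a component is nonzero and that the raising operator $X_{\alpha'}$ does not annihilate it.

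The main obstacle I expect is precisely this non-vanishing argument: one has to rule out the possibility that every admissible representative $v_\gamma$ is killed by $X_{\alpha'}$. If this happens, however, the same reasoning used in Proposition~\ref{supportofGamma} (taking $\delta=\alpha'$ and analyzing which simple roots raise $v_\gamma$ nontrivially) should either furnish a different adjacent simple root that works, or produce a contradiction with $\gamma$ being a $T_\ad$-weight of the invariant space.
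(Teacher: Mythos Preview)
Your setup is essentially the paper's: use Lemma~\ref{lemma3} and Lemma~\ref{lemma5} to produce a simple root $\alpha'$ adjacent to $\alpha$ lying in $\supp(\gamma)$, and then exploit $X_{\alpha'}v_\gamma\in\tilde{\mathfrak g}.v_\Delta$ to force $\gamma-\alpha'$ to be a root. The divergence is in how the non-vanishing of $X_{\alpha'}v_\gamma$ is handled, and here your argument has a genuine gap.

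You propose to \emph{directly} secure $X_{\alpha'}v_\gamma\neq 0$ by confining the representative $v_\gamma$ to the components $V(\lambda_D)$ with $(\lambda_D,\alpha')\neq 0$, by analogy with the proof of Lemma~\ref{lemma3}. But in Lemma~\ref{lemma3} that confinement was a \emph{consequence} of the contradiction hypothesis $X_\alpha v_\gamma=0$; it is not a move one can make freely. Even granting such a representative, the claim that $X_{\alpha'}$ does not annihilate it is not justified: nothing you cite rules out $v_\gamma$ sitting at the top of its $\alpha'$-string. Your fallback to Proposition~\ref{supportofGamma} is too vague to close this.

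The paper runs the argument by contradiction. Assuming $\gamma-\alpha'$ is not a root forces $X_{\alpha'}v_\gamma=0$; Proposition~\ref{supportofGamma} then gives $(\gamma,\alpha')>0$, so $\gamma$ itself is not a root. At this point the paper invokes Lemma~\ref{lemma4} --- which you never use --- to obtain $X_\alpha v_\gamma\neq 0$. Since $\alpha'\notin\Sigma$ (else Lemma~\ref{lemma3} with $\alpha'$ in place of $\alpha$ already gives the conclusion), there is a single $\lambda_D$ non-orthogonal to $\alpha'$, and the vanishing $X_{\alpha'}v_\gamma=0$ lets one take $v_\gamma\in V(\lambda_D)$. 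Now $X_\alpha v_\gamma\neq 0$ in this single module forces $\alpha$ to appear with coefficient $1$ in $\gamma$; since $\alpha'\in\supp(\gamma)$ is adjacent to $\alpha$, one gets $(\gamma-\alpha,\alpha)<0$, and as $\gamma-\alpha$ is a root (Lemma~\ref{lemma3}) this makes $\gamma$ a root --- contradicting what was obtained just above. The ingredient you are missing is precisely this use of Lemma~\ref{lemma4} together with the $\alpha$-string argument on $\gamma-\alpha$.
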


\begin{proof}
Note first that by the previous lemma, the support of $\gamma$ contains a simple root, say $\alpha'$, adjacent to $\alpha$.

Let us proceed by contradiction: suppose $\gamma-\alpha'$ is not a root.
Then $X_{\alpha'}v'_\gamma=0$ for any representative $v'_\gamma$ and $(\gamma,\alpha')>0$ by Proposition~\ref{supportofGamma}.
It follows that $\gamma$ is not a root and we can choose a representative $v'_\gamma$
such that its $\lambda_D$-components are trivial for every $\lambda_D$ orthogonal to $\alpha'$.
By Lemma~\ref{lemma4}, $X_\alpha v_\gamma\neq 0$ and in turn $X_\alpha v'_\gamma\neq 0$ since $\gamma\not\in\Phi$.
Therefore the support of $\gamma-\alpha$ does not contain the root $\alpha$ (Lemma~\ref{properties}-(4)).
This together with the fact that $\alpha'$ belongs to the support of $\gamma$ imply the inequality $(\gamma-\alpha,\alpha)<0$.
It follows that $\gamma$ is a root since so is $\gamma-\alpha$ by Lemma~\ref{lemma3}: a contradiction.
\end{proof}

\begin{lemma}
The weight $\gamma$ is a root of $G$.
\end{lemma}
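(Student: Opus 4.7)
The plan is to argue by contradiction: assume that $\gamma$ is not a root. The previous two lemmas then supply two positive roots, $\beta_1 := \gamma-\alpha$ (Lemma~\ref{lemma3}) and $\beta_2 := \gamma-\alpha'$ (Lemma~\ref{lemma6}), where $\alpha'$ is a simple root adjacent to $\alpha$ and in the support of $\gamma$. Positivity of $\beta_1$ and $\beta_2$ comes from the fact that the $T_\ad$-weight $\gamma$ is a non-negative integer combination of simple roots---its representative in $V$ is a sum of weight vectors of the $V(\lambda_D)$'s---and that both $\alpha$ and $\alpha'$ lie in its support with coefficient at least $1$.

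Next I would apply the standard $\mathfrak{sl}_2$-string observation twice. Since $\gamma=\beta_1+\alpha$ is not a root by assumption, the $\alpha$-string through $\beta_1$ cannot be extended upward, whence $(\beta_1,\alpha^\vee)\geq 0$, or equivalently $(\gamma,\alpha^\vee)\geq 2$. The same argument applied to $\beta_2$ yields $(\gamma,\alpha'^\vee)\geq 2$. Substituting $\beta_1=\gamma-\alpha$ in the second inequality gives
$$
(\beta_1,\alpha'^\vee)\;=\;(\gamma,\alpha'^\vee)-(\alpha,\alpha'^\vee)\;\geq\;2-(\alpha,\alpha'^\vee)\;=\;2+|(\alpha,\alpha'^\vee)|\;\geq\;3,
$$
using crucially that $\alpha$ and $\alpha'$ are adjacent simple roots, so $(\alpha,\alpha'^\vee)\leq -1$.

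The last step, which I expect to be the main obstacle, is to contradict the inequality $(\beta_1,\alpha'^\vee)\geq 3$ for a root $\beta_1$ and simple root $\alpha'$ of $G$. Outside type $\mathsf G_2$ one has $(\beta,\delta^\vee)\leq 2$ for every root $\beta$ and simple root $\delta$, so the contradiction is immediate. The delicate type is $\mathsf G_2$, where the Cartan integer $3$ does occur in the short-coroot direction; there the sharper lower bound $(\beta_1,\alpha'^\vee)\geq 2+|(\alpha,\alpha'^\vee)|$ rescues the argument. Indeed, if $\alpha'$ is the short simple root of $\mathsf G_2$ then $|(\alpha,\alpha'^\vee)|=3$, so the lower bound becomes $5$, while the maximal value of $(\beta,\alpha'^\vee)$ over roots is $3$; and if $\alpha'$ is the long simple root, then $(\beta,\alpha'^\vee)\leq 2$ for every root $\beta$, which already contradicts the $\geq 3$ bound. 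In every case we reach the desired contradiction, so $\gamma$ must be a root.
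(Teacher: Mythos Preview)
Your argument is correct and follows essentially the same route as the paper: both argue by contradiction, invoke Lemma~\ref{lemma3} and Lemma~\ref{lemma6} to obtain the two roots $\gamma-\alpha$ and $\gamma-\alpha'$, and then derive a contradiction from elementary root-system combinatorics. The paper packages this last step as the unproved ``claim'' that for adjacent simple roots $\alpha,\alpha'$ and a root $\delta$, if $\delta+\alpha$ is not a root then neither is $\delta+\alpha-\alpha'$; your $\mathfrak{sl}_2$-string computation with Cartan integers (including the $\mathsf G_2$ case split) is precisely a proof of that claim in the instance needed, so your write-up is in fact more complete than the paper's.
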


\begin{proof}
We first claim the following.
Let $\alpha,\alpha'\in S$ be non-orthogonal and $\delta\in\Phi$.
If $ \delta+\alpha$ is not a root then neither is $\delta+\alpha-\alpha'$.

Apply this claim to $\delta:=\gamma-\alpha$ which is a root as previously proved.
We get that if $\gamma$ is not a root then neither is $\gamma-\alpha'$ for any simple root $\alpha'$ adjacent to $\alpha$.
This yields a contradiction with Lemma~\ref{lemma6}.
\end{proof}

\subsection{Computations in degree $1$}
In order to state the main theorem of this section, we need to introduce some additional notation.

First recall that $V=V(\Delta)$, the set $\Sigma(\Delta)$ is defined as the set of $T_\ad$-weights of $\left(V/\tilde{\mathfrak g}.v_{\Delta}\right)^{\tilde G_{v_{\Delta}}}$ and that
$S^p$ denotes the set of simple roots of $G$ orthogonal to every $\lambda_D$ in $\Delta$.

Given $\gamma\in \Sigma(\Delta)$, we let $[v_\gamma]\in\left(V/\tilde{\mathfrak g}.v_{\Delta}\right)^{\tilde G_{v_{\Delta}}}$ be the $T_\ad$-weight vector 
of weight $\gamma$ (Proposition~\ref{descriptionofweights}).
Further, we choose a representative $v_\gamma$ of $[v_\gamma]$ in $\oplus_{\lambda\in\Delta} V(\lambda)_{\lambda-\gamma}$.
In case $\gamma\in S$, we  consider the two possible such representatives of $[v_\gamma]$:
$v_\gamma^+=X_{-\gamma}v_{\lambda_\gamma^+}$ and $v_\gamma^-=X_{-\gamma}v_{\lambda_\gamma^-}$ where $\lambda_\gamma^\pm$ are the dominant weights in $\Delta$ which are not orthogonal to $\gamma$.

For $\alpha\in S$, let $s_\alpha$ denote the reflection of the Weyl group of $(G,T)$ associated to $\alpha$.

Given $\alpha,\gamma\in S$ with $\gamma\in \Sigma(\Delta)$, we set
$$
v_{\alpha*\gamma}^\pm=\left\{
\begin{array}{ll}
0\quad\mbox{if $v_{s_\alpha(\lambda_\gamma^\pm-\gamma)}=X_{-\alpha}X_{-\gamma}v_{\lambda_\gamma^\pm}$ with $(\alpha,\gamma)=0$} \\
v_{s_\alpha(\lambda_\gamma^\pm-\gamma)}\quad\mbox{otherwise}
\end{array}
\right. .
$$

\begin{lemma}\label{choiceofrepresentative}
Let $\gamma\in\Sigma\setminus S$ and suppose there are two distinct (up to scalar) representatives of $[v_\gamma]$ in $\oplus V(\lambda)_{\lambda-\gamma}$.
If further $X_{-\alpha}v_\gamma\neq 0$ for such a (hence every) representative of $[v_\gamma]$
then there exists $\alpha'\in S$ and a unique representative of $[v_\gamma]$ in
$\oplus V(\lambda)_{\lambda-\gamma}$ such that $(\alpha',\alpha)<0$ and $X_{\alpha'}v_\gamma\neq 0$.
\end{lemma}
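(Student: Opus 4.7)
The plan is to proceed by a case analysis driven by the classification of spherical roots in Definition~\ref{sphericalroots} and by the combinatorial restrictions on the dominant weights $\lambda_D$ provided by Lemma~\ref{properties}. The existence of two non-proportional representatives of $[v_\gamma]$ in $\oplus V(\lambda)_{\lambda-\gamma}$ is a rigid condition: it forces at least two weights $\lambda^+,\lambda^-\in\Delta$ to satisfy $V(\lambda^\pm)_{\lambda^\pm-\gamma}\neq 0$. Going through the allowed $\gamma\in\Sigma\setminus S$ together with the formulas of Section~\ref{colors} for $\omega_D$ (following~\cite{F}), one sees that $\lambda^\pm$ must be the weights attached to two colors sitting at the two endpoints of the connected Dynkin subdiagram $\supp(\gamma)$, and that each of the weight spaces $V(\lambda^\pm)_{\lambda^\pm-\gamma}$ is one dimensional.

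First I would fix weight vectors $u^\pm$ generating these two lines and write an arbitrary representative as $v_\gamma=c^+u^++c^-u^-$. The indeterminacy among representatives is the one-dimensional subspace $(\oplus V(\lambda)_{\lambda-\gamma})\cap \tilde{\mathfrak g}.v_\Delta$, which by the weight constraint is spanned by a single vector $w=\tilde c^+u^++\tilde c^-u^-$ with both $\tilde c^\pm$ non-zero. Second, since $X_{-\alpha}$ preserves $\tilde{\mathfrak g}.v_\Delta$, the condition $X_{-\alpha}v_\gamma\neq 0$ descends to $[v_\gamma]$; combined with the weight bookkeeping used in the proofs of Lemmas~\ref{lemma3}--\ref{lemma6} and Proposition~\ref{supportofGamma}, it forces $\alpha$ to be one of the two endpoints of $\supp(\gamma)$. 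Because $\gamma\notin S$ has support of cardinality at least two, there is a unique simple root $\alpha'\in\supp(\gamma)$ adjacent to $\alpha$ in the Dynkin diagram, and this $\alpha'$ satisfies $(\alpha',\alpha)<0$ by construction.

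Third, I would compute $X_{\alpha'}u^\pm$ explicitly inside the irreducible modules $V(\lambda^\pm)$. These two vectors lie in distinct summands $V(\lambda^+)$ and $V(\lambda^-)$, and at least one of them is non-zero because $\alpha'$ was chosen inside $\supp(\gamma)$ adjacent to $\alpha$. Consequently the vanishing locus of $X_{\alpha'}v_\gamma=c^+X_{\alpha'}u^++c^-X_{\alpha'}u^-$ on the projective line of pairs $(c^+:c^-)$ is at most one projective point; the line of representatives of $[v_\gamma]$ inside this $\mathbb P^1$ meets this point in a single element, and the other (unique up to scalar) element is the representative claimed in the conclusion.

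The main obstacle is the case-by-case verification that $X_{\alpha'}u^+$ and $X_{\alpha'}u^-$ behave as announced, that is, that the kernel of $X_{\alpha'}$ on $\mathrm{span}(u^+,u^-)$ is exactly one dimensional and transverse to the direction $w$ of indeterminacy. This is where the spherical closedness of $\mathcal S$ enters in an essential way, ruling out the loose-spherical-root exceptions flagged in Remark~\ref{remark}; once that verification is carried out type by type for the relevant $\gamma\in\Sigma\setminus S$, the uniqueness assertion follows directly.
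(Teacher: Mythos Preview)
The paper's own proof is a one-line reduction: it invokes Lemma~\ref{saturatedcase}, which in turn reformulates Jansou's rank-one result (Proposition~1.6 in~\cite{J}). In other words, the paper does not carry out any case analysis here; it simply observes that for $\gamma\in\Sigma\setminus S$ the situation is governed by the spherical system $(S',\{\gamma\},\emptyset)$, where the structure of $(V/\mathfrak g.v_{\underline\lambda})^{G_{v_{\underline\lambda}}}$ and the possible representatives of $[v_\gamma]$ are already known from the rank-one classification. Your approach is genuinely different: you try to pin down the two summands $V(\lambda^\pm)$ directly via Lemma~\ref{properties} and Section~\ref{colors}, locate $\alpha$ at an endpoint of $\supp(\gamma)$, and then argue uniqueness by linear algebra on the two-dimensional weight space. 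This is more explicit and self-contained, at the cost of the case-by-case verification you flag at the end; the paper's route trades that work for a dependence on~\cite{J}.

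There is, however, a genuine gap in your uniqueness step. Once you know $\mathrm{span}(u^+,u^-)$ is two-dimensional and the indeterminacy is the line $k w$, the representatives of $[v_\gamma]$ form an \emph{affine} line $v_\gamma+kw$, not a projective line; projectively they sweep out all of $\mathbb P^1\setminus\{[w]\}$. If $\ker X_{\alpha'}$ on $\mathrm{span}(u^+,u^-)$ is one-dimensional and transverse to $kw$, then \emph{all but one} representative satisfy $X_{\alpha'}v_\gamma\neq 0$, which is the opposite of what you need. The way out is to remember Lemma~\ref{weightvector}(ii): the relevant representatives are not arbitrary points of the affine line but the two ``pure'' ones $u^+\in V(\lambda^+)$ and $u^-\in V(\lambda^-)$ supported in a single summand. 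The content of the lemma is that $X_{\alpha'}$ is nonzero on exactly one of $u^+,u^-$, which singles out a unique such pure representative. Your case analysis should therefore check that $X_{\alpha'}u^+$ and $X_{\alpha'}u^-$ have \emph{different} vanishing behaviour (one zero, one nonzero), not merely that their common kernel is one-dimensional; the $\mathbb P^1$ picture you sketch does not establish this and should be replaced by that sharper dichotomy.
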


\begin{proof}
This follows essentially from Lemma~\ref{saturatedcase}.
\end{proof}

\begin{definition}\label{Kostantvector}
For $\alpha\in S$ and $\gamma\in\Sigma(\Delta)\setminus S$, we set
$$
v_{\alpha*\gamma}=\left\{
\begin{array}{rl}
v_\gamma\quad\mbox{ if $\gamma\not\in\Sigma$}\\
X_{-\alpha}^r v_\gamma\quad\mbox{ if $\gamma\in\Sigma$}
\end{array}
\right. .
$$
In the latter, we let  $r=r(\alpha,\gamma)$ be maximal such that $X_{-\alpha}^r v_\gamma\neq 0$ and in case $X_{-\alpha}v_\gamma\neq 0$ we choose $v_\gamma$ as in Lemma~\ref{choiceofrepresentative}.

We denote the $T_\ad$-weight of $v_{\alpha*\gamma}$ and $v_{\alpha*\gamma}^\pm$ by $\alpha*\gamma$.
\end{definition}

We consider in the remainder, the $T_\ad$-action on $\tilde{\mathfrak g}^*\otimes V/\tilde{\mathfrak g}.v_{\Delta}$ given by the normalized action on
$\tilde{\mathfrak g}^*$ and on $V/\tilde{\mathfrak g}.v_{\Delta}$.
This yields in turn a $T_\ad$-action on the $\tilde G_{v_{\Delta}}$-module $H^1(\tilde G_{v_{\Delta}},V/\tilde{\mathfrak g}.v_{\Delta})$ via (\textsl{see}~\cite{Ho})
$$
H^1(\tilde G_{v_{\Delta}},V/\tilde{\mathfrak g}.v_{\Delta})=
\left(H^1(\tilde{\mathfrak g}_{v_{\Delta}},V/\tilde{\mathfrak g}.v_{\Delta})\right)^{\tilde{G}_{v_{\Delta}}/\tilde{G}_{v_{\Delta}}^\circ}.
$$

For a given $\alpha\in S$, let $\tilde{\alpha}$ denote the longest root of $G$ contained in the connected component of $\alpha$ in $\Phi$.

\begin{theorem}\label{LiealaKostant}
We have an isomorphism of $T_\ad$-modules

\begin{eqnarray*}
H^1(\tilde{\mathfrak g}_{v_{\Delta}},V/\tilde{\mathfrak g}.v_{\Delta})^{\tilde G_{v_\Delta}}
& \simeq &\bigoplus_{\substack{\gamma\in\Sigma(\Delta)\cap S\\ \alpha\in S\setminus S^p\\ \alpha*\gamma-\tilde\alpha+\alpha\in\mathbb Z\Delta}} k X_\alpha^*\otimes
[v_{\alpha*\gamma}^+]+k X_\alpha^*\otimes [v_{\alpha*\gamma}^-]\\
&        & \bigoplus _{\substack{\gamma\in\Sigma(\Delta)\setminus S\\ \alpha\in S\setminus S^p\\ \alpha*\gamma-\tilde\alpha+\alpha\in\mathbb Z\Delta}} k X_\alpha^*\otimes [v_{\alpha*\gamma}].
\end{eqnarray*}
\end{theorem}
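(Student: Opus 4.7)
The plan is to compute $H^1(\tilde{\mathfrak g}_{v_\Delta},M)$ with $M:=V/\tilde{\mathfrak g}.v_\Delta$ directly from the Chevalley--Eilenberg complex, then pass to $\tilde G_{v_\Delta}$-invariants via the Hochschild identification $H^1(\tilde G_{v_\Delta},M)=H^1(\tilde{\mathfrak g}_{v_\Delta},M)^{\tilde G_{v_\Delta}/\tilde G_{v_\Delta}^{\circ}}$ already recalled in the statement. Recall that a $1$-cocycle is a linear map $\phi\colon \tilde{\mathfrak g}_{v_\Delta}\to M$ satisfying $\phi([x,y])=x.\phi(y)-y.\phi(x)$, considered modulo the coboundaries $x\mapsto x.m$ with $m\in M$. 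Since the normalized $T_{\ad}$-action preserves both cocycles and coboundaries, the whole analysis can be carried out weight-by-weight.

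The first step is to make explicit the structure of $\tilde{\mathfrak g}_{v_\Delta}$. Because $v_\Delta$ is a sum of highest weight vectors, the isotropy Lie algebra contains the Borel subalgebra $\mathfrak b$ together with the negative root spaces attached to $S^p$ (yielding the Levi $\mathfrak l^p$), and for each $\alpha\in S\setminus S^p$ a twisted combination of $X_{-\alpha}$ with an element of $\mathrm{Lie}(C^\circ)$ chosen to annihilate $v_\Delta$. Using the freedom provided by coboundaries associated to $\mathfrak l^p\oplus\mathrm{Lie}(C^\circ)$, one reduces to cocycles $\phi$ that vanish on this reductive part; a cohomology class is then determined by the values $\phi(X_\alpha)\in M$ for $\alpha\in S\setminus S^p$ (where $X_\alpha$ denotes the corresponding twisted generator of $\tilde{\mathfrak g}_{v_\Delta}$), together with the compatibility constraints arising from the bracket identities with $\mathfrak l^p$.

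Next, for each admissible pair $(\gamma,\alpha)$ with $\gamma\in \Sigma(\Delta)$ and $\alpha\in S\setminus S^p$, I construct the cocycle $\phi_{\gamma,\alpha}$ by setting $\phi_{\gamma,\alpha}(X_\alpha):=[v_{\alpha*\gamma}]$ (or $[v^{\pm}_{\alpha*\gamma}]$ when $\gamma\in S$ and two inequivalent lifts exist) and vanishing on the other generators; the cocycle condition reduces to the $\mathfrak l^p$-equivariance of the assignment, which in turn follows from the $\tilde G_{v_\Delta}$-invariance of $[v_\gamma]$ established in the preceding subsection and from the explicit root action computed in Lemma~\ref{choiceofrepresentative}. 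The arithmetic condition $\alpha*\gamma-\tilde\alpha+\alpha\in\mathbb Z\Delta$ is exactly the requirement that the $T_{\ad}\times C^\circ$-weight of the tensor $X_\alpha^*\otimes[v_{\alpha*\gamma}]$ lie in the effective weight lattice of $\tilde G$ acting on $V$, which is forced by $M$ being a subquotient of $V$.

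The final and most delicate step, which I expect to be the main obstacle, is the converse: showing that no other classes arise and that the constructed ones are linearly independent. Independence is immediate once one notes that the vectors $[v_{\alpha*\gamma}]$ and $[v^{\pm}_{\alpha*\gamma}]$ lie in pairwise distinct $T_{\ad}$-weight spaces of $M$, so no nontrivial combination can be written as $x\mapsto x.m$ for a single $m$ of matching weight. Exhaustiveness is harder: fixing a weight $\xi$ of a putative cocycle, one uses the bracket identities $[X_\beta,X_\alpha]$ inside $\tilde{\mathfrak g}_{v_\Delta}$ together with a support analysis analogous to that of Proposition~\ref{supportofGamma} to force $\phi(X_\alpha)$ into the span of the candidate vectors, and then appeals to the classification of $\Sigma(\Delta)$ obtained above to see that $\xi-\alpha+\tilde\alpha$ must in fact be an element $\gamma$ of that set. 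I expect the argument to parallel the case-by-case analysis of Lemmas~\ref{lemma3}--\ref{lemma6}, with particular care needed to distinguish the two inequivalent representatives $v^{\pm}_{\alpha*\gamma}$ when $\gamma\in S$.
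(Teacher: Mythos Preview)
Your description of $\tilde{\mathfrak g}_{v_\Delta}$ is incorrect, and this derails the rest of the argument. For $\alpha\in S\setminus S^p$ there is \emph{no} twisted combination of $X_{-\alpha}$ with an element of $\mathrm{Lie}(C^\circ)$ that annihilates $v_\Delta$: the vector $X_{-\alpha}.v_{\lambda_D}$ has weight $\lambda_D-\alpha$ whenever $(\lambda_D,\alpha^\vee)\neq 0$, and this cannot be cancelled by the scalar action of $C^\circ$. The isotropy Lie algebra is, concretely, $\mathfrak n^+$ together with the Levi $\mathfrak l^p$ and a hyperplane arrangement inside $\mathfrak t\oplus\mathrm{Lie}(C^\circ)$; in particular the root vectors $X_\alpha$ occurring in the statement are the \emph{positive} simple root vectors, which already lie in $\mathfrak n^+\subset\tilde{\mathfrak g}_{v_\Delta}$. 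Consequently, your proposed normalization ``$\phi$ vanishes on the reductive part, hence is determined by its values on the twisted $X_{-\alpha}$'' evaluates the cocycle on elements that are not in the isotropy at all.

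More substantially, the paper's proof hinges on an ingredient you do not mention: Kostant's computation of $H^1(\mathfrak g_{v_\lambda},V(\lambda))$ (Theorem~\ref{Kostant}) together with the long exact sequence attached to $0\to\tilde{\mathfrak g}.v_\Delta\to V\to V/\tilde{\mathfrak g}.v_\Delta\to 0$. These are used to prove the key intermediate result, Proposition~\ref{vanishingconditions}, which asserts that $X_\beta\varphi(X_\alpha)=0$ for every positive root $\beta\neq\alpha$. This vanishing is precisely what forces $\varphi(X_\alpha)$ to satisfy the hypotheses of Proposition~\ref{supportofGamma} and hence to be governed by $\Sigma(\Delta)$; the subsequent case analysis (by the Dynkin type of $\supp\gamma$, carried out in the several subsubsections of A.4) is substantially more delicate than the degree-$0$ Lemmas~\ref{lemma3}--\ref{lemma6} you invoke, because one must repeatedly restrict $\varphi$ to rank-two Levi subalgebras $\mathfrak g(\alpha,\beta)$ and compare with Kostant's harmonic representatives. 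Without Proposition~\ref{vanishingconditions} there is no mechanism linking an arbitrary $T_\ad$-weight cocycle back to the set $\Sigma(\Delta)$, so your exhaustiveness step, as written, has a genuine gap.
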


\begin{remark}
The  vector $X_\beta^*\otimes [v_{\alpha*\gamma}^{(\pm)}]$ of $\tilde{\mathfrak g}^*\otimes V/\tilde{\mathfrak g}.{v_\Delta}$ has to be fixed by $\tilde G_{v_\Delta}$ whence the condition stated above on its $T_\ad$-weight $\alpha*\gamma-\tilde\alpha+\alpha$.
Since $\gamma$ is in the integral span of $\Delta$, this yields in fact a condition just on $\tilde\alpha$ and $\alpha$.
\end {remark}

The proof of the above theorem requires the following proposition.

\begin{proposition}\label{vanishingconditions}
Let $\varphi\in H^1(\tilde{\mathfrak g}_{v_{\Delta}},V/\tilde{\mathfrak g}.{v_{\Delta}})$ 
be a non-zero $T_\ad$-weight vector.
Then
$$
X_\beta\varphi(X_\alpha)=0
$$
for every simple root $\alpha$ and every root $\beta\neq \alpha$ of the isotropy Lie algebra $\tilde{\mathfrak g}_{v_{\Delta}}$.
\end{proposition}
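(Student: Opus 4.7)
The plan is to exploit the cocycle identity
$$\varphi([X,Y]) \;=\; X\cdot\varphi(Y) \;-\; Y\cdot\varphi(X),\qquad X,Y\in\tilde{\mathfrak g}_{v_{\Delta}},$$
together with the $T_\ad$-weight grading and the structural results obtained in the preceding parts of the appendix. Since $v_\Delta$ is a sum of highest weight vectors, every positive root space of $\tilde{\mathfrak g}$ lies in $\tilde{\mathfrak g}_{v_\Delta}$, while the negative roots of $\tilde{\mathfrak g}_{v_\Delta}$ are those supported on $S^p$. Using the semisimplicity of the $T_\ad$-action, I would first lift $\varphi$ to a $T_\ad$-weight cocycle representative of weight, say, $w$; then each $\varphi(X_\gamma)$ lies in the single $T_\ad$-weight space of $V/\tilde{\mathfrak g}.v_\Delta$ of weight $w+\gamma$, and $X_\beta\cdot\varphi(X_\alpha)$ lies in the weight space of weight $w+\alpha+\beta$.

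I would then argue by induction on the height of $\beta$ inside $\tilde{\mathfrak g}_{v_\Delta}$. For $\beta$ of height at least two, choose a simple root $\beta'$ in the support of $\beta$ with $\beta'\neq\alpha$ -- possible because the support of a non-simple positive root cannot be reduced to $\{\alpha\}$ -- and write $X_\beta$ as a non-zero scalar multiple of $[X_{\beta'},X_{\beta''}]$, where $\beta''$ is a root of strictly smaller height lying in $\tilde{\mathfrak g}_{v_\Delta}$. Expanding
$$X_\beta\cdot\varphi(X_\alpha) \;=\; X_{\beta'}\bigl(X_{\beta''}\cdot\varphi(X_\alpha)\bigr) \;-\; X_{\beta''}\bigl(X_{\beta'}\cdot\varphi(X_\alpha)\bigr)$$
reduces everything to base cases through the inductive hypothesis applied to $\beta'$ and $\beta''$, both distinct from $\alpha$. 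The symmetric argument inside the Levi subalgebra generated by $S^p$ handles the negative roots in $\tilde{\mathfrak g}_{v_\Delta}$.

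For the base case, take $\beta$ simple with $\beta\neq\alpha$. The cocycle identity reads
$$X_\beta\cdot\varphi(X_\alpha) \;=\; X_\alpha\cdot\varphi(X_\beta) \;+\; c_{\alpha,\beta}\,\varphi(X_{\alpha+\beta}),$$
where $c_{\alpha,\beta}=0$ unless $\alpha+\beta$ is a root of $\tilde{\mathfrak g}_{v_\Delta}$. Both summands on the right-hand side are $T_\ad$-weight vectors of weight $w+\alpha+\beta$ in $V/\tilde{\mathfrak g}.v_\Delta$. Combining the description of the admissible $T_\ad$-weights furnished by Proposition~\ref{leftinclusion} with the refined support analysis of Lemma~\ref{rootsupport} and Proposition~\ref{supportofGamma}, I would argue that each of these two summands either vanishes directly, or can be absorbed into a coboundary adjustment of $\varphi$ within its cohomology class, so that the whole right-hand side vanishes modulo $\tilde{\mathfrak g}.v_\Delta$.

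The main obstacle I foresee is the case analysis arising when $\alpha+\beta$ itself belongs to $\Sigma$, or when $\beta$ is a negative simple root of $-S^p$ adjacent to $\alpha$: in these configurations, the cocycle identity genuinely couples $\varphi(X_\alpha)$ with $\varphi(X_{\alpha+\beta})$ or $\varphi(X_{-\alpha})$, and the coboundary freedom must be used in a coordinated way across several evaluations of $\varphi$. Careful use of Axioms (A1)--(A3) and (S), together with Lemma~\ref{additionalppty} and Lemmas~\ref{lemma3}--\ref{lemma6}, will be needed to rule out the troublesome weight configurations and finish the argument.
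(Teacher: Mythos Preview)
Your strategy has a genuine circularity that the cocycle identity alone cannot break. In the base case where both $\alpha$ and $\beta$ are simple, the identity only yields
\[
X_\beta\cdot\varphi(X_\alpha)\;-\;X_\alpha\cdot\varphi(X_\beta)\;=\;c_{\alpha,\beta}\,\varphi(X_{\alpha+\beta}),
\]
which is symmetric: it tells you nothing about either term individually. You propose to kill the right-hand side using Proposition~\ref{leftinclusion} and Lemmas~\ref{lemma3}--\ref{lemma6}, but those results describe the weights occurring in the \emph{invariants} $(V/\tilde{\mathfrak g}.v_\Delta)^{\tilde G_{v_\Delta}}$; the vectors $\varphi(X_\beta)$ and $\varphi(X_{\alpha+\beta})$ are not invariants, so there is no a priori constraint on their weight $w+\alpha+\beta$. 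Likewise Proposition~\ref{supportofGamma} has as its \emph{hypothesis} precisely the vanishing $[X_\beta v_\gamma]=0$ for $\beta\neq\alpha$ that you are trying to establish, so invoking it here is circular. Your height induction also stalls when $\beta=\alpha+\beta'$ with $\beta'$ simple: writing $X_\beta\sim[X_{\beta'},X_\alpha]$ and expanding only returns the tautology $X_\beta\varphi(X_\alpha)=X_\beta\varphi(X_\alpha)$.

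The paper's argument supplies the missing idea: instead of working purely inside $V/\tilde{\mathfrak g}.v_\Delta$, it passes through the long exact sequence attached to $0\to\tilde{\mathfrak g}.v_\Delta\to V\to V/\tilde{\mathfrak g}.v_\Delta\to 0$, restricted to the rank-two Levi $\mathfrak g(\alpha,\beta)$. The point (Lemma~\ref{trivialcase}) is that Kostant's explicit description of $H^1(\mathfrak g_{v_\lambda},V(\lambda))$ controls the image in $H^2$, and this lets one choose a representative $v_\gamma\in V$ of $\varphi(X_\alpha)$ with the desired vanishing properties \emph{in $V$ itself}, not merely modulo $\tilde{\mathfrak g}.v_\Delta$. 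Once this lifting tool is in place, the proof is a root-system case analysis --- $(\gamma,\beta)<0$; $\gamma-\beta$ a root with $\alpha\perp\supp(\beta)$ or not; $\gamma-\beta$ not a root with $(\gamma,\beta)\ge0$ --- exploiting Lemma~\ref{properties} on the weights $\lambda_D$. The cocycle relation does appear, but only as one ingredient among several, never as the engine of the argument.
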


\begin{remark}
The vanishing condition fulfilled by the vector $\varphi(X_\alpha)$ is that stated in Proposition~\ref{supportofGamma}.
\end{remark}

\paragraph{\textbf{Proof of Theorem~\ref{LiealaKostant}}}
Let $\varphi\in\left(H^1(\tilde{\mathfrak g}_{v_{\Delta}},V/\tilde{\mathfrak g}.v_{\Delta})\right)^{\tilde G_{v_{\Delta}}/\tilde G_{v_{\Delta}}^\circ}$ be a $T_\ad$-weight vector.
Let $\gamma$ be the $T_\ad$-weight of $\varphi(X_\alpha)$ and $v_\gamma$ denote a representative of $\varphi(X_\alpha)$ in $\oplus _\lambda V(\lambda)_{\lambda-\gamma}$.

Remark that when $\alpha$ does not belong to the support of $\gamma$ then by Proposition~\ref{vanishingconditions}, $\gamma$ lies in $\Sigma(\Delta)$.
We shall thus assume in the remainder of the proof that $\alpha$ does belong to the support of the $T_\ad$-weight $\gamma$
and that $v_\gamma$ is not equal to $X_{-\alpha}^r v_{\lambda_i}$ - in which case the proposition is obvious.
We shall proceed along the type of the support of $\gamma$.
Let us work out a few cases in detail.
The main ingredients of the proof are Proposition~\ref{supportofGamma} and ~\ref{vanishingconditions} along with the properties enjoyed by the dominant weights in $\Delta$ (\textsl{see} section~\ref{propertiesofthemonoid}).
As a consequence of Proposition~\ref{vanishingconditions}, the weight $\gamma$ can be written as a sum of two positive roots, say $\beta_1$ and $\beta_2$.

Consider first the case where the supports of the roots $\beta_1$ and $\beta_2$ are orthogonal.
Thanks to Proposition~\ref{supportofGamma}, the roots $\beta_1$ and $\beta_2$ have to be simple.
In virtue of Lemma~\ref{properties}, there is a single dominant weight, say $\lambda$,
which is neither orthogonal to $\beta_1$ nor to $\beta_2$.
Thanks to Proposition~\ref{vanishingconditions}, $\gamma\in\Sigma(\Delta)$.

Suppose now that the support of $\gamma$ is of type $\mathsf A_n$.
If $\gamma$ is not a root,
Proposition~\ref{supportofGamma} and ~\ref{vanishingconditions} yield:  $\gamma=\alpha_{i-1}+2\alpha_i+\alpha_{i+1}$ with $\alpha=\alpha_i$
and all the dominant weights $\lambda_k$ are orthogonal to both $\alpha_{i-1}$ and $\alpha_{i+1}$.
Clearly, we thus have: $[v_\gamma]\in\left(V/\mathfrak g.{v_{\Delta}}\right)^{G_{v_{\Delta}}}$.
If $\gamma$ is now a root then one gets: $\gamma=\alpha_i+\ldots+\alpha_j$ and $\alpha=\alpha_i$ (or $\alpha_j$) by Proposition~\ref{supportofGamma}.
Since $(\gamma,\alpha_j)>0$, applying the above remark to $\alpha_j$, we get that either $\gamma$ or $\gamma-\alpha$ belongs to $\Sigma(\Delta)$.

In case of type $\mathsf B_n$, we obtain similarly as before that  $\gamma=\alpha_i+\ldots+\alpha_n$ or $\gamma=2(\alpha_i+\ldots+\alpha_n)$ whenever $\alpha$ lies in the support of $\gamma$. Note that the same arguments as for the case of a root $\gamma$ of type $\mathsf A$ can be applied.
Suppose thus $\gamma$ is the weight $2(\alpha_i+\ldots+\alpha_n)$.
Then by the above remark along with Proposition~\ref{supportofGamma} and \ref{vanishingconditions} we get: $\alpha=\alpha_i$.
Moreover all simple roots, except $\alpha_i$ and $\alpha_{i+1}$, in the support of $\gamma$ are orthogonal to the weights in $\Delta$.
Remark that $\alpha_{i+1}$ can not be orthogonal to $S^p$ by Whitehead lemma.
From Lemma~\ref{properties}, we deduce that the fundamental weight attached to $\alpha_i$ (resp. $\alpha_{i+1}$) is the unique weight in $\Delta$ non-orthogonal to $\alpha_i$ (resp. $\alpha_{i+1}$).
It follows that $\gamma-2\alpha\in\Sigma(\Delta)$.

The other types can be worked out similarly.

\subsubsection{\textbf{Proof of Proposition~\ref{vanishingconditions}}}

For each $\alpha\in S$, let $s_\alpha$ denote the associated simple reflection in the Weyl group of $(G,T)$.

\begin{theorem}[\cite{Ko}]~\label{Kostant}
$$
H^1(\mathfrak g_{v_{\lambda}},V(\lambda))=\oplus_{\alpha} k X_\alpha^*\otimes v_{s_\alpha\lambda}\quad\mbox{ as $T$-modules}
$$
where $v_{s_\alpha\lambda}$ is a weight vector in $V(\lambda)$ of weight $s_\alpha\lambda$ and $\alpha$ is a simple root non-orthogonal to $\lambda$.
\end{theorem}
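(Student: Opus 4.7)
My plan is to prove Kostant's statement by combining the Chevalley--Eilenberg complex with the Hochschild--Serre spectral sequence, reducing the problem to Kostant's classical cohomology theorem for the nilradical of a parabolic.

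First, I would identify $\mathfrak g_{v_\lambda}$ concretely. Let $P_\lambda\subset G$ be the parabolic subgroup stabilising the line $[v_\lambda]$; then $G_{v_\lambda}=\ker(\lambda\colon P_\lambda\to \mathbb G_m)$ with $\lambda$ extended trivially across the unipotent radical, so that $\mathfrak g_{v_\lambda}=\ker(d\lambda\colon \mathfrak p_\lambda\to k)$. In particular $\mathfrak g_{v_\lambda}$ contains the full nilradical $\mathfrak p_\lambda^u$ and has a Levi-type decomposition $\mathfrak g_{v_\lambda}=\mathfrak p_\lambda^u\rtimes\mathfrak l_0$, where $\mathfrak l_0=\ker(d\lambda|_{\mathfrak l_\lambda})$ is a codimension-one subalgebra of the Levi $\mathfrak l_\lambda$ of $\mathfrak p_\lambda$.

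Second, I would apply the Hochschild--Serre spectral sequence for the ideal $\mathfrak p_\lambda^u\triangleleft\mathfrak g_{v_\lambda}$:
$$
E_2^{p,q}=H^p(\mathfrak l_0,H^q(\mathfrak p_\lambda^u,V(\lambda)))\Longrightarrow H^{p+q}(\mathfrak g_{v_\lambda},V(\lambda)).
$$
The $q$-rows in low degree are supplied by Kostant's original theorem on nilradical cohomology: as $L_\lambda$-modules, $H^0(\mathfrak p_\lambda^u,V(\lambda))\cong V_{L_\lambda}(\lambda)$ and $H^1(\mathfrak p_\lambda^u,V(\lambda))\cong\bigoplus_\alpha V_{L_\lambda}(s_\alpha\cdot\lambda)$, the sum being indexed by simple roots $\alpha$ non-orthogonal to $\lambda$.

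The main obstacle is then to analyse $H^p(\mathfrak l_0,-)$ on these simple $L_\lambda$-modules. Since $\mathfrak l_0$ agrees with $\mathfrak l_\lambda$ on the derived subalgebra and differs only along the single central direction defined by $\lambda$, this is a weight-by-weight calculation: I would show $E_2^{1,0}=H^1(\mathfrak l_0,V_{L_\lambda}(\lambda))$ vanishes, so that $H^1(\mathfrak g_{v_\lambda},V(\lambda))$ sits inside $E_2^{0,1}$, and then identify within each summand $V_{L_\lambda}(s_\alpha\cdot\lambda)$ the unique one-dimensional $T$-weight line of weight $s_\alpha\cdot\lambda$ that descends to a non-trivial class, namely the line spanned by the highest weight vector paired with $X_\alpha^*$.

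Finally, I would verify the explicit representative. The 1-cochain $\varphi_\alpha\colon\mathfrak g_{v_\lambda}\to V(\lambda)$ determined by $T$-equivariance and the requirement $\varphi_\alpha(X_\alpha)=v_{s_\alpha\lambda}$ satisfies the cocycle condition $\varphi_\alpha([X,Y])=X\varphi_\alpha(Y)-Y\varphi_\alpha(X)$, which one checks from the bracket relations in $\mathfrak g_{v_\lambda}$ together with the explicit description $v_{s_\alpha\lambda}\propto X_{-\alpha}^{(\lambda,\alpha^\vee)}v_\lambda$; non-triviality follows because the class has weight $s_\alpha\cdot\lambda$, whereas any potential coboundary $X\mapsto Xv$ of this weight would require a vector $v\in V(\lambda)$ of weight $s_\alpha\cdot\lambda$, which does not lie in the set of $V(\lambda)$-weights.
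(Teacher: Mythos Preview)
The paper does not supply a proof of this statement; it is quoted as a theorem of Kostant with citation~[Ko], so there is no argument in the paper to compare against.

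Evaluating your sketch on its own: the Hochschild--Serre route through the ideal $\mathfrak p_\lambda^u\triangleleft\mathfrak g_{v_\lambda}$ is natural, but two related steps are genuine gaps. First, your claim that $E_2^{1,0}=H^1(\mathfrak l_0,V_{L_\lambda}(\lambda))$ vanishes is not justified and in fact fails in general. Since $(\lambda,\beta^\vee)=0$ for every $\beta\in S^p_\lambda$, the module $V_{L_\lambda}(\lambda)$ is one-dimensional, and $\mathfrak l_0=\ker(d\lambda|_{\mathfrak l_\lambda})$ acts trivially on it; hence $E_2^{1,0}\cong(\mathfrak l_0/[\mathfrak l_0,\mathfrak l_0])^*$, which has dimension $|S\setminus S^p_\lambda|-1$ and is nonzero whenever $\lambda$ is non-orthogonal to at least two simple roots. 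As $E_2^{1,0}$ receives and emits no differentials, these classes, all of $T$-weight $\lambda$, survive to $H^1$. Second, and for the same underlying reason, your verification that $\varphi_\alpha$ is a cocycle does not go through: testing the cocycle identity on the pair $(h,X_\alpha)$ with $h\in\mathfrak l_0$ gives $\alpha(h)\,v_{s_\alpha\lambda}=(s_\alpha\lambda)(h)\,v_{s_\alpha\lambda}$, i.e.\ $(1+(\lambda,\alpha^\vee))\,\alpha(h)=0$ for all $h\in\ker(d\lambda)$, which forces $\alpha$ to vanish on $\mathfrak l_0$ --- a condition that typically fails (take $G=SL_3$, $\lambda=\rho$). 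So the cochain $X_\alpha^*\otimes v_{s_\alpha\lambda}$ is in general not closed; a correct representative must carry additional components along the toral part of $\mathfrak g_{v_\lambda}$. To make the spectral-sequence argument work you would have to handle $E_2^{1,0}$ explicitly and identify the $\mathfrak l_0$-invariants in each $V_{L_\lambda}(s_\alpha\cdot\lambda)$ with care, rather than assert that the answer falls out term by term.
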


Let $\varphi$ be a non-zero $T_\ad$-weight vector in $H^1(\mathfrak g_{v_{\Delta}},V/\mathfrak g._{v_{\Delta}})^{G_{v_{\Delta}}/G_{v_{\Delta}}^\circ}$.
Then there exist $\alpha\in S$ and a $T_\ad$-weight vector $[v_\gamma]$ in $V/\mathfrak g.{v_{\Delta}}$
such that $\varphi(X_\alpha)=[v_\gamma]\neq 0$ and
one can write $\varphi$ as
\begin{equation}\label{harmonic}
\varphi=\sum_{\beta+\nu=\alpha+\gamma} X_\beta^*\otimes [v_\nu].
\end{equation}
Further, note that the $T_\ad$-weight of $\varphi$ is in $\mathbb Z\Delta$.

Consider the short exact sequence of $\mathfrak g_{v_{\Delta}}$-modules
\begin{equation}\label{ses}
0\longrightarrow \mathfrak g.v_{\Delta}\longrightarrow V\longrightarrow V/\mathfrak g.v_{\Delta}\longrightarrow 0
\end{equation}
and the associated long exact sequence in cohomology.

In order to prove Proposition~\ref{vanishingconditions} we shall
study separately the following situations regarding the positive roots $\beta$ whose support is contained in that of $\gamma$:
$(\gamma,\beta)<0$; when $\gamma-\beta$ is a root, we work out first the case when $\alpha$ and the support $\supp(\beta)$ of $\beta$ are orthogonal
and thereafter the case when they are not; finally, we consider the roots $\beta$ such that $\gamma-\beta$ is not a root
and $(\gamma,\beta)\geq 0$.
In each situation, we shall end up by means of general arguments with a list of few cases which can be easily worked out.

Before dealing with different situations, note that the following lemma holds in general.

\begin{lemma}\label{trivialcase}
Let $\beta\in$ be a positive root such that for any weight $\phi(\alpha,\beta)$ distinct to $\alpha$ in the integral span $\Phi(\alpha,\beta)$
of $\alpha$ and $\beta$,
$\gamma-\phi(\alpha,\beta)$ is neither a root nor trivial.
Then there exists a representative $v_\gamma\in V$ of $\varphi(X_\alpha)$ such that $X_\delta v_\gamma=0$ for every positive root $\delta\in\Phi(\alpha,\beta)$ distinct to $\alpha$, and for $\delta=-\alpha$.
\end{lemma}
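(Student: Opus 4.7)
The plan is to build the desired representative by starting with an arbitrary one and adjusting it within its class modulo $\tilde{\mathfrak g}\cdot v_\Delta$. Every representative of $\varphi(X_\alpha)$ lying in $\bigoplus_D V(\lambda_D)_{\lambda_D-\gamma}$ differs from a chosen base point $v_\gamma^0$ by an element of $U(\mathfrak n^-)_{-\gamma}\cdot v_\Delta$, since $v_\Delta$ is annihilated by $\mathfrak n^+$ and the only contributions to the weight $-\gamma$ piece of $\tilde{\mathfrak g}\cdot v_\Delta$ come from $\mathfrak n^-$. So the task is to find $w\in U(\mathfrak n^-)_{-\gamma}\cdot v_\Delta$ with $X_\delta(v_\gamma^0-w)=0$ for every $\delta$ listed in the conclusion.

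The first step is to show that for each such $\delta$, the vector $X_\delta v_\gamma^0$ actually lies in $\tilde{\mathfrak g}\cdot v_\Delta$. For $\delta$ a positive root in $\Phi(\alpha,\beta)\setminus\{\alpha\}$, I would invoke the cocycle relation
\[
\varphi([X_\delta,X_\alpha])\;=\;X_\delta\,\varphi(X_\alpha)-X_\alpha\,\varphi(X_\delta)\quad\text{in }V/\tilde{\mathfrak g}\cdot v_\Delta.
\]
Since $\delta>0$ the element $X_\delta$ lies in $\tilde{\mathfrak g}_{v_\Delta}$, so $\varphi(X_\delta)$ is defined, and $[X_\delta,X_\alpha]$ is either zero or a multiple of $X_{\delta+\alpha}\in\tilde{\mathfrak g}_{v_\Delta}$. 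Thus the right-hand side is identified with known cohomological data, and the hypothesis that $\gamma-\delta$ is neither a root nor zero forces the weight space $V_{\lambda_D-(\gamma-\delta)}$ hit by $X_\delta v_\gamma^0$ to be reachable only from $\mathfrak n^-$ acting on $v_\Delta$ via products of length at least two, i.e.\ to sit inside $\tilde{\mathfrak g}\cdot v_\Delta$. For $\delta=-\alpha$ the same conclusion is drawn using $\phi=-\alpha\in\Phi(\alpha,\beta)$ in the hypothesis, which gives $\gamma+\alpha$ neither a root nor zero.

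The second step is the simultaneous adjustment. For each $\delta$ fix $w_\delta\in U(\mathfrak n^-)_{-\gamma}\cdot v_\Delta$ such that $X_\delta w_\delta = X_\delta v_\gamma^0$; existence follows from step one. To combine these into a single $w$, I would use the structure of the rank-two root subsystem $\Phi(\alpha,\beta)$ and the fact that the hypothesis controls \emph{every} positive element of that subsystem. More precisely, I would organize the positive roots of $\Phi(\alpha,\beta)\setminus\{\alpha\}$ by height and perform a downward induction: having arranged $X_{\delta'}v_\gamma=0$ for all $\delta'$ strictly higher than $\delta$, the further adjustment by an element of $U(\mathfrak n^-)_{-\gamma}\cdot v_\Delta$ needed to kill $X_\delta v_\gamma$ can be chosen in weight pieces that, by the commutator relations in the rank-two subsystem, do not disturb the already established vanishing.

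The main obstacle is precisely this compatibility step: ensuring that killing $X_\delta v_\gamma$ for one $\delta$ does not reintroduce a nonzero $X_{\delta'}v_\gamma$ for another. I expect the non-root/non-triviality hypothesis on $\gamma-\phi$ for every $\phi\in\Phi(\alpha,\beta)\setminus\{\alpha\}$ to be exactly what is needed to run the downward induction through each of the finitely many possible rank-two subsystems (types $\mathsf A_1\times\mathsf A_1$, $\mathsf A_2$, $\mathsf B_2$, $\mathsf G_2$), since it rules out the intermediate weight spaces where an obstruction to the induction would have to live.
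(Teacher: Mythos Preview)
Your plan has a structural gap that breaks the argument at the outset. You write that any two representatives of $\varphi(X_\alpha)$ differ by an element of $U(\mathfrak n^-)_{-\gamma}\cdot v_\Delta$. But $U(\mathfrak n^-)\cdot v_\Delta$ is all of $V$; what you are allowed to adjust by is only $(\tilde{\mathfrak g}\cdot v_\Delta)_\gamma=(\mathfrak n^-\cdot v_\Delta)_\gamma$, i.e.\ the span of the vectors $X_{-\gamma}v_{\lambda_D}$ when $\gamma$ is a root, and $\{0\}$ otherwise. This is at most a few dimensions, not the entire weight space, so your ``downward induction'' in Step~3 has essentially no room to operate. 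The same confusion surfaces in Step~2: from the cocycle identity you get information about the \emph{class} $[X_\delta v_\gamma^0]$ in $V/\tilde{\mathfrak g}\cdot v_\Delta$, not that $X_\delta v_\gamma^0$ itself lands in $\tilde{\mathfrak g}\cdot v_\Delta$; and for $\delta=-\alpha$ the element $X_{-\alpha}$ is not even in $\tilde{\mathfrak g}_{v_\Delta}$, so no cocycle relation is available and the hypothesis on $\gamma+\alpha$ alone does not force $X_{-\alpha}v_\gamma^0$ into $\tilde{\mathfrak g}\cdot v_\Delta$.

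The paper's argument avoids all of this by passing to cohomology with coefficients in $V$ rather than adjusting inside $V/\tilde{\mathfrak g}\cdot v_\Delta$. One restricts $\varphi$ to the rank-two subalgebra $\mathfrak g_{v_\Delta}(\alpha,\beta)=\mathfrak g_{v_\Delta}\cap\mathfrak g(\alpha,\beta)$ and uses the long exact sequence associated to $0\to\tilde{\mathfrak g}\cdot v_\Delta\to V\to V/\tilde{\mathfrak g}\cdot v_\Delta\to 0$: the hypothesis on $\gamma-\phi$ kills the relevant weight piece of the $H^2$ obstruction, so $\varphi|_{\mathfrak g_{v_\Delta}(\alpha,\beta)}$ lifts to a cocycle with values in $V$. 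Kostant's theorem then identifies the lift (up to coboundary in $V$) with a harmonic cocycle of the form $X_\alpha^*\otimes v_{s_\alpha\lambda}$; the hypothesis forces all other harmonic summands to vanish by weight. The cocycle condition on $X_\alpha^*\otimes v_{s_\alpha\lambda}$ gives $X_\delta v_{s_\alpha\lambda}=0$ for every positive $\delta\in\Phi(\alpha,\beta)\setminus\{\alpha\}$, and $X_{-\alpha}v_{s_\alpha\lambda}=0$ holds because $v_{s_\alpha\lambda}$ is the bottom of its $\alpha$-string. Since the harmonic cocycle is itself a lift of $\varphi|$, its value at $X_\alpha$ is a genuine representative of $\varphi(X_\alpha)$.
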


\begin{proof}
Let $\mathfrak g(\alpha,\beta)$ be the Levi subalgebra of $\mathfrak g$ associated to the roots $\alpha$ and $\beta$.
Considering the aforementioned long exact sequence of cohomology restricted onto the Lie subalgebra
$$
\mathfrak g_{v_{\Delta}}(\alpha,\beta):=\mathfrak g_{v_{\Delta}}\cap \mathfrak g(\alpha,\beta),
$$
we get that $\varphi$ maps trivially in $H^2(\mathfrak g_{v_{\Delta}}(\alpha,\beta),V)$.
We thus conclude by means of Theorem~\ref{Kostant}.
\end{proof}

\subsubsection{}
Throughout this section, the support of $\beta$ is contained in that of $\gamma$ and $(\gamma,\beta)<0$.

The following lemma is obvious.

\begin{lemma}\label{gammabetanotaroot}
The weight $\gamma-\beta$ is not a root except if $\gamma=3\alpha_1+2\alpha_2$ is a root of type $\mathsf G_2$.
\end{lemma}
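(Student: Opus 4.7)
I would argue by contradiction: assume $\gamma-\beta$ is a root. Recall from the proof of Theorem~\ref{LiealaKostant} that $\gamma$ is a $T_\ad$-weight coming from a non-zero cocycle class in $H^1(\tilde{\mathfrak g}_{v_\Delta},V/\tilde{\mathfrak g}.v_\Delta)$, so it can be written as a sum $\beta_1+\beta_2$ of two positive roots with support contained in $\supp(\gamma)$. Combined with the running hypotheses of this subsection---namely $\supp(\beta)\subset\supp(\gamma)$ and $(\gamma,\beta)<0$---together with $\gamma-\beta\in\Phi$, this narrows the possible pairs $(\gamma,\beta)$ to configurations living in the root subsystem whose simple roots are $\supp(\gamma)$, which is an irreducible subdiagram of $G$.

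The main mechanism is to restrict the cocycle $\varphi$ to the Levi subalgebra $\mathfrak g(\alpha,\beta)$ generated by $X_{\pm\alpha}, X_{\pm\beta}$ and to exploit both Lemma~\ref{trivialcase} and the explicit form of cohomology given by Theorem~\ref{Kostant}. When no $\gamma-\phi$ with $\phi\in\Phi(\alpha,\beta)\setminus\{\alpha\}$ is a root, Lemma~\ref{trivialcase} forces $X_\delta v_\gamma=0$ for every positive $\delta\in\Phi(\alpha,\beta)\setminus\{\alpha\}$ and for $-\alpha$; together with Lemma~\ref{properties} and the assumption that $\gamma\neq\alpha$, this is enough to reach a contradiction with $\varphi(X_\alpha)=[v_\gamma]\ne 0$. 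When instead $\gamma-\beta$ itself (or another $\gamma-\phi$) is a root, the hypothesis of Lemma~\ref{trivialcase} fails; I would then examine directly the long exact sequence associated with~(\ref{ses}) restricted to $\mathfrak g_{v_\Delta}(\alpha,\beta)$ and match the $T_\ad$-weights in $H^1$ provided by Theorem~\ref{Kostant} against the cocycle decomposition~(\ref{harmonic}), using the $\tilde G_{v_\Delta}$-invariance and Proposition~\ref{supportofGamma} to discard most candidates.

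The final step is a case analysis on the type of the root subsystem spanned by $\supp(\gamma)$. Thanks to Lemmas~\ref{properties} and~\ref{additionalppty} the list of viable supports is short. Running through types $\mathsf A_n$, $\mathsf B_n$, $\mathsf C_n$, $\mathsf D_n$, $\mathsf F_4$, $\mathsf G_2$ and cross-checking the required shape of $\gamma$ against the spherical-root classification in Definition~\ref{sphericalroots}, one sees that the simultaneous requirements ``$(\gamma,\beta)<0$'', ``$\gamma-\beta$ a root'', and ``$\gamma$ a sum of two positive roots compatible with a non-zero $\tilde G_{v_\Delta}$-invariant cocycle'' eliminate every configuration except the highest root $\gamma=3\alpha_1+2\alpha_2$ of type $\mathsf G_2$, where $\gamma$ admits several decompositions as a sum of two positive roots and long-root coroot pairings behave anomalously, so the uniform argument degenerates. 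I expect the main obstacle to be precisely this uniform handling of the exceptional types: $\mathsf F_4$ and $\mathsf G_2$ produce longer $\beta$-strings and extra Kostant weights that must be ruled out by hand, and the $\mathsf G_2$ exception is genuinely left uncovered by the general mechanism.
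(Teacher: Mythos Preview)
Your plan vastly overcomplicates what is in the paper a one-line root-system observation. The paper's entire proof reads: ``Note that if $\gamma-\beta$ was a root then $(\gamma,\beta)$ would be positive --- a contradiction except if $\gamma$ is like in the lemma.'' The point is purely combinatorial: under the standing hypothesis of this subsection one has $(\gamma,\beta)<0$, i.e.\ $(\gamma,\beta^\vee)\le -1$. If $\gamma-\beta$ were a root, then
\[
(\gamma-\beta,\beta^\vee)=(\gamma,\beta^\vee)-2\le -3,
\]
and for two roots $\delta,\beta$ the pairing $(\delta,\beta^\vee)$ can reach $-3$ only in type $\mathsf G_2$ (with $\beta$ short). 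Equivalently, the $\beta$-string through $\gamma-\beta$ would have length at least $4$, which again forces $\mathsf G_2$. That is the whole argument; no cocycle machinery, no Kostant theorem, no Levi restriction, no type-by-type case analysis is needed.

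Your proposal is not wrong in spirit --- the tools you list are exactly those used in the surrounding subsections to handle the genuinely delicate cases --- but for this particular lemma they are unnecessary, and invoking them obscures the fact that the statement is an immediate consequence of the inequality $(\gamma,\beta)<0$ alone. The elaborate plan (restricting $\varphi$ to $\mathfrak g(\alpha,\beta)$, matching Kostant weights, running through Dynkin types, checking against the spherical-root list) would at best reprove the lemma after much more work, and at worst introduce circularity, since Lemma~\ref{trivialcase} and the later case analysis themselves \emph{use} Lemma~\ref{gammabetanotaroot} as an input. Replace your plan with the two-line string argument above.
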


\begin{corollary}
If the roots $\alpha$ and $\beta$ span a root system of type $\mathsf A_1\times\mathsf A_1$ then $X_\beta\varphi(X_\alpha)=0$.
\end{corollary}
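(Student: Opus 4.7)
The statement I want to prove is the triviality of $X_\beta\varphi(X_\alpha)$ in $V/\tilde{\mathfrak g}.v_\Delta$, where $\alpha$ is a simple root, $\beta$ a positive root whose support lies in that of $\gamma$, $(\gamma,\beta)<0$, and $\alpha,\beta$ generate a root subsystem of type $\mathsf A_1\times\mathsf A_1$. The natural strategy is to apply Lemma~\ref{trivialcase} to the Levi subalgebra $\mathfrak g(\alpha,\beta)$: if its hypothesis holds, it yields a representative $v_\gamma\in V$ of $\varphi(X_\alpha)$ with $X_\beta v_\gamma=0$ \emph{in} $V$, and then $X_\beta\varphi(X_\alpha)=X_\beta[v_\gamma]=0$ in the quotient follows at once.

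Under the $\mathsf A_1\times\mathsf A_1$ assumption, $\alpha$ and $\beta$ are orthogonal and the root subsystem $\Phi(\alpha,\beta)$ reduces to $\{\pm\alpha,\pm\beta\}$; the weights in the integer span distinct from $\alpha$ that need to be examined in the hypothesis of Lemma~\ref{trivialcase} are $-\alpha$ and $\pm\beta$ (together with combinations like $\pm\alpha\pm\beta$ which are not roots and for which one only needs the nontriviality check). The key case is $\phi=\beta$: Lemma~\ref{gammabetanotaroot} gives that $\gamma-\beta$ is not a root, since the exceptional $\mathsf G_2$ weight $\gamma=3\alpha_1+2\alpha_2$ cannot arise when $\alpha,\beta$ span an $\mathsf A_1\times\mathsf A_1$ subsystem; and $\gamma\neq\beta$ because $(\gamma,\beta)<0$ while $(\beta,\beta)>0$.

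The remaining cases $\phi\in\{-\alpha,-\beta\}$ (and the non-root combinations in the integer span) are handled by a short combinatorial check: if $\gamma+\alpha$ or $\gamma+\beta$ were a root, then, writing $\gamma$ as a sum of two positive roots as provided by Proposition~\ref{vanishingconditions} and combining with the restrictions on the support of $\gamma$ from Proposition~\ref{supportofGamma} together with the orthogonality of $\alpha$ and $\beta$, one is forced into a configuration incompatible with $(\gamma,\beta)<0$ and $\supp(\beta)\subset\supp(\gamma)$. The non-vanishing of $\gamma+\alpha$, $\gamma+\beta$, and of the integer combinations like $\gamma\pm\alpha\pm\beta$ is immediate from the fact that $\gamma$ lies in the positive span of simple roots in $\supp(\gamma)$ (being a sum of two positive roots), so cannot equal $-\alpha$, $-\beta$, or similar negative expressions.

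With all hypotheses of Lemma~\ref{trivialcase} verified, one obtains a representative $v_\gamma\in V$ of $\varphi(X_\alpha)$ satisfying $X_\delta v_\gamma=0$ for every positive root $\delta\in\Phi(\alpha,\beta)\setminus\{\alpha\}$ and for $\delta=-\alpha$; in particular $X_\beta v_\gamma=0$ in $V$, which yields the corollary after passing to $V/\tilde{\mathfrak g}.v_\Delta$. The main obstacle in this plan is the case analysis required to rule out $\gamma+\alpha$ and $\gamma+\beta$ being roots: this is where the global root-system geometry of $\mathfrak g$ interacts with the local $\mathsf A_1\times\mathsf A_1$ structure, and one needs the combinatorial information on $\Sigma(\Delta)$ and the structure of $\Delta$ from Lemma~\ref{properties} to close the argument cleanly.
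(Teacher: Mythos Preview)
Your approach is essentially the paper's: the corollary is meant to follow directly from Lemma~\ref{trivialcase} together with Lemma~\ref{gammabetanotaroot}, and you correctly single out the case $\phi=\beta$ as the only substantive one, dispatched by Lemma~\ref{gammabetanotaroot} (the $\mathsf G_2$ exception being impossible when $\alpha\perp\beta$) and by $\gamma\neq\beta$ since $(\gamma,\beta)<0<(\beta,\beta)$.

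Where you diverge from the paper is in over-reading the hypothesis of Lemma~\ref{trivialcase}. The paper does not intend that one check \emph{every} element of the lattice $\mathbb Z\alpha+\mathbb Z\beta$; the proof of Lemma~\ref{trivialcase} goes through Kostant's theorem for the Levi piece $\mathfrak g_{v_\Delta}(\alpha,\beta)$, so the relevant $\phi$ are only those arising in the long exact sequence --- as spelled out just after Lemma~\ref{gamma-alpha-beta_root}, these are positive roots in the subsystem or sums of the shape $-\alpha+\delta+\delta'$ with $\delta,\delta'$ positive roots there. In the $\mathsf A_1\times\mathsf A_1$ case the only positive root $\neq\alpha$ is $\beta$, and the sums $-\alpha+\delta+\delta'$ give back $\alpha$ or $\beta$ (or $2\beta-\alpha$, for which $\gamma-(2\beta-\alpha)$ fails to be a root by the same inequality $(\gamma-a\alpha-b\beta,\beta^\vee)<0$ for $a\leq b$ used later in the paper). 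So no separate treatment of $\phi=-\alpha$ or $\phi=-\beta$ is needed.

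Your attempted ``short combinatorial check'' that $\gamma+\alpha$ and $\gamma+\beta$ are not roots is both unnecessary and not actually carried out: the appeal to Proposition~\ref{supportofGamma} and the $\Sigma(\Delta)$ structure does not by itself force the claimed contradiction, and in fact such sums can be roots in types $\mathsf B$/$\mathsf C$. Drop that paragraph; once you read the hypothesis of Lemma~\ref{trivialcase} as the paper intends, the corollary really is immediate from the two preceding lemmas.
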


\begin{proof}
The statement follows from the two preceding lemmas.
\end{proof}

\begin{lemma}\label{gamma-alpha-beta_root}
Suppose $\gamma-\alpha-\beta$ is a root. Then the following assertions hold.
\begin{enumerate}
	\item The weight $\gamma-\alpha$ is a root.
	\item $(\beta,\alpha^\vee)=-1$.
  \item
The weight $\gamma$ is one of the following:
\smallbreak\noindent
{\rm(i)}\enspace
$\gamma=\beta+3\alpha+2(\alpha^++\ldots+\alpha_{n-1})+\alpha_n$ in type $\mathsf{C_n}$;
\smallbreak\noindent
{\rm(ii)}\enspace
$\gamma=\ldots+\alpha^-+2\alpha+\beta$ with $\beta=\alpha_n$ in type $\mathsf{B_n}$;
\smallbreak\noindent
{\rm(iii)}\enspace
$\gamma=\ldots+\beta^-+2\beta+2\alpha$ with $\alpha=\alpha_n$ in type $\mathsf{C_n}$;
\smallbreak\noindent
{\rm(iv)}\enspace
$\gamma=\beta+2\alpha$ with $\alpha=\alpha_n$ in type $\mathsf{C_n}$.
\end{enumerate}

\end{lemma}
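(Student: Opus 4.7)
}
The plan is to combine the cohomological obstruction on the rank--$2$ Levi $\mathfrak g(\alpha,\beta)$ with a direct rank--$2$ root-string analysis, and then lift the resulting shapes to the ambient Dynkin diagram using the properties of the weights $\lambda_D$ listed in Lemma~\ref{properties}.

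For assertion (1), I would argue as follows. By Lemma~\ref{gammabetanotaroot} the weight $\gamma-\beta$ is not a root (the $\mathsf G_2$ exception producing $\gamma=3\alpha_1+2\alpha_2$ fits into case (i) of assertion (3) and so may be set aside). Since $(\gamma-\alpha-\beta)+\alpha=\gamma-\beta$ is not a root while $\gamma-\alpha-\beta$ is, the $\alpha$-string property forces $(\gamma-\alpha-\beta,\alpha^\vee)\geq 0$. Restricting the long exact cohomology sequence attached to (\ref{ses}) to the Levi $\mathfrak g(\alpha,\beta)$ and combining with the harmonic expression (\ref{harmonic}) of $\varphi$, the argument of Lemma~\ref{trivialcase} shows that the image of $\varphi$ in $H^1(\mathfrak g_{v_\Delta}(\alpha,\beta),V)$ can only vanish if there is a nontrivial descent of $\gamma$ by $\alpha$, namely if $\gamma-\alpha$ is a root.

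For assertion (2), having $\gamma-\alpha$ a root together with $\gamma-\alpha-\beta$ a root makes the $\beta$-string through $\gamma-\alpha$ nontrivial, so $(\gamma-\alpha,\beta^\vee)\geq 1$. Comparing with $(\gamma,\beta^\vee)<0$ and $(\gamma-\alpha-\beta,\alpha^\vee)\geq 0$, a rank--$2$ root-string analysis on the subsystem $\Phi(\alpha,\beta)$ shows that $(\beta,\alpha^\vee)$ cannot be $-2$ or $-3$: either value would produce a length--$2$ $\alpha$-string from $\gamma-\alpha-\beta$, forcing $\gamma-\beta$ to be a root in contradiction with Lemma~\ref{gammabetanotaroot}. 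Hence $(\beta,\alpha^\vee)=-1$.

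For assertion (3), the constraint $(\beta,\alpha^\vee)=-1$ together with the fact that $\gamma-\alpha$ and $\gamma-\alpha-\beta$ are roots while $\gamma-\beta$ is not places $(\alpha,\beta)$ inside a rank--$2$ subsystem of type $\mathsf A_2$, $\mathsf B_2$ or $\mathsf G_2$. Type $\mathsf A_2$ is incompatible with $(\gamma,\beta)<0$ and $\gamma$ a sum of two positive roots; type $\mathsf B_2$ yields exactly the local shapes appearing in (ii), (iii), (iv); type $\mathsf G_2$ yields the local shape in (i). To lift these rank--$2$ patterns to the ambient $G$, I would invoke Lemma~\ref{properties}, most crucially items (2) and (4): the coefficient $3$ on $\alpha$ in case (i) forces $\alpha$ to be short and attached through a doubled string to a long simple root, pinning the type down to $\mathsf C_n$; similarly the coefficient $2$ patterns in (ii)--(iv) combined with the requirement $\gamma\in\mathbb Z\Delta$ determine the ambient type to be $\mathsf B_n$ or $\mathsf C_n$ with $\alpha$ or $\beta$ equal to $\alpha_n$.

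The main obstacle I expect lies in step (3): the rank--$2$ reduction leaves a handful of configurations, but excluding longer tails of $\supp(\gamma)$ in types $\mathsf B_n$, $\mathsf C_n$, $\mathsf F_4$ (and the $\mathsf G_2$ edge case) demands systematic use of the constraints on the $\lambda_D$ from Lemma~\ref{properties}(2)--(4), and careful verification that no admissible $\gamma\in\mathbb Z\Delta$ escapes the list (i)--(iv). By contrast, assertions (1) and (2) reduce quickly once the Levi $\mathfrak g(\alpha,\beta)$ and the relevant root strings are brought to bear.
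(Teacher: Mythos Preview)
Your route diverges from the paper's in a way that introduces real gaps.

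For assertion (1), the paper gives a one-line root-theoretic argument: from the standing hypothesis $(\gamma,\beta)<0$ one has $(\gamma-\alpha-\beta,\beta)<0$ (this is the observation $(\gamma-a\alpha-b\beta,\beta^\vee)<0$ for $a\le b$ made in the text just before the lemma); since $\gamma-\alpha-\beta$ is a root, adding $\beta$ yields the root $\gamma-\alpha$. Your detour through the Levi $\mathfrak g(\alpha,\beta)$ and ``the argument of Lemma~\ref{trivialcase}'' does not actually produce this conclusion: Lemma~\ref{trivialcase} treats the situation where \emph{no} $\gamma-\phi(\alpha,\beta)$ is a root, the opposite of the present hypothesis, and the assertion that the image of $\varphi$ ``can only vanish if there is a nontrivial descent of $\gamma$ by $\alpha$'' is not something the cohomological long exact sequence delivers. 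This step is a gap, and an unnecessary one.

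For assertion (2), your claim that ``the $\beta$-string through $\gamma-\alpha$ is nontrivial, so $(\gamma-\alpha,\beta^\vee)\ge 1$'' is false: knowing that $\gamma-\alpha-\beta$ lies on the string only says the string extends downward, which constrains the $p$-parameter, not the pairing $q-p$. (In $\mathsf B_2$ with $\gamma-\alpha=\alpha+\beta$ one gets $(\gamma-\alpha,\beta^\vee)=0$.) The paper instead bounds $(\gamma-\alpha,\beta^\vee)$ from both sides: the inequality $(\gamma-\alpha-\beta,\beta^\vee)<0$ gives the upper bound $<2$, and combining with $(\gamma,\beta^\vee)<0$ forces $(\alpha,\beta^\vee)$ to be negative with $(\gamma-\alpha,\beta^\vee)\in\{0,1\}$, from which $(\beta,\alpha^\vee)=-1$ and the list in (3) follow directly.

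For assertion (3), invoking Lemma~\ref{properties} and the condition $\gamma\in\mathbb Z\Delta$ is misplaced: the lemma is a statement purely about roots, and the paper obtains the four shapes immediately from the constraint $(\gamma-\alpha,\beta^\vee)\in\{0,1\}$ together with $\gamma-\alpha$, $\gamma-\alpha-\beta$ being roots and $\gamma-\beta$ not. No information about the weights $\lambda_D$ enters. Also, your aside that the $\mathsf G_2$ exception of Lemma~\ref{gammabetanotaroot} ``fits into case (i)'' is off: case (i) is in type $\mathsf C_n$; the $\mathsf G_2$ weight $3\alpha_1+2\alpha_2$ is simply excluded from the present lemma by that earlier lemma.
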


\begin{proof}
The inequality $(\gamma-\alpha-\beta,\beta)<0$ yields the first assertion.

Since $\gamma-\alpha\in\Phi$, we have $(\gamma-\alpha,\beta^\vee)\geq 0$
and in turn $0\leq (\gamma-\alpha,\beta^\vee)<2$, \emph{i.e.} $(\gamma-\alpha,\beta^\vee)=0$ or $1$.
The lemma follows readily.
\end{proof}

Let us proceed now to the proof of Proposition~\ref{vanishingconditions} in the case under consideration.

Thanks to Lemma~\ref{trivialcase}, we can assume there exists $\phi(\alpha,\beta)$ in the integral span of $\alpha$ and $\beta$
such that $\gamma-\phi(\alpha,\beta)\in\Phi$.
Considering the long exact sequence of cohomology associated to~(\ref{ses}), $\phi(\alpha,\beta)$ is either a root or of shape $-\alpha+\delta+\delta'$ with $\delta$ and $\delta'$ being positive roots in the integral span of $\alpha$ and $\beta$.

Note that $(\gamma-a\alpha-b\beta,\beta^\vee)<0$ for any positive integers $a$ and $b$ with $a\leq b$.
Along with Lemma~\ref{gammabetanotaroot} (not $G_2$ type with $\gamma\in\Phi$), it follows that
the weight $\phi(\alpha,\beta)$ has to be $\alpha+\beta$ or $2\alpha+\beta$; the latter
weight occurs only in case $(\beta,\alpha^\vee)=-2$.

Suppose first that $\gamma-\alpha-\beta$ is a root.
A glance at the weight of $\varphi(X_\beta)$ (\emph{see} Lemma~\ref{gamma-alpha-beta_root}) shows that this vector is trivial: this weight should be equal to $\gamma+\alpha-\beta$ and
should fulfill the required property.
Further, since $2\alpha+\beta$ is not a root, $\varphi([X_\alpha,X_{\alpha+\beta}])=X_\alpha X_\beta\varphi(X_\alpha)-X_{\alpha+\beta}\varphi(X_\alpha)$ is trivial.
We shall prove that there is a representative of $\varphi(X_\alpha)$ in $V$ such that the corresponding representative of $\varphi([X_\alpha,X_{\alpha+\beta}])$ in $V$ is trivial in $V$; we thus obtain the proposition thanks to Lemma~\ref{trivialcase} and Lemma~\ref{gammabetanotaroot}.

Suppose $X_\beta\varphi(X_\alpha)$ is not trivial.
Then considering again the weights $\gamma$ listed in Lemma~\ref{gamma-alpha-beta_root},
we see that a representative $v_{\gamma-\beta}$ of $X_\beta\varphi(X_\alpha)$ can be taken to be in $V(\lambda)_{\lambda-\gamma+\beta}$
where $\lambda$ is not orthogonal to $\alpha$.
The support of $\gamma-\beta$ contains a simple root $\alpha'$ adjacent to $\alpha$ such that $(\gamma,\alpha')$ and $(\alpha,\alpha')$ differ.
From Lemma~\ref{properties}, we deduce that whatever $X_\beta\varphi(X_\alpha)$ is, the vector $X_\alpha v_{\gamma-\beta}$ does not lie in $\mathfrak g.v_{\Delta}\setminus\{0\}$.

Consequently, if $X_\beta X_\alpha v_\gamma$ is not trivial then the $\lambda$-component of $X_{\alpha+\beta}v_\gamma$ equals up to a scalar to $X_{-\gamma+\alpha+\beta}v_{\lambda}$ for $\lambda$ orthogonal to $\alpha$.
Since $(\gamma,\alpha+\beta)>0$, there exists a representative of $\varphi(X_\alpha)$ whose $\lambda$-component is trivial for every dominant weight $\lambda$ orthogonal to $\alpha+\beta$. 
Note that such a dominant weight $\lambda$ exists under the assumption that $\gamma$ is distinct to $\beta+2\alpha$.

Let now $\gamma-\alpha-\beta$ not be a root.
As mentioned above, the weight $\gamma-2\alpha-\beta$ has to be a root and so has $2\alpha+\beta$.
We proceed similarly as above while considering instead
$[v]=X_\alpha\varphi(X_{2\alpha+\beta})-X_{2\alpha+\beta}\varphi(X_\alpha)$ - which is obviously trivial because of the cocycle property.

Assume $\varphi(X_{2\alpha+\beta})$ is not trivial.
One may list the possible roots $\gamma-2\alpha-\beta$ with $2\alpha+\beta$ being also a root and $(\gamma,\beta)<0$.
A glance at the weight $\gamma'=\gamma-\alpha-\beta$ of $\varphi(X_{2\alpha+\beta})$
shows that the representative $v_{\gamma'}$ of $\varphi(X_\alpha)$
in $\oplus_\lambda V(\lambda)_{\lambda-\gamma'}$ projects trivially onto $V(\lambda)$ if $\lambda$ is orthogonal to $\alpha$.
For such a $v_{\gamma'}$, $X_\alpha v_{\gamma'}$  does not lie in $\mathfrak g.v_{\Delta}\setminus\{0\}$.
If $X_\alpha v_{\gamma'}-X_{2\alpha+\beta}v_\gamma$ is not trivial in $V$ for some representative $v_\gamma$ of $\varphi(X_\alpha)$
then any $\lambda$-component of  $X_{2\alpha+\beta}v_\gamma$ has to be non-trivial whenever $\lambda$ is orthogonal to $\alpha$.
Further, $X_{2\alpha+\beta}. v_\gamma^\lambda=X_{-\gamma+2\alpha+\beta}v_\lambda$.
Since $(\lambda-\gamma,2\alpha+\beta)<0$ for $\lambda$ orthogonal to both $\alpha$ and $\beta$ (existence), there exists a representative of $\varphi(X_\alpha)$ whose $\lambda$-components
are trivial for $\lambda$ orthogonal to $\alpha$ and $\beta$.
It follows that the corresponding representative of $[v]$ is trivial in $V$ whence the proposition.

\subsubsection{$\gamma-\beta$ is a root with $\alpha$ and $\supp(\beta)$ being orthogonal}\label{mainsubsection}

First observe that $X_\beta\varphi(X_\alpha)$ is trivial whenever so is $\varphi(X_\beta)$ (thanks to the cocycle property). We shall thus suppose in the two following subsections that $\varphi(X_\beta)$ is not trivial;
let $\gamma'$ be its $T_\ad$-weight. Recall that $\gamma'=\gamma+\alpha-\beta$.

\subsubsection{\textbf{$(\gamma,\alpha)\leq 0$} with assumptions of~\ref{mainsubsection}}

\begin{lemma}\label{2ndtrivialcase}
Assume there exists $\delta\in\Phi$ positive such that
$\varphi$ restricted onto the Lie subalgebra $\mathfrak g(\alpha,\delta)$ associated to $\alpha$ and $\delta$ maps trivially onto $H^2(\mathfrak g(\alpha,\delta),V)$.
Then $(\gamma,\alpha)=0$ and there exists
a representative $v_\gamma\in V$ of $\varphi(X_\alpha)$ such that the $\lambda$-component of $v_\gamma$ is trivial
for every $\lambda$ non-orthogonal to $\alpha$.
\end{lemma}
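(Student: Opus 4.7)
My plan is to exploit the hypothesis to produce a cohomological lift and then apply Kostant's theorem (Theorem~\ref{Kostant}) to pin down the allowed weights. Set $\mathfrak h = \mathfrak g(\alpha,\delta)\cap \mathfrak g_{v_\Delta}$ and restrict $\varphi$ to $\mathfrak h$. The assumption that $\varphi|_\mathfrak h$ dies in the relevant $H^2$ lets me invoke the long exact sequence attached to the short exact sequence~(\ref{ses}) to produce a cocycle $\tilde\varphi\in H^1(\mathfrak h,V)$ whose image in $H^1(\mathfrak h,V/\mathfrak g.v_\Delta)$ is $\varphi|_\mathfrak h$. In particular, $\tilde\varphi(X_\alpha)\in V$ is a distinguished representative of the class $[v_\gamma]$.

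Next I would decompose $V$ as a $\mathfrak g(\alpha,\delta)$-module: each factor $V(\lambda_D)^*$ splits into simple $\mathfrak g(\alpha,\delta)$-submodules $W_\mu$, with the one containing $v_{\lambda_D^*}$ having highest weight $\mu_D$ equal to the restriction of $\lambda_D^*$ to the maximal torus of $\mathfrak g(\alpha,\delta)$. Applying Theorem~\ref{Kostant} to each $W_\mu$, the $W_\mu$-component of $\tilde\varphi(X_\alpha)$ is, modulo coboundaries in $V$, a scalar multiple of a weight vector $v_{s_\alpha \mu}$, and this scalar automatically vanishes when $(\mu,\alpha^\vee)=0$. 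Each such $v_{s_\alpha\mu}$ has $T_\ad$-weight equal to the positive multiple $(\mu,\alpha^\vee)\alpha$.

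Matching this description with the global $T_\ad$-weight $\gamma$ of $\tilde\varphi(X_\alpha)$ and using $(\gamma,\alpha)\leq 0$, I force $\gamma=(\mu,\alpha^\vee)\alpha$ on each factor that contributes non-trivially; but a strictly positive multiple of $\alpha$ pairs strictly positively with $\alpha$, contradicting $(\gamma,\alpha)\leq 0$. Hence every $\mu$ that contributes must be orthogonal to $\alpha$, which simultaneously gives $\gamma\perp\alpha$ (the first assertion) and forces the coefficient on each $W_\mu$ with $(\mu,\alpha^\vee)\neq 0$ to be zero. For every $\lambda_D$ non-orthogonal to $\alpha$, the submodule of $V(\lambda_D)^*$ containing $v_{\lambda_D^*}$ has highest weight $\mu_D$ non-orthogonal to $\alpha$, so its contribution vanishes; thus the chosen representative $v_\gamma=\tilde\varphi(X_\alpha)$ has trivial $\lambda_D$-component, which is the second assertion.

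The main obstacle I anticipate is technical: Kostant's decomposition is stated for the isotropy $\mathfrak g(\alpha,\delta)_{v_\mu}$ of the highest weight vector of an individual simple factor, whereas $\mathfrak h$ is strictly smaller than $\bigcap_\mu \mathfrak g(\alpha,\delta)_{v_\mu}$. I will need to check that restricting along the inclusion $\mathfrak h\hookrightarrow \mathfrak g(\alpha,\delta)_{v_\mu}$ does not introduce new $T_\ad$-isotypic components in $H^1$ that would spoil the weight-matching. Because the whole analysis is graded by $T_\ad$ and the relevant weights lie in the explicit, small range spanned by $\alpha$ and $\delta$, this reduces to a case-by-case inspection along the lines already used in Proposition~\ref{vanishingconditions}.
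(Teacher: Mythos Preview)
Your overall approach --- lift through the long exact sequence attached to~(\ref{ses}) and then invoke Theorem~\ref{Kostant} --- is exactly the paper's one-line proof. The execution, however, contains a concrete error in the weight bookkeeping that breaks both conclusions.

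You assert that each $v_{s_\alpha\mu}$ has $T_\ad$-weight $(\mu,\alpha^\vee)\alpha$. This is correct only for the factor $W_{\mu_D}\subset V(\lambda_D)$ generated by the $G$-highest weight vector $v_{\lambda_D}$ itself. A general simple $\mathfrak g(\alpha,\delta)$-submodule $W_\mu\subset V(\lambda_D)$ has its highest weight vector $v_\mu$ at $T$-weight $\lambda_D-\nu$ for some sum of positive roots $\nu$, and then the normalized $T_\ad$-weight of $v_{s_\alpha\mu}$ is $\nu+(\mu,\alpha^\vee)\alpha$, not a pure multiple of $\alpha$. Consequently your matching step ``$\gamma=(\mu,\alpha^\vee)\alpha$, hence $(\gamma,\alpha)>0$, contradiction'' fails, and your deduction of the second assertion is incomplete: you have only killed the $W_{\mu_D}$-piece of $\tilde\varphi(X_\alpha)$, whereas the lemma requires the entire $V(\lambda_D)$-component to vanish.

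The repair is short once the correct formula is in hand. From $\gamma=\nu+(\mu,\alpha^\vee)\alpha$ together with $(\nu,\alpha^\vee)=(\lambda_D,\alpha^\vee)-(\mu,\alpha^\vee)$ (since $v_\mu$ has $T$-weight $\lambda_D-\nu$) one gets
\[
(\gamma,\alpha^\vee)=(\lambda_D,\alpha^\vee)+(\mu,\alpha^\vee),
\]
a sum of two nonnegative integers ($\lambda_D$ is dominant, $\mu$ is a $\mathfrak g(\alpha,\delta)$-highest weight). The ambient hypothesis $(\gamma,\alpha^\vee)\leq 0$ then forces all three quantities to vanish: this gives $(\gamma,\alpha)=0$, and shows that any $W_\mu$ carrying a contribution to $\tilde\varphi(X_\alpha)$ lies inside some $V(\lambda_D)$ with $(\lambda_D,\alpha)=0$, yielding both assertions at once. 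The ``main obstacle'' you flag about $\mathfrak h$ versus the individual isotropies $\mathfrak g(\alpha,\delta)_{v_\mu}$ is a genuine point, but it is secondary to the weight error above, and the paper's terse proof does not address it either.
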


\begin{proof}
The lemma follows readily from Kostant Theorem~\ref{Kostant} and the aforementioned long exact sequence.
\end{proof}

Let $v_{\gamma'}$ be a representative of $\varphi(X_\beta)$ in $\oplus V(\lambda)_{\lambda-\gamma'}$.
Suppose the assumptions of the lemma right above are satisfied and let $v_\gamma\in V$ be as in this lemma.
In particular, we have $X_{\pm\alpha}v_\gamma=0$ (in $V$).
Thanks to  the cocycle condition (applied to the roots $\alpha$ and $\beta$), we have $X_{\alpha}\varphi(X_{\beta})=X_\beta\varphi(X_\alpha)$
and in turn $X_{-\alpha}X_\alpha\varphi(X_\beta)=X_{-\alpha}X_\beta\varphi(X_\alpha)$.
Let $\lambda\in\Delta$ be such that $(\lambda,\gamma)\neq 0$ and $(\lambda,\alpha)=0$.
Note that such a weight $\lambda$ exists otherwise $v_\gamma$ will be $0$ thanks to the preceding lemma.
Since $(\gamma',\alpha)=(\gamma+\alpha-\beta,\alpha)=(\alpha-\beta,\alpha)>0$, the $\lambda$-component of the left hand side equals $\varphi(X_\beta)$ up to a scalar.
The right hand side equals $X_\beta X_{-\alpha}\varphi(X_\alpha)$ which is $0$.
It follows that $\varphi(X_\beta)$ has to be a trivial - which contradicts our assumption.
The proposition follows in the case under consideration.

Assume now that we are not in the setting of the lemma right above.
This implies in particular that $\alpha$ belongs to the support of $\gamma$.
Furthermore, at least one of the adjacent simple roots to $\beta$, say $\beta^-$, should belong also to the support of $\gamma$ (\textsl{see} Lemma~\ref{trivialcase}).

The following claim will be used in the following; it can be easily checked out by standard arguments.

\begin{claim}
If $\gamma-\beta$ and $\gamma-\beta^-$ are roots then so is $\gamma$.
\end{claim}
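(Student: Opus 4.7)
The plan is to argue by contradiction using standard root-string combinatorics. I would begin by invoking the classical fact that if $\mu, \nu$ are roots with $\mu + \nu \neq 0$ and $\mu + \nu \notin \Phi$, then the $\nu$-string through $\mu$ terminates at $\mu$ on the upward side, hence $(\mu, \nu^\vee) \geq 0$. Applied to the pairs $(\gamma - \beta, \beta)$ and $(\gamma - \beta^-, \beta^-)$, the hypothetical failure $\gamma \notin \Phi$ produces
\[
(\gamma, \beta^\vee) \geq 2 \quad\text{and}\quad (\gamma, \beta^{-\vee}) \geq 2.
\]

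Next I would combine these inequalities as
\[
(\gamma - \beta, \beta^{-\vee}) \;=\; (\gamma, \beta^{-\vee}) - (\beta, \beta^{-\vee}) \;\geq\; 2 - (\beta, \beta^{-\vee}) \;\geq\; 3,
\]
using the adjacency of the simple roots $\beta$ and $\beta^-$ to get $(\beta, \beta^{-\vee}) \leq -1$. But $(\gamma - \beta, \beta^{-\vee})$ is a Cartan integer between two roots, so it lies in $\{-3,\ldots,3\}$, with $\pm 3$ attained only in type $\mathsf{G}_2$ and then only between the long and the short roots.

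The main obstacle is therefore confined to a brief case analysis inside $\mathsf{G}_2$: the equality $(\gamma - \beta, \beta^{-\vee}) = 3$ would force $\beta^-$ to be the short simple root $\alpha_1$; its only simple-root neighbour is the long simple root $\alpha_2$, so $\beta = \alpha_2$, giving $(\beta, \beta^{-\vee}) = -3$; re-inserting this into the displayed inequality demands $(\gamma - \beta, \beta^{-\vee}) \geq 5$, overshooting the Cartan cap of $3$. Outside of $\mathsf{G}_2$ the strict inequality $(\gamma - \beta, \beta^{-\vee}) \geq 3$ is already impossible, so in every case we reach a contradiction and conclude $\gamma \in \Phi$.
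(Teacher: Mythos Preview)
Your proof is correct and is precisely the kind of ``standard argument'' the paper invokes (the paper itself gives no details beyond that phrase). The root-string bound $(\gamma-\beta,\beta^\vee)\geq 0$ together with the symmetric bound for $\beta^-$, followed by the Cartan-integer cap, is the natural way to settle this; your handling of the $\mathsf G_2$ boundary case is clean.

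One small remark on scope: in the surrounding section $\beta$ is a priori an arbitrary positive root of $\mathfrak g_{v_\Delta}$, not necessarily simple, while $\beta^-$ is a simple root adjacent to (the support of) $\beta$ and lying outside $\operatorname{supp}(\beta)$. Your argument used the simplicity of $\beta$ only to get $(\beta,\beta^{-\vee})\leq -1$, and this inequality holds in the more general situation as well: since $\beta^-$ is simple and $\beta^-\notin\operatorname{supp}(\beta)$, every summand in $(\beta,\beta^{-\vee})=\sum_{\delta\in\operatorname{supp}(\beta)} n_\delta(\delta,\beta^{-\vee})$ is $\leq 0$ with at least one strictly negative. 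In $\mathsf G_2$ any non-simple positive root has full support, so the hypothesis $\beta^-\notin\operatorname{supp}(\beta)$ forces $\beta$ simple there anyway, and your endgame applies verbatim. So your argument extends to the exact generality the paper needs with no further work.
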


Assume first that $\gamma$ is not a root then thanks to this claim, neither $\gamma-\beta^-$ nor $\gamma-\alpha$ is a root.
The latter is due to the fact that $(\gamma-\alpha,\alpha)<0$ (recall that $(\gamma,\alpha)\leq 0$ by assumption).
Further by Lemma~\ref{trivialcase}, $\gamma-\alpha-\beta^-$ has to be a root hence $(\gamma-\alpha-\beta^-,\alpha)\geq 0$ and in turn $(\beta^-,\alpha^\vee)=-2$.
We then observe that no weight $\gamma$ falls in the case under study hence $\gamma$ has to be a root.

Assume thus now that $\gamma\in\Phi$.
Recall that $(\gamma,\alpha)\leq 0$, $\gamma-\beta$ is a root and $\gamma$ does not satisfy the conditions of the above lemma.
One obtains a few roots $\gamma$ and can conclude as before.

\subsubsection{\textbf{$(\gamma,\alpha)>0$} with assumptions of~\ref{mainsubsection}}

If $(\gamma',\beta)\leq 0$ then as proved in the preceding paragraph, $X_\alpha\varphi(X_\beta)$ is trivial and so is $X_\beta\varphi(X_\alpha)$ (by cocyclicity).
Let us thus assume that $(\gamma',\beta)$ is strictly positive \emph{i.e.} $(\gamma-\beta,\beta)>0$ since $(\alpha,\beta)=0$.
The weight $\gamma-\beta$ being a root, we have either $(\gamma-\beta,\beta^\vee)=1$ or $2$ whenever not of type $\mathsf G_2$.
One can thus list the very few possible roots $\gamma-\beta$.
Let us work out explicitly the type $\mathsf C_n$; we have either $\gamma-\beta=\alpha+\ldots+ 2(\beta+\ldots)+\alpha_n$
or $\gamma-\beta=\beta+\ldots+ 2(\alpha+\ldots)+\alpha_n$.

Consider the first possible weight $\gamma$.
Note that  $(\gamma',\alpha)\geq 3$ hence $(\lambda-\gamma',\alpha^\vee)<0$ for every $\lambda\in \Delta$ (Lemma~\ref{properties}-(1)).
Hence if $X_{-\alpha}\varphi(X_\alpha)=0$ then we can prove as before that $\varphi(X_\beta)=0$.
Otherwise, there exists a weight in $\Delta$ that is orthogonal  neither to $\alpha$ nor to $\beta$.
Recall that the $T_\ad$-weight of $\varphi$, that is $\gamma+\tilde{\alpha}-\alpha$, lies in the $\mathbb Z$-span of $\Delta$.
This together with the properties of the weights in $\Delta$ (\textsl{see} Section~\ref{propertiesofthemonoid}) imply:
\begin{claim}
There exists a weight in $\Delta$ which is orthogonal to $\alpha+\beta$ but non-orthogonal to $\gamma$.
\end{claim}

From this claim, it follows that the cocycle identity fufilled by $\varphi$ and for $\alpha$ and $\beta$ can be lifted up to $V$: $X_\beta v_\gamma -X_\alpha v_{\gamma'}=0$. 
Therefore $\varphi$ maps trivially in $H^2(\mathfrak g(\alpha;\beta), V))$ and as before we obtain $X_\beta\varphi(X_\alpha)=0$.

\subsubsection{\textbf{$\gamma-\beta$ is a root with $\alpha$ and $\supp(\beta)$ non-orthogonal}}

\begin{lemma}
We have $(\gamma,\alpha)\geq 0$ unless $\gamma=\alpha_{n-2}+\alpha_{n-1}+\alpha_n$ of type $\mathsf C_n$.
\end{lemma}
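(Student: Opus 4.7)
The plan is to argue by contradiction: suppose $(\gamma,\alpha)<0$ and show that $\gamma$ must be the announced root in type $\mathsf{C}_n$.

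At the outset of the proof of Proposition~\ref{vanishingconditions} it was observed that $\gamma$ decomposes as a sum of two positive roots; by the running hypothesis of this subsection $\alpha\in\supp(\gamma)$, and $\gamma-\beta$ is a root for some positive $\beta$ with $\supp(\beta)\subset\supp(\gamma)$ such that $\alpha$ is non-orthogonal to $\supp(\beta)$. My first step would be to apply Proposition~\ref{supportofGamma}: for any simple root $\delta\in\supp(\gamma)$ that is orthogonal to $\alpha$ and for which $\gamma-\delta$ is not a root, one obtains $(\gamma,\delta)\geq 0$; combined with $\gamma$ being a sum of two positive roots, this tightly controls how $\supp(\gamma)$ can branch inside the Dynkin diagram.

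Next I would enumerate the possibilities for $\gamma$ according to the irreducible root system type spanned by $\supp(\gamma)\cup\{\alpha\}$. In each such type the list of admissible $\gamma$'s is small because $\gamma$ must be a sum of two positive roots, must belong to the integral span of $\Delta$, and must be compatible with the strong orthogonality conditions between the weights in $\Delta$ and the simple roots in $\supp(\gamma)$ coming from Lemma~\ref{properties} (most decisively items (2) and (4)). For each surviving candidate I would compute $(\gamma,\alpha)$ directly using Bourbaki's Cartan integers, together with the constraint that $\gamma-\beta$ be a root for a $\beta$ whose support meets $\alpha$ non-orthogonally.

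The main obstacle is the exhaustive sweep through the root system types $\mathsf{A}_n$, $\mathsf{B}_n$, $\mathsf{C}_n$, $\mathsf{D}_n$ and the exceptional types. I expect the exception to appear only in type $\mathsf{C}_n$, because the asymmetry $(\alpha_n,\alpha_{n-1}^\vee)=-2$ is precisely what allows $\gamma=\alpha_{n-2}+\alpha_{n-1}+\alpha_n$ with $\alpha=\alpha_{n-1}$ to satisfy $(\gamma,\alpha)=-1+2-2=-1<0$ while still being a root and with $\gamma-\alpha_{n-2}$, $\gamma-\alpha_n$ both roots non-orthogonal to $\alpha_{n-1}$. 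In every other type the candidate weights either fall outside $\mathbb Z\Delta$ or violate one of the structural constraints just listed, so the lemma's exception is the only genuine one.
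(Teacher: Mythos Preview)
Your outline has a genuine gap: the invocation of Proposition~\ref{supportofGamma} is circular. That proposition takes as hypothesis that $[X_\beta v_\gamma]=0$ for \emph{all} positive roots $\beta\neq\alpha$; but this is precisely the conclusion of Proposition~\ref{vanishingconditions}, inside whose proof the present lemma sits. At this stage of the argument you do not yet know the vanishing for the $\beta$'s in the current subsection, so you cannot feed $\varphi(X_\alpha)$ into Proposition~\ref{supportofGamma}. Relatedly, the claim that ``by the running hypothesis $\alpha\in\supp(\gamma)$'' is not justified: the subsection hypothesis only says $\gamma-\beta$ is a root and $\alpha$ is non-orthogonal to $\supp(\beta)$; the paper's own proof even derives, in one subcase, that $\alpha\notin\supp(\gamma)$.

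The paper avoids the circularity by using the cohomological Lemmas~\ref{trivialcase} and~\ref{2ndtrivialcase}, which only rely on Kostant's theorem and the long exact sequence attached to $0\to\mathfrak g.v_\Delta\to V\to V/\mathfrak g.v_\Delta\to 0$, not on any vanishing of $X_\beta\varphi(X_\alpha)$. The organizing move you are missing is to split on the sign of $(\gamma-\beta,\alpha)$. When $(\gamma-\beta,\alpha)>0$, the contradiction hypothesis $(\gamma,\alpha)<0$ forces $(\beta,\alpha^\vee)\in\{-2,-3\}$, which immediately restricts the type and lets one enumerate the few candidates (this is where the $\mathsf C_n$ exception survives, and Lemma~\ref{trivialcase} kills the remaining ones). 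When $(\gamma-\beta,\alpha)\leq 0$, one uses Lemma~\ref{2ndtrivialcase}: since $(\gamma,\alpha)\neq 0$, the restriction of $\varphi$ to $\mathfrak g(\alpha,\delta)$ cannot map trivially to $H^2$, so for any simple $\delta\in\supp(\gamma)$ orthogonal to $\alpha$ the weight $\gamma-\delta$ must be a root; this forces $\gamma=\beta+\delta$, and one finishes by listing the short list of possible $\gamma-\beta$ when no such $\delta$ exists. Your brute enumeration by Dynkin type, without these two lemmas and without the sign split on $(\gamma-\beta,\alpha)$, does not supply the constraints needed to cut the list down.
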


\begin{proof}
Let us proceed by contradiction.

Suppose first that $(\gamma-\beta,\alpha)$ is strictly positive.
Then $(\beta,\alpha^\vee)$ equals $-2$ or $-3$ and $\gamma-\beta-\alpha$ has to be a root.
In type $\mathsf B_n$, the simple root $\alpha_n$ has to be $\alpha$ itself and $(\gamma-\beta,\alpha)$ being strictly positive, it has to
be equal to $2$ and in turn $(\gamma,\alpha)=0$ -whence a contradiction.
Similarly, in type $\mathsf C_n$, we get as possibilities for $\gamma$ the weights $\gamma_1=\alpha_i+\ldots+2\alpha_{n-1}+\alpha_n$ with $i<n-1$ and $\beta=\alpha_n$ and $\gamma_2=\ldots+\alpha_{n-2}+\alpha_{n-1}+\alpha_n$.
Note that $\gamma_1-\phi(\alpha,\alpha_i)$ is not a root for any weight in the $\mathbb Z$-span of $\alpha$ and $\alpha_i$.
Together with Lemma~\ref{trivialcase}, this yields a contradiction with $(\gamma,\alpha)$ being strictly negative.
And similarly, we are left with $\gamma=\alpha_{n-2}+\alpha_{n-1}+\alpha_n$.
We handle the type $\mathsf F_4$ by analogous arguments.

Suppose now that $(\gamma-\beta,\alpha)$ is negative.
Assume further that the support of $\gamma$ contains a simple root $\delta$ which is orthogonal to $\alpha$.
Then by Lemma~\ref{2ndtrivialcase} again, the weight $\gamma-\delta$ has to be a root.

\begin{claim}
The weight $\gamma$ is a root and $\gamma=\beta+\delta$.
\end{claim}

Indeed, if $\alpha$ belongs to the support of $\gamma$ then
$\gamma$ is not a root and neither is $\gamma-\delta$.
Therefore the simple root $\alpha$ does not belong to the support of $\gamma$ and  the claim follows.

Finally assume that there is no root orthogonal to $\alpha$ in the support of $\gamma$.
It follows that $\alpha$ does belong to the support of $\gamma$ whenever $\gamma$ is distinct to $2\beta$.

\begin{claim}
The weight $\gamma-\beta$ is one of the roots $\alpha+\beta$, $2\beta+\alpha$, $2\alpha^-+\alpha$ or $\alpha^-+\alpha+\beta$.
\end{claim}

To obtain the above claim, we list the possible roots $\gamma-\beta$ such that there is no simple root $\delta$
orthogonal to $\alpha$  in the support of $\gamma$.

\end{proof}

We may assume without loss of generality that $\beta$ is a simple root not orthogonal to $\alpha$.
Therefore $\gamma-\alpha-\beta$ is a root since $(\gamma-\beta,\alpha)=(\gamma,\alpha)-(\alpha,\beta)>0$.

If $(\gamma',\beta)>0$ then $(\gamma-\beta,\beta^\vee)>-(\alpha,\beta^\vee)$ and in turn $(\gamma-\beta,\beta^\vee)\geq 2$
whence a contradiction with $\gamma-\alpha-\beta$ being a root.
It follows that $(\gamma',\beta)\leq 0$.
Then $(\gamma-\beta,\beta^\vee)\leq -(\alpha,\beta^\vee)$ and listing the possible weights, one may conclude as before taking into account the cases already worked out.

\subsubsection{\textbf{$\gamma-\beta$ is not a root and $(\gamma,\beta)\geq 0$}}

Remark that if $X_\beta\varphi(X_\alpha)$ is not trivial then (by Lemma~\ref{trivialcase}), the simple roots $\alpha$ and $\beta$ are not orthogonal
and in turn, one of the weights $\gamma-\alpha-\beta$ and $\gamma-2\alpha-\beta$ has to be a root.
Note that the latter may occur only in case $2\alpha+\beta$ is a root.

Suppose first that $\gamma-\alpha-\beta$ is a root.

\begin{claim}
$\varphi(X_\beta)$ is trivial.
\end{claim}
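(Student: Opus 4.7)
The plan is to assume $\varphi(X_\beta)\neq 0$ in $V/\tilde{\mathfrak g}.v_\Delta$ and derive a contradiction, following the pattern of the preceding subcases. From the expansion $\varphi=\sum_{\beta'+\nu=\alpha+\gamma}X_{\beta'}^{*}\otimes [v_\nu]$, the component $\varphi(X_\beta)$ carries $T_{\ad}$-weight $\gamma':=\gamma+\alpha-\beta$, and any representative $v_{\gamma'}$ lies in $\bigoplus_\lambda V(\lambda)_{\lambda-\gamma'}$. The starting observation is that $\gamma'-\alpha=\gamma-\beta\notin\Phi$ by the subcase hypothesis, so $v_{\gamma'}$ falls in the range of applicability of Proposition~\ref{supportofGamma} at the root $\alpha$.

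The first step is to combine the already-established subcases of Proposition~\ref{vanishingconditions} with Lemma~\ref{trivialcase} to conclude that $X_\delta v_{\gamma'}\in\tilde{\mathfrak g}.v_\Delta$ for every positive root $\delta\neq\beta$ of $\tilde{\mathfrak g}_{v_{\Delta}}$; this pins down $v_{\gamma'}$ to the distinguished shape prescribed by Lemma~\ref{choiceofrepresentative}. The second step exploits the cocycle identity
\[
\varphi([X_\alpha,X_\beta])=X_\alpha\varphi(X_\beta)-X_\beta\varphi(X_\alpha).
\]
Since $\alpha$ and $\beta$ must be adjacent (else Lemma~\ref{trivialcase} already yields $X_\beta\varphi(X_\alpha)=0$, and there is nothing to show), $[X_\alpha,X_\beta]$ is a non-zero multiple of $X_{\alpha+\beta}$, so $\varphi(X_{\alpha+\beta})$ has $T_{\ad}$-weight $\gamma-\beta\notin\Phi$. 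Invoking the already-treated ``$\gamma-\beta$ not a root'' case of Proposition~\ref{vanishingconditions} forces $\varphi(X_{\alpha+\beta})=0$, and the identity reduces to $X_\alpha\varphi(X_\beta)=X_\beta\varphi(X_\alpha)$.

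The main obstacle is the finite case analysis that remains. One enumerates all roots $\delta=\gamma-\alpha-\beta$ for which $\gamma-\beta\notin\Phi$ and $(\gamma,\beta)\geq 0$; the bound $|(\delta,\beta^\vee)|\leq 2$ together with $(\beta,\alpha^\vee)\in\{-1,-2,-3\}$ restricts $\gamma$ to short lists concentrated in types $\mathsf B_n$, $\mathsf C_n$, $\mathsf F_4$ and $\mathsf G_2$. For each such $\gamma$, the structural restrictions on $\Delta$ recorded in Lemma~\ref{properties} and Lemma~\ref{additionalppty}---specifically which $\lambda\in\Delta$ may be simultaneously non-orthogonal to $\alpha$ and to the remaining simple roots in the support of $\gamma'$---contradict the existence of the distinguished representative $v_{\gamma'}$ produced in the first step, yielding $\varphi(X_\beta)=0$ as claimed.
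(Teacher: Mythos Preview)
Your outline has two genuine gaps that prevent it from going through.

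First, your invocation of Proposition~\ref{supportofGamma} is illegitimate in this setting. That proposition requires the two distinguished simple roots (the ``exception'' root and the root $\delta$ in the support with $\gamma-\delta\notin\Phi$) to be \emph{orthogonal}. You want to apply it to $v_{\gamma'}$ with $\delta=\alpha$ (since $\gamma'-\alpha=\gamma-\beta\notin\Phi$) and with $\beta$ as the exception root (since you only establish $[X_\delta v_{\gamma'}]=0$ for $\delta\neq\beta$). But you have just argued that $\alpha$ and $\beta$ must be adjacent, so $\alpha\not\perp\beta$ and the proposition does not apply.

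Second, your step ``Invoking the already-treated `$\gamma-\beta$ not a root' case of Proposition~\ref{vanishingconditions} forces $\varphi(X_{\alpha+\beta})=0$'' is circular and also a non sequitur. The case ``$\gamma-\beta$ not a root'' is precisely the subcase you are \emph{currently} working in, so it is not already treated. Moreover, Proposition~\ref{vanishingconditions} asserts vanishing of $X_\beta\varphi(X_\alpha)$ for $\alpha$ simple; it says nothing about $\varphi$ evaluated on a non-simple root vector, and the mere fact that the $T_{\ad}$-weight $\gamma-\beta$ of $\varphi(X_{\alpha+\beta})$ is not a root does not force that vector to vanish in $V/\tilde{\mathfrak g}.v_\Delta$ (plenty of nonzero classes have weights that are not roots).

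The paper itself asserts the claim without an explicit proof; the intended argument is the same ``glance at the weight $\gamma'=\gamma+\alpha-\beta$'' technique used in the earlier subcase $(\gamma,\beta)<0$ (see the sentence following Lemma~\ref{gamma-alpha-beta_root}): one checks directly, for the short explicit list of $\gamma$'s arising from $\gamma-\alpha-\beta\in\Phi$, $\gamma-\beta\notin\Phi$, $(\gamma,\beta)\geq 0$, that $\gamma'$ cannot satisfy the conditions recorded in Proposition~\ref{vanishingconditions} and Proposition~\ref{supportofGamma} for the simple root $\beta$, so $[v_{\gamma'}]$ must vanish. Your third step gestures at this enumeration but does not carry it out, and it cannot be bootstrapped from the two flawed preliminary steps.
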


If $(\beta,\alpha^\vee)= -1$ then $(\gamma,\alpha)>0$
and $0=\varphi([X_\alpha,X_{\alpha+\beta}])=X_\beta X_\alpha\varphi(X_\alpha)$.
If there exists a representative $v_\gamma$ such that $X_\beta X_\alpha v_\gamma$ is trivial (in $V$) then the proposition is proved.
Let thus $X_\beta X_\alpha v_\gamma$ be non-trivial then it equals $X_{-\gamma+\alpha+\beta} v_{\Delta}$.
Note that whenever $\lambda$ is orthogonal to $\alpha$ , we have $(\lambda-\gamma+\alpha,\beta)<0$ in the case under study.
It follows that $X_\alpha v_\gamma=X_{-\beta}X_{-\gamma+\alpha+\beta}v_\lambda$ for $\lambda$ orthogonal to $\beta$ (recall that $X_\alpha v_\gamma$
may be assumed to be non-trivial otherwise the lemma is already proved).

\begin{claim}
There exists a dominant weight in $\Delta$ which is orthogonal to $\alpha+\beta$.
\end{claim}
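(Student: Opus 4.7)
The plan is to reduce this claim to Lemma~\ref{additionalppty}, which produces an element of $\Delta$ orthogonal to $\alpha + \alpha'$ as soon as one exhibits four pairwise distinct simple roots $\alpha, \alpha', \delta, \delta' \in S \setminus S^p$ with $(\alpha, \alpha')(\delta, \delta') \neq 0$ and $(\delta, \alpha) = 0$, provided $|\Delta| > 2$. In the situation of the claim the pair $(\alpha, \beta)$ already supplies the first two simple roots: they are adjacent since $(\beta, \alpha^\vee) = -1$, and both lie outside $S^p$ because the hypotheses of the current sub-case force the existence of dominant weights in $\Delta$ non-orthogonal to each of them, which by the construction of $\Delta$ recalled in Section~\ref{colors} places $\alpha, \beta \in S \setminus S^p$.

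Next I would produce $\delta$ and $\delta'$ from the combinatorics of $\gamma$. The present sub-case assumes that $\gamma - \alpha - \beta$ is a root while $\gamma - \beta$ is not, and the surrounding argument has already used that both $\alpha$ and $\beta$ belong to $\supp(\gamma)$. I would first observe that $\supp(\gamma)$ cannot be contained in $\{\alpha, \beta\}$: direct inspection of a rank-two subsystem of type $\mathsf A_2$ shows that no root $\gamma$ supported there can satisfy simultaneously $\gamma - \beta \notin \Phi$ and $\gamma - \alpha - \beta \in \Phi$. Connectedness of the support of the root $\gamma - \alpha - \beta$ then yields a simple root $\delta \in \supp(\gamma)$ adjacent to $\beta$ but orthogonal to $\alpha$ (choosing $\delta$ on the far side of $\beta$ when needed), together with a further simple root $\delta'$ adjacent to $\delta$ in that support. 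Non-membership of $\delta, \delta'$ in $S^p$ follows from $\gamma \in \mathbb Z \Delta$ combined with Lemma~\ref{properties}(4), since any simple root occurring in $\supp(\gamma)$ must pair non-trivially with some $\lambda \in \Delta$.

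The main obstacle is the degenerate case $|\Delta| \leq 2$, where Lemma~\ref{additionalppty} does not directly apply, together with those configurations in which $\supp(\gamma)$ fails to furnish a simple root orthogonal to $\alpha$; these are the low-rank situations in types $\mathsf B_n$, $\mathsf C_n$ and $\mathsf F_4$ where the earlier case analysis of the proof of Proposition~\ref{vanishingconditions} has already narrowed $\gamma$ down to a short explicit list. In each such residual configuration I would argue by direct contradiction with the enclosing sub-case: under the assumption that no $\lambda \in \Delta$ is orthogonal to $\alpha + \beta$, both $(\lambda, \alpha)$ and $(\lambda, \beta)$ would have to be strictly positive for every $\lambda \in \Delta$, and combined with the identity $X_\alpha v_\gamma = X_{-\beta} X_{-\gamma + \alpha + \beta} v_\lambda$ valid for $\lambda$ orthogonal to $\beta$, this contradicts either the non-triviality of $X_\beta X_\alpha v_\gamma$ that launched the discussion or the axioms of a spherically closed spherical system. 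The bookkeeping of this type-by-type verification, rather than any conceptual step, is where I expect the bulk of the work to lie.
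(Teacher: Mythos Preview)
The paper itself does not supply a proof of this claim; it is stated and immediately used, the only indication being that the companion claim a few paragraphs earlier is justified by ``the properties of the weights in $\Delta$ (\textsl{see} Lemma~\ref{propertiesofthemonoid})''. So your proposal has to be judged on its own merits rather than against an explicit argument in the text.

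Your plan to reduce to Lemma~\ref{additionalppty} is natural, but one step is wrong. You write that non-membership of $\delta,\delta'$ in $S^p$ ``follows from $\gamma\in\mathbb Z\Delta$ combined with Lemma~\ref{properties}(4), since any simple root occurring in $\supp(\gamma)$ must pair non-trivially with some $\lambda\in\Delta$''. This is false: simple roots of $S^p$ routinely occur in the support of spherical roots and of the weights $\gamma$ arising here (e.g.\ for $\gamma=\alpha_1+2\alpha_2+\alpha_3$ of type $\mathsf A_3$ one has $\alpha_2\in S^p$ by the compatibility axiom~(S)). Lemma~\ref{properties}(4) concerns two weights $\lambda,\lambda'$ non-orthogonal to the \emph{same} simple root and does not yield what you claim. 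Without $\delta,\delta'\notin S^p$ Lemma~\ref{additionalppty} does not apply, so the reduction breaks down before you ever reach the $|\Delta|\le 2$ residue.

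A second, smaller gap: your assertion that $\supp(\gamma)\not\subset\{\alpha,\beta\}$ is argued only ``in a rank-two subsystem of type $\mathsf A_2$'', but the hypothesis $(\beta,\alpha^\vee)=-1$ does not force type $\mathsf A_2$; one may have $(\alpha,\beta^\vee)=-2$ or $-3$. Even in type $\mathsf A_2$ the weight $\gamma=2\alpha+\beta$ satisfies $\gamma-\beta\notin\Phi$, $(\gamma,\beta)=0$, and $\gamma-\alpha-\beta=\alpha\in\Phi$, so $\supp(\gamma)=\{\alpha,\beta\}$ is not excluded by those conditions alone. To make the approach work you would need either a different source for $\delta,\delta'\notin S^p$ (for instance, exploiting the running assumption that $X_\beta X_\alpha v_\gamma=X_{-\gamma+\alpha+\beta}v_\Delta\neq 0$, which forces some $\lambda$ to be non-orthogonal to simple roots in $\supp(\gamma-\alpha-\beta)$), or a direct argument from Lemma~\ref{properties}(3) that bypasses Lemma~\ref{additionalppty} altogether.
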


Since $(\gamma,\alpha)>0$, we can deduce the existence of a representative $v_\gamma$
whose $\lambda$-components are trivial when $\lambda$ is orthogonal to $\alpha+\beta$.
The proposition follows by the same arguments as before.

Suppose now that $(\beta,\alpha^\vee)=-2$ (and $\gamma-2\alpha-\beta$ may be a root).
The possible weights can be explicitly listed.

\subsection{Application}\label{injectivity}

Retain the notation set up previously in this appendix and put

\begin{eqnarray*}
S^2 V/V(\Delta^2) & = & \oplus_{D\in\Delta} S^2 V(\lambda_D)/V(2\lambda_D) \\
                  &   & \oplus_{D\neq D'\in\Delta}V(\lambda_D)\otimes V(\lambda_{D'})/V(\lambda_D+\lambda_{D'}).
\end{eqnarray*}
A vector in any of the above direct summands is denoted by $v_D\cdot v_{D'}$.

Let $v_D$ denote the projection of $v\in V$ onto $V(\lambda_D)$ and consider the map of $T_\ad$-modules
\begin{eqnarray*}
f:&  V/\mathfrak g.v_{\Delta}&\rightarrow S^2 V/V(\Delta^2)\\
  & [v=\sum_{\Delta} v_D]&\mapsto [v\cdot v_{\lambda_D}]:=\sum_{D,D'}[v_D\cdot v_{\lambda_{D'}}].
\end{eqnarray*}
The referred $T_\ad$-module structure is induced by the normalized action on $V$.
We have obviously
\begin{lemma}
The map $f$ is injective.
\end{lemma}

\begin{proposition}\label{triviality}
The map induced by $f$
$$
H^1(f):H^1(\mathfrak g_{v_{\Delta}},V/\mathfrak g.v_{\Delta})\rightarrow H^1(\mathfrak g_{v_{\Delta}},S^2 V/V(\Delta^2))
$$
is injective.
\end{proposition}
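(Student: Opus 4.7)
The plan is to exploit the $T_\ad$-equivariance of $f$ together with the explicit description of the source obtained in Theorem~\ref{LiealaKostant}. Since $H^1(\tilde{\mathfrak g}_{v_\Delta}, V/\tilde{\mathfrak g}.v_\Delta)^{\tilde G_{v_\Delta}}$ is a multiplicity-free $T_\ad$-module with basis the weight vectors $\varphi = X_\alpha^* \otimes [v_{\alpha*\gamma}^{(\pm)}]$ indexed by $\alpha \in S \setminus S^p$ and $\gamma \in \Sigma(\Delta)$, and since $f$ intertwines the $T_\ad$-actions, injectivity follows once I show that $f(\varphi)$ is non-zero in $H^1(\mathfrak g_{v_\Delta}, S^2V/V(\Delta^2))$ for every such generator.

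For a fixed generator $\varphi$, the image cocycle is $f(\varphi) = X_\alpha^* \otimes \sum_{D \in \Delta}[v_{\alpha*\gamma} \cdot v_{\lambda_D}]$. I would argue by contradiction: suppose $f(\varphi) = d\psi$ for some $\psi \in S^2V/V(\Delta^2)$ which we may take to be a $T_\ad$-weight vector of the same weight as $\varphi$. The coboundary condition reduces to $X_\alpha.\psi = [v_{\alpha*\gamma} \cdot v_\Delta]$ together with $X_\beta.\psi = 0$ for every positive root $\beta \neq \alpha$ of $\mathfrak g_{v_\Delta}$. These constraints on $\psi$ are exactly of the shape isolated in Proposition~\ref{vanishingconditions}, but now inside $S^2V/V(\Delta^2)$ rather than $V/\mathfrak g.v_\Delta$.

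The next step is to project $\psi$ onto each simple $\tilde G$-summand of $S^2V/V(\Delta^2)$. Every such summand is a simple module whose highest weight is a non-Cartan constituent of some $V(\lambda_D)\otimes V(\lambda_{D'})$, hence strictly less than $\lambda_D + \lambda_{D'}$. Applying Kostant's theorem (Theorem~\ref{Kostant}) to each summand, together with the support analysis of Proposition~\ref{supportofGamma} (for the weight $\alpha+\gamma$ of $\psi$) and the properties of the $\lambda_D$ collected in Lemma~\ref{properties} and Lemma~\ref{additionalppty}, restricts $\psi$ to a short finite list of possibilities. The case analysis would parallel that of Proposition~\ref{leftinclusion} and the proof of Theorem~\ref{LiealaKostant}, with the highest weights of the simple constituents of $S^2V/V(\Delta^2)$ playing the role previously played by the $\lambda_D$.

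The main obstacle is controlling the interaction with the Cartan summands $V(\lambda_D+\lambda_{D'})$ that have been quotiented out: one must verify that $[v_{\alpha*\gamma}\cdot v_\Delta]$ is not absorbed into the Cartan part, so that the image genuinely survives in $S^2V/V(\Delta^2)$. The key point is that $v_{\alpha*\gamma}$ has non-trivial $T_\ad$-weight $\alpha*\gamma$, so at least one component $v_{\alpha*\gamma}\cdot v_{\lambda_{D'}}$ sits strictly below $\lambda_D+\lambda_{D'}$ and survives the quotient. I expect the delicate cases to be those where $\gamma \in S$, so that the two representatives $v_{\alpha*\gamma}^{+}$ and $v_{\alpha*\gamma}^{-}$ must be handled simultaneously, and those where $v_{\alpha*\gamma}$ is spread across several $V(\lambda_D)$ as in Lemma~\ref{choiceofrepresentative}; in these cases combining the vanishing conditions on $\psi$ with the finite list of admissible pairs $(\alpha,\gamma)$ permitted by Theorem~\ref{LiealaKostant} should produce the required contradiction in each case.
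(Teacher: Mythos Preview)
Your route diverges from the paper's and carries an obstacle you have not addressed. You propose to transport the machinery of Appendix~A (Proposition~\ref{supportofGamma}, Proposition~\ref{vanishingconditions}, Kostant's theorem) to the module $S^2V/V(\Delta^2)$ by decomposing it into simple $\tilde G$-summands and running the same support analysis there. But that machinery rests on Lemma~\ref{properties}: the very special combinatorics of the weights $\lambda_D$ (for instance $(\lambda,\alpha^\vee)\le 2$, and the orthogonality constraints when equality holds). The highest weights of the non-Cartan constituents of $V(\lambda_D)\cdot V(\lambda_{D'})$ have no reason to satisfy these constraints, so neither Proposition~\ref{supportofGamma} nor the case analysis behind Proposition~\ref{vanishingconditions} carries over as stated. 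Saying the analysis ``parallels'' Proposition~\ref{leftinclusion} hides exactly this point; redoing it for all simple summands of $S^2V/V(\Delta^2)$ would be a substantially harder computation, not the same one in new clothing.

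The paper avoids this entirely by never analysing the putative primitive $\psi$ globally. Once $\varphi=X_\alpha^*\otimes[v_{\alpha*\gamma}]$ is fixed, it suffices to find a \emph{single} $D\in\Delta$ for which the component $[v_{\alpha*\gamma}\cdot v_{\lambda_D}]$ is nonzero in $S^2V/V(\Delta^2)$ and not in the image of $X_\alpha$. The test is pure $\mathfrak{sl}_2$-theory along the $\alpha$-string: it is enough that either
\[
\bigl[X_{-\alpha}\bigl(v_{\alpha*\gamma}\cdot v_{\lambda_D}\bigr)\bigr]=0 \quad\text{in } S^2V/V(\Delta^2)
\]
(condition~(\ref{1tobeproved})), or
\[
X_\alpha^{a}\,v_{\alpha*\gamma}\neq 0 \quad\text{in } V,\qquad a=(\lambda_D,\alpha^\vee)
\]
(condition~(\ref{2tobeproved})). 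Since $X_{-\alpha}v_{\alpha*\gamma}=0$ by the very definition of $v_{\alpha*\gamma}$, condition~(\ref{1tobeproved}) is automatic whenever $(\lambda_D,\alpha)=0$; when $(\lambda_D,\alpha)\neq 0$ the paper verifies condition~(\ref{2tobeproved}) through a short sequence of lemmas that use only Lemma~\ref{properties} and the explicit shape of $v_{\alpha*\gamma}$, splitting on the sign of $(\gamma,\alpha^\vee)$ and on whether $(\lambda_D,\gamma)$ vanishes. No decomposition of $S^2V/V(\Delta^2)$ into simples, and no analogue of Kostant's theorem for that module, is ever needed.
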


\begin{proof}
Let $\varphi$ be a $T_\ad$-weight vector in $H^1(\mathfrak g_{v_{\Delta}},V/\mathfrak g.v_{\Delta})$.
By Proposition~\ref{LiealaKostant}, there exist $\alpha$ simple and $\gamma\in \Sigma(\Delta)$ such
that
$$
\varphi=X_\alpha^*\otimes [v_{s_\alpha*\gamma}].
$$

Let $[v_{s_\alpha*\gamma}\cdot v_{\lambda_D}]$ be non-trivial in $\mathop S^2V/V(\Delta^2)$; \textsl{see} the above lemma.
We shall prove that there is no $v\in\mathop S^2V/V(\Delta^2)$
such that $[v_{s_\alpha*\gamma}\cdot v_{\lambda_D}]=X_\alpha v$ in $\mathop{S^2}V/V(\Delta^2)$.
Note that there is no such $v$ whenever one of the following assertions holds:
\begin{equation}\label{1tobeproved}
\left[X_{-\alpha}\bigl(v_{s_\alpha*\gamma}\cdot v_{\lambda_D}\bigr)\right]=0\quad\mbox{ in }\mathop{S^2}V/V(\Delta^2).
\end{equation}

\begin{equation}~\label{2tobeproved}
X_\alpha^a v_{s_\alpha*\gamma}\neq 0\mbox{ in $V$}\quad\mbox{ for } a=(\lambda_D,\alpha^\vee).
\end{equation}

Let us first consider $X_{-\alpha}\bigl(v_{s_\alpha*\gamma}\cdot v_{\lambda_D}\bigr)$.
Note that by definition, we have: $X_{-\alpha}v_{s_\alpha*\gamma}=0$ in $V$.
We thus have
$$
X_{-\alpha}\bigl(v_{s_\alpha*\gamma}\cdot v_{\lambda_D}\bigr)=v_{s_\alpha*\gamma}\cdot X_{-\alpha}v_{\lambda_D}.
$$

Remark that if $(\lambda_D,\alpha)=0$, Assertion~(\ref{1tobeproved}) obviously holds.
The proposition thus follows from the next lemmas.
\end{proof}

\begin{lemma}\label{lemma1injectivity}
Let $v_{s_\alpha*\gamma}=X_{-\alpha}^r v_{\lambda_D}$ for some $\lambda_D$.
Then Assertion~(\ref{2tobeproved}) holds for a weight $\lambda_{D'}$ such that $[v_{s_\alpha*\gamma}\cdot v_{\lambda_{D'}}]\neq 0$.
\end{lemma}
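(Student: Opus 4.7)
The plan is to reduce Assertion~(\ref{2tobeproved}) to a one-parameter $\mathfrak{sl}_2$-computation along the $\alpha$-string through the highest weight vector $v_{\lambda_D}$ inside $V(\lambda_D)$.

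First, given the hypothesis $v_{s_\alpha * \gamma} = X_{-\alpha}^r v_{\lambda_D}$, I would invoke the definition of $v_{\alpha * \gamma}$, in which $r = r(\alpha, \gamma)$ was chosen to be maximal with $X_{-\alpha}^r v_\gamma \neq 0$ in $V$. Since $V(\lambda_D)$ is an irreducible $G$-module and the $\mathfrak{sl}_2$-triple $(X_\alpha, X_{-\alpha}, H_\alpha)$ acts on $v_{\lambda_D}, X_{-\alpha} v_{\lambda_D}, \ldots, X_{-\alpha}^a v_{\lambda_D}$ as the standard $(a+1)$-dimensional irreducible module (where $a := (\lambda_D, \alpha^\vee)$), we have the nonvanishing on the whole string together with $X_{-\alpha}^{a+1} v_{\lambda_D} = 0$. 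Combining this with the maximality of $r$, one forces $r = a$.

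Having identified $r$ with $a$, I would close the argument by the standard commutator computation
\[
X_\alpha^a \bigl(X_{-\alpha}^a v_{\lambda_D}\bigr) = (a!)^2 \, v_{\lambda_D},
\]
obtained by iterating the identity $[X_\alpha, X_{-\alpha}^k] = k(a-k+1)\, X_{-\alpha}^{k-1}$ on the highest weight vector. Since the right-hand side is manifestly nonzero in $V(\lambda_D) \subset V$, this yields $X_\alpha^a v_{s_\alpha * \gamma} \neq 0$, which is exactly assertion~(\ref{2tobeproved}) for the selected $\lambda_D$.

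The main (rather mild) obstacle is to ensure that the exponent $r$ featured in the hypothesis is indeed the one coming from the maximality clause in the definition of $v_{\alpha * \gamma}$, so that the identification $r = a$ is legitimate; once this compatibility is verified, the argument takes place entirely inside the single simple summand $V(\lambda_D)$ of $V$, and no interplay with the other components $V(\lambda_{D'})$ enters.
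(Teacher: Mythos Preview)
Your argument is correct for the lemma as literally stated: once $r=(\lambda_D,\alpha^\vee)=a$ is established, the $\mathfrak{sl}_2$-identity $X_\alpha^a X_{-\alpha}^a v_{\lambda_D}=(a!)^2\,v_{\lambda_D}$ gives Assertion~(\ref{2tobeproved}) for that $\lambda_D$ in one line, with no case distinction.

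The paper proceeds differently, and it is worth understanding why. After noting $r=(\lambda_D,\alpha^\vee)$, the paper splits into $r>1$ and $r=1$. For $r>1$ it records in addition that $X_{-\alpha}^r v_{\lambda_D}\cdot v_{\lambda_D}\neq 0$ in $S^2V/V(\Delta^2)$; for $r=1$ it abandons the given $\lambda_D$ and passes to another $\lambda_{D'}\neq\lambda_D$ with $(\lambda_{D'},\alpha^\vee)=1$, supplied by Lemma~\ref{properties}. The reason is that the lemma is a step inside the proof of Proposition~\ref{triviality}, where one must exhibit a $\lambda_D$ for which \emph{both} $[v_{s_\alpha*\gamma}\cdot v_{\lambda_D}]\neq 0$ in $S^2V/V(\Delta^2)$ \emph{and} one of Assertions~(\ref{1tobeproved}),~(\ref{2tobeproved}) holds. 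When $r=1$ the product $X_{-\alpha}v_{\lambda_D}\cdot v_{\lambda_D}$ already lies in the Cartan component $V(2\lambda_D)$, so $[v_{s_\alpha*\gamma}\cdot v_{\lambda_D}]=0$ and the original $\lambda_D$ is useless for the proposition even though your Assertion~(\ref{2tobeproved}) holds for it. The paper's switch to $\lambda_{D'}$ restores the non-vanishing (now in $V(\lambda_D)\otimes V(\lambda_{D'})/V(\lambda_D+\lambda_{D'})$) while keeping $a=(\lambda_{D'},\alpha^\vee)=1$, so that $X_\alpha v_{s_\alpha*\gamma}=X_\alpha X_{-\alpha}v_{\lambda_D}\neq 0$ still gives Assertion~(\ref{2tobeproved}).

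In short: your proof establishes the stated lemma more directly than the paper does, but the paper's case distinction is doing extra work that the enclosing proposition needs and that the lemma statement does not advertise. If you are only asked to prove the lemma as written, your argument suffices; if you are reconstructing the proof of Proposition~\ref{triviality}, you will still have to deal with the $r=1$ non-vanishing issue separately.
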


\begin{proof}
Note that $r=(\lambda_D,\alpha^\vee)$.
Hence if $r>1$ then $[X_{-\alpha}^r v_{\lambda_D}\cdot v_{\lambda_D}]\neq 0$ and Assertion~(\ref{2tobeproved}) is clear whence the lemma with $D'=D$ itself.
If $r=1$ then necessarily $\alpha\in\Sigma$ and there exists $\lambda_{D'}\neq \lambda_D$ non-orthogonal to $\alpha$.
Thanks to Lemma~\ref{properties}, $(\lambda_{D'},\alpha^\vee)=1$ and Assertion~(\ref{2tobeproved}) holds with $\lambda_{D'}$.
\end{proof}

\begin{lemma}
If $(\gamma,\alpha)<0$ then Assertion~(\ref{2tobeproved}) holds for every $\lambda_D\in\Delta$.
\end{lemma}

\begin{proof}
Recall that $\gamma$ belongs to $\mathbb Z\Delta$ (Theorem~\ref{LiealaKostant}).
Together with Lemma~\ref{properties}, this implies that $(\lambda-\gamma,\alpha^\vee)\geq (\lambda',\alpha^\vee)$ for every $\lambda,\lambda'\in\Delta$.
The lemma follows readily.
\end{proof}

\begin{lemma}\label{lemma2injectivity}
Let $[v_{s_\alpha*\gamma}\cdot v_{\lambda_D}]\neq 0$ in $\mathop S^2V/V(\Delta^2)$ with $\gamma\in \Sigma(\Delta)$.
If $\lambda_D$ is orthogonal to $\gamma$  then Assertion~(\ref{2tobeproved}) holds.
\end{lemma}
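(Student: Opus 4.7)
The plan is to reduce Assertion~(\ref{2tobeproved}) to a statement about one distinguished $V(\lambda')$-component of $v_{\alpha*\gamma}$. First, since $\lambda_D$ is orthogonal to $\gamma$, the weight $\lambda_D-\gamma$ is not a weight of $V(\lambda_D)$, so the $V(\lambda_D)$-component of any representative $v_\gamma\in\oplus_{\lambda'\in\Delta}V(\lambda')_{\lambda'-\gamma}$ vanishes, and applying $X_{-\alpha}^r$ preserves this vanishing. One therefore writes
$$
v_{\alpha*\gamma}=\sum_{\lambda'\in\Delta\setminus\{\lambda_D\}}v_{\alpha*\gamma}^{\lambda'},
$$
and the hypothesis $[v_{\alpha*\gamma}\cdot v_{\lambda_D}]\neq 0$ supplies some $\lambda'\neq\lambda_D$ with $v_{\alpha*\gamma}^{\lambda'}\cdot v_{\lambda_D}\notin V(\lambda'+\lambda_D)$. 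Since $\lambda_D$ is non-orthogonal to $\alpha$ with $a=(\lambda_D,\alpha^\vee)\in\{1,2\}$, Lemma~\ref{properties} strongly restricts the pairing $(\lambda',\alpha^\vee)$: if $a=2$, every $\lambda'\neq \lambda_D$ is orthogonal to $\alpha$, and if $a=1$, at most one such $\lambda'$ pairs non-trivially with $\alpha$.

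The key step is then to prove the following contrapositive claim: the condition $v_{\alpha*\gamma}^{\lambda'}\cdot v_{\lambda_D}\notin V(\lambda'+\lambda_D)$ forces $X_\alpha^a v_{\alpha*\gamma}^{\lambda'}\neq 0$ in $V(\lambda')$, from which Assertion~(\ref{2tobeproved}) immediately follows. Assuming $X_\alpha^a v_{\alpha*\gamma}^{\lambda'}=0$, I would use the $U(\tilde{\mathfrak g}^-)$-description of $V(\lambda'+\lambda_D)$ as the submodule of $V(\lambda')\otimes V(\lambda_D)$ generated by $v_{\lambda'}\otimes v_{\lambda_D}$, together with the observation that, in the relevant weight range, $\alpha$ is essentially the only simple root for which $X_{-\alpha}v_{\lambda_D}$ does not vanish, to realize $v_{\alpha*\gamma}^{\lambda'}\otimes v_{\lambda_D}$ as an $X_{-\alpha}^a$-image of a vector in $V(\lambda'+\lambda_D)$. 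The two built-in constraints on $v_{\alpha*\gamma}$, namely its $\tilde G_{v_\Delta}$-invariance in the quotient and the maximality $X_{-\alpha}v_{\alpha*\gamma}=0$ of the exponent $r$, are essential to neutralize the cross terms arising in this identification.

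The hard part will be executing this tensor product calculation cleanly: a bare vanishing $X_\alpha^a w=0$ of a weight vector $w\in V(\lambda')$ does not in general place $w\cdot v_{\lambda_D}$ in the Cartan component $V(\lambda'+\lambda_D)$ (small $\mathrm{SL}_3$ examples already show this), so the specific structural features of $v_{\alpha*\gamma}$ coming from Theorem~\ref{LiealaKostant} and the spherical system axioms must really be used. The argument will further split into the cases $a=1$ and $a=2$, the latter (with $\lambda_D=2\omega_\alpha$ and every other element of $\Delta$ orthogonal to $\alpha$) being the cleaner one, and the former requiring extra bookkeeping when a second element of $\Delta$ is non-orthogonal to $\alpha$.
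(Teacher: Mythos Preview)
Your proposal is a plan rather than a proof: the central claim---that $v_{\alpha*\gamma}^{\lambda'}\cdot v_{\lambda_D}\notin V(\lambda'+\lambda_D)$ forces $X_\alpha^a v_{\alpha*\gamma}^{\lambda'}\neq 0$---is never established. You yourself flag that a bare vanishing $X_\alpha^a w=0$ does not place $w\cdot v_{\lambda_D}$ in the Cartan component, and then appeal to unspecified ``structural features'' of $v_{\alpha*\gamma}$ to close the gap. That is exactly the missing idea; until that tensor product calculation is carried out (and it is genuinely delicate, as your $\mathrm{SL}_3$ remark shows), nothing has been proved.

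The paper avoids this computation entirely by working with the $\mathfrak{sl}_2(\alpha)$-string of $v_\gamma$. The first move is to observe that $\alpha\notin\supp(\gamma)$: since $(\lambda_D,\gamma)=0$ and $[v_{\alpha*\gamma}\cdot v_{\lambda_D}]\neq 0$, one must have $(\lambda_D,\alpha)\neq 0$, and this (together with the shape of spherical roots) excludes $\alpha$ from $\supp(\gamma)$. Hence $(\gamma,\alpha^\vee)\le 0$ and $X_\alpha v_\gamma=0$, so $v_{\alpha*\gamma}=X_{-\alpha}^r v_\gamma$ with $r=(\lambda-\gamma,\alpha^\vee)$ for the $\lambda\in\Delta$ carrying $v_\gamma$; here $\lambda\neq\lambda_D$ since $(\lambda,\gamma)\neq 0$. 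The paper then rules out $(\gamma,\alpha)=0$: in that case $\alpha$ would be orthogonal to every simple root of $\supp(\gamma)$, and picking $\delta\in\supp(\gamma)$ with $X_\delta v_\gamma\neq 0$ contradicts $(\lambda,\alpha)\neq 0$ via Lemma~\ref{properties}. So $(\gamma,\alpha^\vee)<0$; combined with $\gamma\in\mathbb Z\Delta$ and Lemma~\ref{properties} (which forces $(\gamma,\alpha^\vee)$ to be even when $a=2$), one gets $r=(\lambda,\alpha^\vee)-(\gamma,\alpha^\vee)\ge(\lambda_D,\alpha^\vee)=a$. Elementary $\mathfrak{sl}_2$ then yields $X_\alpha^a X_{-\alpha}^r v_\gamma\neq 0$, i.e.\ Assertion~(\ref{2tobeproved}).

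In short, the key observation you are missing is that $\alpha$ lies outside $\supp(\gamma)$; once you have it, the question reduces to comparing the length $r$ of an $\alpha$-string with $a$, and no tensor product analysis is needed.
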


\begin{proof}
Note first that the support of $\gamma$ does not contain $\alpha$.
Indeed $\lambda_D$ being orthogonal to $\gamma$ it can not be orthogonal to $\alpha$ otherwise $[v_{s_\alpha*\gamma}\cdot v_{\lambda_D}]$ will be $0$.
Hence $(\gamma,\alpha)\leq 0$ and further $X_\alpha v_\gamma=0$ in $V$.
By Proposition~\ref{vanishingconditions} together with Definition~\ref{Kostantvector}, we get that $v_{s_\alpha*\gamma}=X_{-\alpha}^r v_\gamma$ with $r=(\lambda-\gamma,\alpha^\vee)$ and $v_\gamma\in V(\lambda)$.
Further, since $[v_{s_\alpha*\gamma}.v_{\lambda_D}]\neq 0$ and $(\lambda_D,\gamma)=0$, we have:
$v_{s_\alpha*\gamma}\neq v_\gamma$.

The weight $\lambda$ being non-orthogonal to $\gamma$, it is different from $\lambda_D$.

Assume that $(\gamma,\alpha)=0$ then since $\alpha$ does not belong to the support of $\gamma$,
it has to be orthogonal to every simple root lying in the support of $\gamma$.
Let $\delta\in\supp\gamma$ be such that $X_\delta v_\gamma\neq 0$ (in $V$).
Then $X_\delta v_\gamma=X_{-\gamma+\delta}v_\lambda\in\mathfrak g.v_{\Delta}$.
Moreover, since $v_{s_\alpha*\gamma}\neq v_\gamma$, we have $(\lambda,\alpha)\neq 0$.
It follows from Lemma~\ref{obviousppties} that $\alpha\in\Sigma\cap S$ and $\gamma\in S$.
By Lemma~\ref{properties}-(4), we end up with a contradiction.
We deduce that $(\gamma,\alpha)<0$ - case worked out in the previous lemma.
\end{proof}

\begin{lemma}
Suppose $(\gamma,\alpha^\vee)>1$ with $\gamma\in \Sigma(\Delta)$.
Then $[v_{s_\alpha*\gamma}\cdot v_{\lambda_D}]$ and Assertion~(\ref{2tobeproved}) holds with $\lambda_D$ such that $v_\gamma\in V(\lambda_D)$.
\end{lemma}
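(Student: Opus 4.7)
The proof follows the $\mathfrak{sl}_2$-strategy used in the two preceding lemmas. Set $\mathfrak s_\alpha=\langle X_\alpha,X_{-\alpha},H_\alpha\rangle$ and decompose the simple summand $V(\lambda_D)$ into its irreducible $\mathfrak s_\alpha$-submodules. Writing $v_\gamma=\sum_i u_i$ with $u_i$ in an $\mathfrak s_\alpha$-piece $M_i$ of highest weight $d_i$, the $H_\alpha$-eigenvalue $h=a-(\gamma,\alpha^\vee)$ of $v_\gamma$ (where $a=(\lambda_D,\alpha^\vee)$) forces $d_i\geq|h|$ and $d_i\equiv h\pmod 2$. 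With $d=\max\{d_i:u_i\neq 0\}$, the maximality of $r$ in $X_{-\alpha}^r v_\gamma\neq 0$ gives $r=(d+h)/2$, so $v_{s_\alpha*\gamma}=X_{-\alpha}^r v_\gamma$ lives in the bottom weight space $-d$ of the summands with $d_i=d$; a standard $\mathfrak{sl}_2$-computation then shows that $X_\alpha^a v_{s_\alpha*\gamma}\neq 0$ in $V$ if and only if $d\geq a$.

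Hence the lemma reduces to proving $d\geq a$ under the hypothesis $(\gamma,\alpha^\vee)>1$. Lemma~\ref{properties}(1) gives $a\in\{1,2\}$. When $a=1$ the bound $d\geq|1-(\gamma,\alpha^\vee)|\geq 1=a$ is automatic, and when $a=2$ the same bound already settles the case $(\gamma,\alpha^\vee)\geq 4$. The only residual case is therefore $a=2$ with $(\gamma,\alpha^\vee)\in\{2,3\}$.

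The main obstacle lies in this residual case. Lemma~\ref{properties}(2) forces $\lambda_D=2\omega_\alpha$ and $2\alpha\in\Sigma$, and axiom $(\Sigma 1)$ requires $(\sigma,\alpha^\vee)\in 2\mathbb Z_{\leq 0}$ for every $\sigma\in\Sigma\setminus\{2\alpha\}$. Combined with Proposition~\ref{leftinclusion}, which confines elements of $\Sigma(\Delta)\setminus\Sigma$ to certain sums of adjacent simple roots lying in $\Sigma$, these restrictions leave only a very short list of admissible $\gamma$. The plan is to go through this list using Definition~\ref{sphericalroots}, and in each case to exhibit, with the representative $v_\gamma$ chosen as in Lemma~\ref{choiceofrepresentative}, an $\mathfrak s_\alpha$-submodule of $V(2\omega_\alpha)$ of dimension $3$ in which $v_\gamma$ has a non-trivial component, securing $d=2=a$ and completing the proof.
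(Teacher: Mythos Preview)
Your $\mathfrak{sl}_2$-reduction is sound and the inequality $d\geq a$ is indeed equivalent to Assertion~(\ref{2tobeproved}); the cases $a=1$ and $a=2$ with $(\gamma,\alpha^\vee)\geq 4$ are correctly disposed of. The gap is that you leave the residual case $a=2$, $(\gamma,\alpha^\vee)\in\{2,3\}$ as an unexecuted plan, when in fact your own ingredients already show this case is \emph{empty}. Indeed, $a=2$ forces $2\alpha\in\Sigma$, whence $\alpha\notin\Sigma$ by axiom~$(\Sigma1)$ applied to $\sigma=\alpha$. Then $(\Sigma1)$ gives $(\sigma,\alpha^\vee)\leq 0$ for every $\sigma\in\Sigma\setminus\{2\alpha\}$, and the extra elements of $\Sigma(\Delta)$ are sums $\beta+\beta'$ of adjacent simple roots with $\beta,\beta'\in S\cap\Sigma$, hence $\beta,\beta'\neq\alpha$ and again $(\beta+\beta',\alpha^\vee)\leq 0$. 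So the only $\gamma\in\Sigma(\Delta)$ with $(\gamma,\alpha^\vee)>1$ and $a=2$ is $\gamma=2\alpha$, which falls under $(\gamma,\alpha^\vee)=4$ and is already settled. No case-by-case inspection of $V(2\omega_\alpha)$ is needed.

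The paper's proof reaches the same endpoint by a shorter route that bypasses the $\mathfrak{sl}_2$-calculus entirely. It separates off $\gamma=2\alpha$ (handled by Lemma~\ref{lemma1injectivity}), then observes directly from the list of spherical roots that any remaining $\gamma$ has $(\gamma,\alpha^\vee)=2$ and that $\gamma-\delta$ is a root for a simple $\delta$ only when $\delta=\alpha$; this forces $X_\alpha v_\gamma\neq 0$ in $V$. Since $\gamma\neq 2\alpha$ rules out $2\alpha\in\Sigma$ (by the same $(\Sigma1)$ argument), Lemma~\ref{properties} gives $(\lambda_D,\alpha^\vee)=1$, and Assertion~(\ref{2tobeproved}) with $a=1$ is then immediate. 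Your approach trades this inspection of $\Sigma(\Delta)$ for a uniform $\mathfrak{sl}_2$ bound; both work, but you should complete yours by noting that the residual case is vacuous rather than deferring it to an unspecified check.
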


\begin{proof}
If $\gamma=2\alpha$, we fall in the setting of Lemma~\ref{lemma1injectivity}.
Suppose thus that $\gamma\neq 2\alpha$.

In light of the description of $\Sigma(\Delta)$ (Proposition~\ref{descriptionofweights}), the weight $\gamma\in\Sigma(\Delta)$ under consideration is such that $(\gamma,\alpha^\vee)=2$ and $\gamma\in\Sigma$.
From the list of spherical roots together with Lemma~\ref{weightvector},
one can choose $v_\gamma$ in some $V(\lambda_D)$ of first type and $(\lambda_D,\alpha^\vee)=1$ necessarily.
\end{proof}

\begin{lemma}
Let $\gamma\in \Sigma(\Delta)$ and $(\gamma,\alpha^\vee)=1$ for some simple root $\alpha$.
Then Assertion~(\ref{2tobeproved}) holds.
\end{lemma}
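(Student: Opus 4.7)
The plan is to exhibit a weight $\lambda_D\in\Delta$ satisfying simultaneously $[v_{\alpha*\gamma}\cdot v_{\lambda_D}]\neq 0$ in $\mathop{S^2}V/V(\Delta^2)$ and Assertion~(\ref{2tobeproved}). By Lemma~\ref{lemma2injectivity}, I may restrict to weights $\lambda_D$ that are not orthogonal to $\gamma$; by Lemma~\ref{properties}, the integer $a:=(\lambda_D,\alpha^\vee)$ then lies in $\{0,1,2\}$.

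The argument will split along this value. For $a=0$, Assertion~(\ref{2tobeproved}) holds trivially, so it would suffice to verify the non-vanishing in $\mathop{S^2}V/V(\Delta^2)$ using the structural properties of $\Delta$ from Subsection~\ref{propertiesofthemonoid}. For $a=2$, Lemma~\ref{properties}(2) forces $\lambda_D=2\omega_\alpha$ together with every other weight in $\Delta$ being orthogonal to $\alpha$; this reduces the situation to that of Lemma~\ref{lemma1injectivity}, since $v_\gamma$ then lies inside the single simple summand $V(2\omega_\alpha)$.

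The principal case is $a=1$. I would choose $\lambda_D$ so that the projection of $v_\gamma$ onto $V(\lambda_D)_{\lambda_D-\gamma}$ is non-trivial. Since $(\lambda_D-\gamma,\alpha^\vee)=1-1=0$, the weight vector $v_\gamma$ sits at $\mathfrak{sl}_2(\alpha)$-weight $0$, and the maximality of $r$ in $v_{\alpha*\gamma}=X_{-\alpha}^r v_\gamma$ places $X_{-\alpha}^r v_\gamma$ at the bottom of the longest $\mathfrak{sl}_2(\alpha)$-string through $v_\gamma$ within $V(\lambda_D)$. Invoking the standard $\mathfrak{sl}_2$ commutation formula
$$
X_\alpha X_{-\alpha}^r=X_{-\alpha}^r X_\alpha+rX_{-\alpha}^{r-1}(H_\alpha-r+1),
$$
evaluated on $v_\gamma$, then yields that $X_\alpha v_{\alpha*\gamma}$ has a nonzero contribution proportional to $X_{-\alpha}^{r-1}v_\gamma$, establishing Assertion~(\ref{2tobeproved}).

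The hard part, as in the preceding lemmas, is guaranteeing that a $\lambda_D$ chosen as above also makes $[v_{\alpha*\gamma}\cdot v_{\lambda_D}]$ non-trivial in $\mathop{S^2}V/V(\Delta^2)$: this is a weight-level condition asserting that the weight $2\lambda_D-\gamma-r\alpha$ (or $\lambda_D+\lambda_{D'}-\gamma-r\alpha$ in the cross-term case) produces a vector not captured entirely by the Cartan summand. I expect this to proceed by an explicit case analysis driven by the refined description of $\Sigma(\Delta)$ from Proposition~\ref{leftinclusion}, the classification of spherical roots from Definition~\ref{sphericalroots}, and the properties of $\Delta$ collected in Subsection~\ref{propertiesofthemonoid}, with the $a=0$ case serving as a fallback when no $\lambda_D$ with $a=1$ admits the required non-vanishing.
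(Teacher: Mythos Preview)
Your $\mathfrak{sl}_2$ argument in the principal case $a=1$ has a genuine gap. With $(\lambda_D-\gamma,\alpha^\vee)=0$, the vector $v_\gamma$ sits at $H_\alpha$-weight $0$; but then the commutation formula gives
\[
X_\alpha X_{-\alpha}^r v_\gamma \;=\; X_{-\alpha}^r X_\alpha v_\gamma + r(1-r)\,X_{-\alpha}^{r-1}v_\gamma,
\]
so the ``nonzero contribution proportional to $X_{-\alpha}^{r-1}v_\gamma$'' you invoke has coefficient $r(1-r)$, which vanishes for $r\in\{0,1\}$. For $r\geq 1$ one can still conclude, since $X_{-\alpha}^r v_\gamma$ is then a nonzero lowest weight vector of $H_\alpha$-weight $-2r<0$ and hence is not killed by $X_\alpha$. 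The case $r=0$, however, is exactly the case $X_{-\alpha}v_\gamma=0$, which at weight $0$ forces $v_\gamma$ into a trivial $\mathfrak{sl}_2(\alpha)$-summand, so $X_\alpha v_\gamma=0$ too and Assertion~(\ref{2tobeproved}) fails outright for this choice of representative. Your outline does not address this possibility, and it does occur.

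The paper's proof is organized around precisely this difficulty. It first observes that $(\gamma,\alpha^\vee)=1$ forces $\gamma-\alpha$ to be a root (Definition~\ref{sphericalroots}), and that Lemma~\ref{properties} provides $\lambda\in\Delta$ with $(\lambda,\alpha^\vee)=1$ and $v_\gamma\in V(\lambda)$. If $X_\alpha v_\gamma=0$ for this representative, then by Lemma~\ref{weightvector} and~\cite{J} one has $[v_\gamma]=[X_{-\gamma}v_\lambda]=[X_{-\gamma}v_{\lambda_j}]$ for some $\lambda_j\neq\lambda$ with $(\lambda_j,\gamma-\alpha)\neq 0$, and for \emph{this} alternative representative one does get $X_\alpha v_\gamma\neq 0$. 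The key step you are missing is thus the freedom to change the representative of $[v_\gamma]$ (as in Lemma~\ref{choiceofrepresentative}) so as to ensure $X_\alpha v_\gamma\neq 0$, after which Assertion~(\ref{2tobeproved}) follows with $a=(\lambda,\alpha^\vee)=1$. Your proposed case split on $a\in\{0,1,2\}$ and the appeal to Lemmas~\ref{lemma1injectivity}--\ref{lemma2injectivity} are reasonable scaffolding, but the core of the $a=1$ case cannot be finished by the commutation identity alone.
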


\begin{proof}
By Proposition~\ref{descriptionofweights} and the table of spherical roots, $\gamma-\alpha$ is a root.
By Lemma~\ref{properties}, there exists $\lambda\in\Delta$ non-orthogonal to $\alpha$ and $(\lambda,\alpha^\vee)=1$.
Further $v_\gamma$ can be chosen in $V(\lambda)$.
If $X_\alpha v_\gamma=0$ in $V$, it follows from ~\cite{Js} and Lemma~\ref{weightvector} that $[v_\gamma]=[X_{-\gamma} v_{\lambda}]=[X_{-\gamma} v_{\lambda'}]$
for some $\lambda'\neq \lambda$ and such that $(\lambda',\gamma-\alpha)\neq 0$.
In particular $X_\alpha v_\gamma\neq 0$ in $V$ for $v_\gamma=X_{-\gamma}v_{\lambda'}$.
Then $v_\gamma$ can be chosen such that $v_\gamma\in V(\nu)$ with  $X_\alpha v_\gamma\neq 0$ in $V$ and $\nu=\lambda$ or $\lambda'$ as above.
Assertion~(\ref{2tobeproved}) thus holds with $\lambda_D=\lambda$.
\end {proof}

\begin{lemma}
Let $\alpha$ be a simple root not in $S^p$.
Suppose $(\gamma,\alpha)=0$ then one of the assertions ~(\ref{1tobeproved}) and (\ref{2tobeproved}) holds.
\end{lemma}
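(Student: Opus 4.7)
My plan is to compute $X_{-\alpha}\bigl(v_{s_\alpha * \gamma} \cdot v_{\lambda_D}\bigr)$ directly via the Leibniz rule, then to split into two sub-cases according to whether a suitable $\lambda_D$ orthogonal to $\alpha$ can be chosen. Observe that whenever $\gamma \in \Sigma$, the maximality of $r$ in the definition $v_{s_\alpha * \gamma} = X_{-\alpha}^r v_\gamma$ forces $X_{-\alpha} v_{s_\alpha * \gamma} = X_{-\alpha}^{r+1} v_\gamma = 0$ in $V$, so that
$$X_{-\alpha}\bigl(v_{s_\alpha * \gamma} \cdot v_{\lambda_D}\bigr) = v_{s_\alpha * \gamma} \cdot X_{-\alpha} v_{\lambda_D}$$
in $S^2V$, and a fortiori in $S^2V / V(\Delta^2)$.

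If a $\lambda_D$ orthogonal to $\alpha$ can be chosen with $[v_{s_\alpha * \gamma} \cdot v_{\lambda_D}] \neq 0$, then $X_{-\alpha} v_{\lambda_D} = 0$ (since $v_{\lambda_D}$ is the highest weight vector of $V(\lambda_D)$ and $(\lambda_D, \alpha^\vee) = 0$), and Assertion~(\ref{1tobeproved}) holds at once for this $\lambda_D$. In the contrary case, every $\lambda_D$ yielding $[v_{s_\alpha * \gamma} \cdot v_{\lambda_D}] \neq 0$ must satisfy $(\lambda_D, \alpha^\vee) \geq 1$; since $\alpha \notin S^p$, Lemma~\ref{properties} guarantees that such weights exist in $\Delta$, with $a := (\lambda_D, \alpha^\vee) \in \{1, 2\}$. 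I would then establish Assertion~(\ref{2tobeproved}) via $\mathfrak{sl}_2$-theory applied to the root subalgebra generated by $\alpha$: decomposing $v_\gamma = \sum_\lambda v_\gamma^{(\lambda)}$ and using $(\gamma, \alpha) = 0$ to obtain $(\lambda - \gamma, \alpha^\vee) = (\lambda, \alpha^\vee)$, the position of $v_\gamma^{(\lambda)}$ in its $\alpha$-string is governed by the relation $q_\lambda - p_\lambda = (\lambda, \alpha^\vee)$, where $p_\lambda$ and $q_\lambda$ denote the numbers of available $X_\alpha$-raising and $X_{-\alpha}$-lowering steps from $v_\gamma^{(\lambda)}$. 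A standard $\mathfrak{sl}_2$-identity then gives $X_\alpha^a X_{-\alpha}^r v_\gamma^{(\lambda)}$ as a positive scalar multiple of $X_{-\alpha}^{r-a} v_\gamma^{(\lambda)}$ provided $a \leq 2r - (\lambda, \alpha^\vee)$.

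The main obstacle, I expect, is to locate among the components $\lambda$ actually contributing to $v_{s_\alpha * \gamma}$ (namely those with $q_\lambda = r$) one for which $a \leq 2r - (\lambda, \alpha^\vee)$ \emph{and} whose contribution is not cancelled by the other components. This forces a case-by-case inspection according to the type of the support of $\gamma$ among the spherical roots listed in Definition~\ref{sphericalroots}, combining the bound $(\lambda, \alpha^\vee) \leq 2$ from Lemma~\ref{properties} with the explicit description of $\Sigma(\Delta)$ established earlier. The most delicate situation is $r = 0$, which covers $\gamma \in \Sigma(\Delta) \setminus \Sigma$ (where $v_{s_\alpha * \gamma} = v_\gamma$ and the identity $X_{-\alpha} v_{s_\alpha * \gamma} = 0$ no longer holds for free): here Assertion~(\ref{2tobeproved}) may fail, and one must instead return to Assertion~(\ref{1tobeproved}) with a more careful choice of $\lambda_D$, whose existence again rests on Lemma~\ref{properties} and the constraints imposed on $\text{supp}(\gamma)$ by Proposition~\ref{supportofGamma}.
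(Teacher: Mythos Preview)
Your Leibniz-rule opening and the observation $X_{-\alpha}v_{s_\alpha*\gamma}=0$ (for $\gamma\in\Sigma$) match the paper's setup in Proposition~\ref{triviality}, and your dichotomy ``can one choose $\lambda_D\perp\alpha$ with nonzero product, or not?'' is a reasonable entry point. But the paper organizes the argument along a different axis, and this is where your sketch loses traction.

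The paper splits by whether $\alpha$ lies in $\supp(\gamma)$. When $\alpha\notin\supp(\gamma)$, the hypothesis $(\gamma,\alpha)=0$ forces $\alpha$ to be orthogonal to \emph{every} simple root in $\supp(\gamma)$ (since all off-diagonal inner products $(\beta,\alpha)$ are $\le 0$). This makes $X_\alpha$ commute past everything used to build $v_\gamma$, so $X_\alpha v_\gamma=0$; one then reads off $r=(\lambda,\alpha^\vee)$ and Assertion~(\ref{2tobeproved}) falls out directly for any component $\lambda$ with $(\lambda,\alpha)\neq 0$, while Assertion~(\ref{1tobeproved}) is immediate for the components with $(\lambda,\alpha)=0$. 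Your $\mathfrak{sl}_2$-string argument would reach the same conclusion here, but you never isolate this orthogonality, so you are left anticipating a full case-by-case over all spherical-root types that is in fact unnecessary.

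The genuine difficulty is the case $\alpha\in\supp(\gamma)$, and here your plan is underdeveloped. The paper invokes Lemma~\ref{rootsupport}: since $(\gamma,\alpha)=0$ and $\alpha\in\supp(\gamma)$, the weight $\gamma-\alpha$ must be a root, which (after ruling out $\mathsf F_4$) pins $\gamma$ down to a root of type $\mathsf B_n$ or $\mathsf C_n$. In both types one can take $v_\gamma\in V(\lambda)$ with $(\lambda,\alpha)=0$, and the paper then proves Assertion~(\ref{1tobeproved}) by an explicit transfer identity of the form $[X_{-\gamma-\alpha}v_\lambda\cdot v_{\lambda_i}]=[v_\lambda\cdot X_{-\gamma-\alpha}v_{\lambda_i}]$ in $S^2V/V(\Delta^2)$, using a second weight $\lambda_i$ non-orthogonal to $\gamma$. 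Your outline instead aims for Assertion~(\ref{2tobeproved}) via the inequality $a\le 2r-(\lambda,\alpha^\vee)$, but you neither identify which $\gamma$ can occur nor verify this inequality; and since the paper's actual route in this case goes through Assertion~(\ref{1tobeproved}) with a carefully chosen auxiliary $\lambda_i$, it is not clear your $\mathfrak{sl}_2$ bound would suffice without that same reduction. The missing ingredient is precisely the use of Lemma~\ref{rootsupport} to collapse the ``$\alpha\in\supp(\gamma)$'' case to two explicit root types.
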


\begin{proof}
First assume that $\alpha$ does not belong to the support of $\gamma$.
Then $\alpha$ is orthogonal to every simple root in the support of $\gamma$.
It follows that $v_{s_\alpha*\gamma}^\lambda=v_\gamma^\lambda$ in $V$ if and only if $(\lambda,\alpha)=0$.
If the $\lambda$-component $v_\gamma^\lambda$ is such that $v_\gamma^\lambda\cdot v_\lambda\neq 0$ then Assertion~(\ref{1tobeproved}) holds whenever $(\lambda,\alpha)=0$.
If $v_\gamma^\lambda\cdot v_\lambda=0$ then $v_\gamma^\lambda=X_{-\gamma}v_\lambda$ and there exists $\lambda'\neq \lambda$ such that
$0\neq v_\gamma^\lambda\cdot v_{\lambda'}=v_\lambda\cdot X_{-\gamma}v_{\lambda'}$ hence Assertion~(\ref{1tobeproved}) whenever $(\lambda,\alpha)=0$.
Let now $(\lambda,\alpha)\neq 0$. Note that $X_\alpha v_\gamma=0$ in $V$ since $\alpha$ does not belong to the support of $\gamma$.
Then $v_{s_\alpha*\gamma}^\lambda= X_{-\alpha}^r v_\gamma^\lambda$ with $r=(\lambda-\gamma,\alpha^\vee)=(\lambda,\alpha^\vee)$.
Further $\gamma+\alpha$ is not a root therefore $v_{s_\alpha*\gamma}^\lambda\not\in\mathfrak g.v_{\lambda}$.
Assertion~(\ref{2tobeproved}) thus holds with $\lambda$ such that the $\lambda$-component of $v_\gamma\in \oplus_\lambda V(\lambda)_{\lambda-\gamma}$ is not trivial.

Assume now that $\alpha$ lies in the support of $\gamma$.
Then by Lemma~\ref{rootsupport}, $\gamma-\alpha$ has to be a root; the type $\mathsf F_4$ is easily ruled out.
More precisely $\gamma$ is a root of type $\mathsf B_n$ or $\mathsf C_n$.
Further in type $\mathsf B_n$, we can choose $v_\gamma= X_{-\gamma}v_\lambda$
whereas $v_\gamma\in V(\lambda)\setminus \mathfrak g.v_\lambda$ in type $\mathsf C_n$ along with $(\lambda,\alpha)=0$ in both cases.
In the first situation, $v_{s_\alpha*\gamma}=X_{-\gamma-\alpha}v_\lambda$ and there exists $\lambda'\neq \lambda$ non-orthogonal to $\gamma$.
In type $\mathsf B_n$, we then have $0\neq v_{s_\alpha*\gamma}\cdot v_{\lambda'}=X_{-\gamma-\alpha}v_{\lambda'}\cdot v_\lambda$ whence
Assertion~(\ref{1tobeproved}).
In type $\mathsf C_n$, Assertion~(\ref{1tobeproved}) holds with $\lambda'=\lambda$.
\end{proof}

\end{document}